\newtheorem{theorem}{Theorem}[section]
\theoremstyle{plain}
\newtheorem{axiom}{Axiom}
\newtheorem{claim}{Claim}
\newtheorem{conjecture}[theorem]{Conjecture}
\newtheorem{corollary}[theorem]{Corollary}
\newtheorem{definition}[theorem]{Definition}
\newtheorem{example}[theorem]{Example}
\newtheorem{exercise}{Exercise}
\newtheorem{lemma}[theorem]{Lemma}
\newtheorem{proposition}[theorem]{Proposition}
\newtheorem{remark}[theorem]{Remark}
\numberwithin{equation}{section}
\numberwithin{figure}{section}
\let\pdfoutput=\undefined\fi
\chardef\@x10\chardef\@xv60
\def\tcitime{
\def\@time{%
  \@minute\time\@hour\@minute\divide\@hour\@xv
  \ifnum\@hour<\@x 0\fi\the\@hour:%
  \multiply\@hour\@xv\advance\@minute-\@hour
  \ifnum\@minute<\@x 0\fi\the\@minute
  }}%
\def\x@hyperref#1#2#3{%
   \catcode`\~ = 12
   \catcode`\$ = 12
   \catcode`\_ = 12
   \catcode`\# = 12
   \catcode`\& = 12
   \y@hyperref{#1}{#2}{#3}%
}
\def\y@hyperref#1#2#3#4{%
   #2\ref{#4}#3
   \catcode`\~ = 13
   \catcode`\$ = 3
   \catcode`\_ = 8
   \catcode`\# = 6
   \catcode`\& = 4
}
\def\QCTOpt[#1]#2{%
  \def\QCTOptB{#1}
  \def\QCTOptA{#2}
}
\def\QCTNOpt#1{%
  \def\QCTOptA{#1}
  \let\QCTOptB\empty
}
\def\Qct{%
  \@ifnextchar[{%
    \QCTOpt}{\QCTNOpt}
}
\def\QCBOpt[#1]#2{%
  \def\QCBOptB{#1}%
  \def\QCBOptA{#2}%
}
\def\QCBNOpt#1{%
  \def\QCBOptA{#1}%
  \let\QCBOptB\empty
}
\def\Qcb{%
  \@ifnextchar[{%
    \QCBOpt}{\QCBNOpt}%
}
\def\PrepCapArgs{%
  \ifx\QCBOptA\empty
    \ifx\QCTOptA\empty
      {}%
    \else
      \ifx\QCTOptB\empty
        {\QCTOptA}%
      \else
        [\QCTOptB]{\QCTOptA}%
      \fi
    \fi
  \else
    \ifx\QCBOptA\empty
      {}%
    \else
      \ifx\QCBOptB\empty
        {\QCBOptA}%
      \else
        [\QCBOptB]{\QCBOptA}%
      \fi
    \fi
  \fi
}
\def\GRAPHICSPS#1{%
 \ifcase\GRAPHICSTYPE
   \special{ps: #1}%
 \or
   \special{language "PS", include "#1"}%
 \fi
}%
\def\graffile#1#2#3#4{%
    \bgroup
	   \@inlabelfalse
       \leavevmode
       \@ifundefined{bbl@deactivate}{\def~{\string~}}{\activesoff}%
        \raise -#4 \BOXTHEFRAME{%
           \hbox to #2{\raise #3\hbox to #2{\null #1\hfil}}}%
    \egroup
}%
\def\draftbox#1#2#3#4{%
 \leavevmode\raise -#4 \hbox{%
  \frame{\rlap{\protect\tiny #1}\hbox to #2%
   {\vrule height#3 width\z@ depth\z@\hfil}%
  }%
 }%
}%
\let\nographics=\@msidraft
\newif\ifwasdraft
\def\GRAPHIC#1#2#3#4#5{%
   \ifnum\@msidraft=\@ne\draftbox{#2}{#3}{#4}{#5}%
   \else\graffile{#1}{#3}{#4}{#5}%
   \fi
}
\def\addtoLaTeXparams#1{%
    \edef\LaTeXparams{\LaTeXparams #1}}%
\newif\ifBoxFrame \BoxFramefalse
\newif\ifOverFrame \OverFramefalse
\newif\ifUnderFrame \UnderFramefalse
\def\BOXTHEFRAME#1{%
   \hbox{%
      \ifBoxFrame
         \frame{#1}%
      \else
         {#1}%
      \fi
   }%
}
\def\doFRAMEparams#1{\BoxFramefalse\OverFramefalse\UnderFramefalse\readFRAMEparams#1\end}%
\def\readFRAMEparams#1{%
 \ifx#1\end%
  \let\next=\relax
  \else
  \ifx#1i\dispkind=\z@\fi
  \ifx#1d\dispkind=\@ne\fi
  \ifx#1f\dispkind=\tw@\fi
  \ifx#1t\addtoLaTeXparams{t}\fi
  \ifx#1b\addtoLaTeXparams{b}\fi
  \ifx#1p\addtoLaTeXparams{p}\fi
  \ifx#1h\addtoLaTeXparams{h}\fi
  \ifx#1X\BoxFrametrue\fi
  \ifx#1O\OverFrametrue\fi
  \ifx#1U\UnderFrametrue\fi
  \ifx#1w
    \ifnum\@msidraft=1\wasdrafttrue\else\wasdraftfalse\fi
    \@msidraft=\@ne
  \fi
  \let\next=\readFRAMEparams
  \fi
 \next
 }%
\def\IFRAME#1#2#3#4#5#6{%
      \bgroup
      \let\QCTOptA\empty
      \let\QCTOptB\empty
      \let\QCBOptA\empty
      \let\QCBOptB\empty
      #6%
      \parindent=0pt
      \leftskip=0pt
      \rightskip=0pt
      \setbox0=\hbox{\QCBOptA}%
      \@tempdima=#1\relax
      \ifOverFrame
          \typeout{This is not implemented yet}%
          \show\HELP
      \else
         \ifdim\wd0>\@tempdima
            \advance\@tempdima by \@tempdima
            \ifdim\wd0 >\@tempdima
               \setbox1 =\vbox{%
                  \unskip\hbox to \@tempdima{\hfill\GRAPHIC{#5}{#4}{#1}{#2}{#3}\hfill}%
                  \unskip\hbox to \@tempdima{\parbox[b]{\@tempdima}{\QCBOptA}}%
               }%
               \wd1=\@tempdima
            \else
               \textwidth=\wd0
               \setbox1 =\vbox{%
                 \noindent\hbox to \wd0{\hfill\GRAPHIC{#5}{#4}{#1}{#2}{#3}\hfill}\\%
                 \noindent\hbox{\QCBOptA}%
               }%
               \wd1=\wd0
            \fi
         \else
            \ifdim\wd0>0pt
              \hsize=\@tempdima
              \setbox1=\vbox{%
                \unskip\GRAPHIC{#5}{#4}{#1}{#2}{0pt}%
                \break
                \unskip\hbox to \@tempdima{\hfill \QCBOptA\hfill}%
              }%
              \wd1=\@tempdima
           \else
              \hsize=\@tempdima
              \setbox1=\vbox{%
                \unskip\GRAPHIC{#5}{#4}{#1}{#2}{0pt}%
              }%
              \wd1=\@tempdima
           \fi
         \fi
         \@tempdimb=\ht1
         \advance\@tempdimb by -#2
         \advance\@tempdimb by #3
         \leavevmode
         \raise -\@tempdimb \hbox{\box1}%
      \fi
      \egroup%
}%
\def\DFRAME#1#2#3#4#5{%
  \vspace\topsep
  \hfil\break
  \bgroup
     \leftskip\@flushglue
	 \rightskip\@flushglue
	 \parindent\z@
	 \parfillskip\z@skip
     \let\QCTOptA\empty
     \let\QCTOptB\empty
     \let\QCBOptA\empty
     \let\QCBOptB\empty
	 \vbox\bgroup
        \ifOverFrame 
           #5\QCTOptA\par
        \fi
        \GRAPHIC{#4}{#3}{#1}{#2}{\z@}%
        \ifUnderFrame 
           \break#5\QCBOptA
        \fi
	 \egroup
  \egroup
  \vspace\topsep
  \break
}%
\def\FFRAME#1#2#3#4#5#6#7{%
  \@ifundefined{floatstyle}
    {
     \begin{figure}[#1]%
    }
    {
	 \ifx#1h
      \begin{figure}[H]%
	 \else
      \begin{figure}[#1]%
	 \fi
	}
  \let\QCTOptA\empty
  \let\QCTOptB\empty
  \let\QCBOptA\empty
  \let\QCBOptB\empty
  \ifOverFrame
    #4
    \ifx\QCTOptA\empty
    \else
      \ifx\QCTOptB\empty
        \caption{\QCTOptA}%
      \else
        \caption[\QCTOptB]{\QCTOptA}%
      \fi
    \fi
    \ifUnderFrame\else
      \label{#5}%
    \fi
  \else
    \UnderFrametrue%
  \fi
  \begin{center}\GRAPHIC{#7}{#6}{#2}{#3}{\z@}\end{center}%
  \ifUnderFrame
    #4
    \ifx\QCBOptA\empty
      \caption{}%
    \else
      \ifx\QCBOptB\empty
        \caption{\QCBOptA}%
      \else
        \caption[\QCBOptB]{\QCBOptA}%
      \fi
    \fi
    \label{#5}%
  \fi
  \end{figure}%
 }%
\def\makeactives{
  \catcode`\"=\active
  \catcode`\;=\active
  \catcode`\:=\active
  \catcode`\'=\active
  \catcode`\~=\active
}
   \gdef\activesoff{%
      \def"{\string"}%
      \def;{\string;}%
      \def:{\string:}%
      \def'{\string'}%
      \def~{\string~}%
    }
\def\FRAME#1#2#3#4#5#6#7#8{%
 \bgroup
 \ifnum\@msidraft=\@ne
   \wasdrafttrue
 \else
   \wasdraftfalse%
 \fi
 \def\LaTeXparams{}%
 \dispkind=\z@
 \def\LaTeXparams{}%
 \doFRAMEparams{#1}%
 \ifnum\dispkind=\z@\IFRAME{#2}{#3}{#4}{#7}{#8}{#5}\else
  \ifnum\dispkind=\@ne\DFRAME{#2}{#3}{#7}{#8}{#5}\else
   \ifnum\dispkind=\tw@
    \edef\@tempa{\noexpand\FFRAME{\LaTeXparams}}%
    \@tempa{#2}{#3}{#5}{#6}{#7}{#8}%
    \fi
   \fi
  \fi
  \ifwasdraft\@msidraft=1\else\@msidraft=0\fi{}%
  \egroup
 }%
\def\TEXUX#1{"texux"}
\def\limfunc#1{\mathop{\rm #1}}%
\def\func#1{\mathop{\rm #1}\nolimits}%
\long\def\QQQ#1#2{%
     \long\expandafter\def\csname#1\endcsname{#2}}%
\long\def\QQA#1#2{}%
\def\QTR#1#2{{\csname#1\endcsname {#2}}}%
\def\EXPAND#1[#2]#3{}%
\def\NOEXPAND#1[#2]#3{}%
\def\LaTeXparent#1{}%
\def\ChildStyles#1{}%
\def\ChildDefaults#1{}%
\def\QTagDef#1#2#3{}%
  \providecommand{\UNICODE}[2][]{\protect\rule{.1in}{.1in}}
  \providecommand{\U}[1]{\protect\rule{.1in}{.1in}}
\def\QQfnmark#1{\footnotemark}
 \def\abstract{%
  \if@twocolumn
   \section*{Abstract (Not appropriate in this style!)}%
   \else \small 
   \begin{center}{\bf Abstract\vspace{-.5em}\vspace{\z@}}\end{center}%
   \quotation 
   \fi
  }%
   \def\registered{\relax\ifmmode{}\r@gistered
                    \else$\m@th\r@gistered$\fi}%
 \def\r@gistered{^{\ooalign
  {\hfil\raise.07ex\hbox{$\scriptstyle\rm\text{R}$}\hfil\crcr
  \mathhexbox20D}}}}{}%
\def\TEXTsymbol#1{\mbox{$#1$}}%
\newdimen\theight
\def\newfmtname{LaTeX2e}
  \DeclareOldFontCommand{\rm}{\normalfont\rmfamily}{\mathrm}
  \DeclareOldFontCommand{\sf}{\normalfont\sffamily}{\mathsf}
  \DeclareOldFontCommand{\tt}{\normalfont\ttfamily}{\mathtt}
  \DeclareOldFontCommand{\bf}{\normalfont\bfseries}{\mathbf}
  \DeclareOldFontCommand{\it}{\normalfont\itshape}{\mathit}
  \DeclareOldFontCommand{\sl}{\normalfont\slshape}{\@nomath\sl}
  \DeclareOldFontCommand{\sc}{\normalfont\scshape}{\@nomath\sc}
\def\alpha{{\Greekmath 010B}}%
\def\beta{{\Greekmath 010C}}%
\def\gamma{{\Greekmath 010D}}%
\def\delta{{\Greekmath 010E}}%
\def\epsilon{{\Greekmath 010F}}%
\def\zeta{{\Greekmath 0110}}%
\def\eta{{\Greekmath 0111}}%
\def\theta{{\Greekmath 0112}}%
\def\iota{{\Greekmath 0113}}%
\def\kappa{{\Greekmath 0114}}%
\def\lambda{{\Greekmath 0115}}%
\def\mu{{\Greekmath 0116}}%
\def\nu{{\Greekmath 0117}}%
\def\xi{{\Greekmath 0118}}%
\def\pi{{\Greekmath 0119}}%
\def\rho{{\Greekmath 011A}}%
\def\sigma{{\Greekmath 011B}}%
\def\tau{{\Greekmath 011C}}%
\def\upsilon{{\Greekmath 011D}}%
\def\phi{{\Greekmath 011E}}%
\def\chi{{\Greekmath 011F}}%
\def\psi{{\Greekmath 0120}}%
\def\omega{{\Greekmath 0121}}%
\def\varepsilon{{\Greekmath 0122}}%
\def\vartheta{{\Greekmath 0123}}%
\def\varpi{{\Greekmath 0124}}%
\def\varrho{{\Greekmath 0125}}%
\def\varsigma{{\Greekmath 0126}}%
\def\varphi{{\Greekmath 0127}}%
\def\nabla{{\Greekmath 0272}}
\def\FindBoldGroup{%
   {\setbox0=\hbox{$\mathbf{x\global\edef\theboldgroup{\the\mathgroup}}$}}%
}
\def\Greekmath#1#2#3#4{%
    \if@compatibility
        \ifnum\mathgroup=\symbold
           \mathchoice{\mbox{\boldmath$\displaystyle\mathchar"#1#2#3#4$}}%
                      {\mbox{\boldmath$\textstyle\mathchar"#1#2#3#4$}}%
                      {\mbox{\boldmath$\scriptstyle\mathchar"#1#2#3#4$}}%
                      {\mbox{\boldmath$\scriptscriptstyle\mathchar"#1#2#3#4$}}%
        \else
           \mathchar"#1#2#3#4%
        \fi 
    \else 
        \FindBoldGroup
        \ifnum\mathgroup=\theboldgroup 
           \mathchoice{\mbox{\boldmath$\displaystyle\mathchar"#1#2#3#4$}}%
                      {\mbox{\boldmath$\textstyle\mathchar"#1#2#3#4$}}%
                      {\mbox{\boldmath$\scriptstyle\mathchar"#1#2#3#4$}}%
                      {\mbox{\boldmath$\scriptscriptstyle\mathchar"#1#2#3#4$}}%
        \else
           \mathchar"#1#2#3#4%
        \fi     	    
	  \fi}
\newif\ifGreekBold  \GreekBoldfalse
\let\SAVEPBF=\pbf
\def\pbf{\GreekBoldtrue\SAVEPBF}%
  \newcounter{equationnumber}  
  \def\mathletters{%
     \addtocounter{equation}{1}
     \edef\@currentlabel{\theequation}%
     \setcounter{equationnumber}{\c@equation}
     \setcounter{equation}{0}%
     \edef\theequation{\@currentlabel\noexpand\alph{equation}}%
  }
    \def\BibTeX{{\rm B\kern-.05em{\sc i\kern-.025em b}\kern-.08em
                 T\kern-.1667em\lower.7ex\hbox{E}\kern-.125emX}}}{}%
\def\AmS{{\protect\usefont{OMS}{cmsy}{m}{n}%
                A\kern-.1667em\lower.5ex\hbox{M}\kern-.125emS}}}{}%
\def\@@eqncr{\let\@tempa\relax
    \ifcase\@eqcnt \def\@tempa{& & &}\or \def\@tempa{& &}%
      \else \def\@tempa{&}\fi
     \@tempa
     \if@eqnsw
        \iftag@
           \@taggnum
        \else
           \@eqnnum\stepcounter{equation}%
        \fi
     \fi
     \global\tag@false
     \global\@eqnswtrue
     \global\@eqcnt\z@\cr}
\def\TCItag{\@ifnextchar*{\@TCItagstar}{\@TCItag}}
\def\@TCItag#1{%
    \global\tag@true
    \global\def\@taggnum{(#1)}}
\def\@TCItagstar*#1{%
    \global\tag@true
    \global\def\@taggnum{#1}}
\def\tsum{\mathop{\textstyle \sum }}%
\def\tprod{\mathop{\textstyle \prod }}%
\def\tbigoplus{\mathop{\textstyle \bigoplus }}%
\def\ExitTCILatex{\makeatother }
\if@compatibility\message{amsmath already loaded}\fi\aftergroup\ExitTCILatex}
\if@compatibility\message{amstex already loaded}\fi\aftergroup\ExitTCILatex}
\if@compatibility\message{amsgen already loaded}\fi\aftergroup\ExitTCILatex}
\let\DOTSI\relax
\def\RIfM@{\relax\ifmmode}%
\def\FN@{\futurelet\next}%
\def\iint{\DOTSI\intno@\tw@\FN@\ints@}%
\def\iiint{\DOTSI\intno@\thr@@\FN@\ints@}%
\def\iiiint{\DOTSI\intno@4 \FN@\ints@}%
\def\idotsint{\DOTSI\intno@\z@\FN@\ints@}%
\def\ints@{\findlimits@\ints@@}%
\newif\iflimtoken@
\newif\iflimits@
\def\findlimits@{\limtoken@true\ifx\next\limits\limits@true
 \else\ifx\next\nolimits\limits@false\else
 \limtoken@false\ifx\ilimits@\nolimits\limits@false\else
 \ifinner\limits@false\else\limits@true\fi\fi\fi\fi}%
\def\multint@{\int\ifnum\intno@=\z@\intdots@                          
 \else\intkern@\fi                                                    
 \ifnum\intno@>\tw@\int\intkern@\fi                                   
 \ifnum\intno@>\thr@@\int\intkern@\fi                                 
 \int}
\def\multintlimits@{\intop\ifnum\intno@=\z@\intdots@\else\intkern@\fi
 \ifnum\intno@>\tw@\intop\intkern@\fi
 \ifnum\intno@>\thr@@\intop\intkern@\fi\intop}%
\def\intic@{%
    \mathchoice{\hskip.5em}{\hskip.4em}{\hskip.4em}{\hskip.4em}}%
\def\negintic@{\mathchoice
 {\hskip-.5em}{\hskip-.4em}{\hskip-.4em}{\hskip-.4em}}%
\def\ints@@{\iflimtoken@                                              
 \def\ints@@@{\iflimits@\negintic@
   \mathop{\intic@\multintlimits@}\limits                             
  \else\multint@\nolimits\fi                                          
  \eat@}
 \else                                                                
 \def\ints@@@{\iflimits@\negintic@
  \mathop{\intic@\multintlimits@}\limits\else
  \multint@\nolimits\fi}\fi\ints@@@}%
\def\intkern@{\mathchoice{\!\!\!}{\!\!}{\!\!}{\!\!}}%
\def\plaincdots@{\mathinner{\cdotp\cdotp\cdotp}}%
\def\intdots@{\mathchoice{\plaincdots@}%
 {{\cdotp}\mkern1.5mu{\cdotp}\mkern1.5mu{\cdotp}}%
 {{\cdotp}\mkern1mu{\cdotp}\mkern1mu{\cdotp}}%
 {{\cdotp}\mkern1mu{\cdotp}\mkern1mu{\cdotp}}}%
\def\RIfM@{\relax\protect\ifmmode}
\def\text{\RIfM@\expandafter\text@\else\expandafter\mbox\fi}
\let\nfss@text\text
\def\text@#1{\mathchoice
   {\textdef@\displaystyle\f@size{#1}}%
   {\textdef@\textstyle\tf@size{\firstchoice@false #1}}%
   {\textdef@\textstyle\sf@size{\firstchoice@false #1}}%
   {\textdef@\textstyle \ssf@size{\firstchoice@false #1}}%
   \glb@settings}
\def\textdef@#1#2#3{\hbox{{%
                    \everymath{#1}%
                    \let\f@size#2\selectfont
                    #3}}}
\newif\iffirstchoice@
\def\Let@{\relax\iffalse{\fi\let\\=\cr\iffalse}\fi}%
\def\vspace@{\def\vspace##1{\crcr\noalign{\vskip##1\relax}}}%
\def\multilimits@{\bgroup\vspace@\Let@
 \baselineskip\fontdimen10 \scriptfont\tw@
 \advance\baselineskip\fontdimen12 \scriptfont\tw@
 \lineskip\thr@@\fontdimen8 \scriptfont\thr@@
 \lineskiplimit\lineskip
 \vbox\bgroup\ialign\bgroup\hfil$\m@th\scriptstyle{##}$\hfil\crcr}%
\def\Sb{_\multilimits@}%
\def\endSb{\crcr\egroup\egroup\egroup}%
\def\Sp{^\multilimits@}%
\newdimen\ex@
\def\rightarrowfill@#1{$#1\m@th\mathord-\mkern-6mu\cleaders
 \hbox{$#1\mkern-2mu\mathord-\mkern-2mu$}\hfill
 \mkern-6mu\mathord\rightarrow$}%
\def\leftarrowfill@#1{$#1\m@th\mathord\leftarrow\mkern-6mu\cleaders
 \hbox{$#1\mkern-2mu\mathord-\mkern-2mu$}\hfill\mkern-6mu\mathord-$}%
\def\leftrightarrowfill@#1{$#1\m@th\mathord\leftarrow
\mkern-6mu\cleaders
 \hbox{$#1\mkern-2mu\mathord-\mkern-2mu$}\hfill
 \mkern-6mu\mathord\rightarrow$}%
\def\overrightarrow{\mathpalette\overrightarrow@}%
\def\overrightarrow@#1#2{\vbox{\ialign{##\crcr\rightarrowfill@#1\crcr
 \noalign{\kern-\ex@\nointerlineskip}$\m@th\hfil#1#2\hfil$\crcr}}}%
\def\overleftarrow{\mathpalette\overleftarrow@}%
\def\overleftarrow@#1#2{\vbox{\ialign{##\crcr\leftarrowfill@#1\crcr
 \noalign{\kern-\ex@\nointerlineskip}$\m@th\hfil#1#2\hfil$\crcr}}}%
\def\overleftrightarrow{\mathpalette\overleftrightarrow@}%
\def\overleftrightarrow@#1#2{\vbox{\ialign{##\crcr
   \leftrightarrowfill@#1\crcr
 \noalign{\kern-\ex@\nointerlineskip}$\m@th\hfil#1#2\hfil$\crcr}}}%
\def\underrightarrow{\mathpalette\underrightarrow@}%
\def\underrightarrow@#1#2{\vtop{\ialign{##\crcr$\m@th\hfil#1#2\hfil
  $\crcr\noalign{\nointerlineskip}\rightarrowfill@#1\crcr}}}%
\def\underleftarrow{\mathpalette\underleftarrow@}%
\def\underleftarrow@#1#2{\vtop{\ialign{##\crcr$\m@th\hfil#1#2\hfil
  $\crcr\noalign{\nointerlineskip}\leftarrowfill@#1\crcr}}}%
\def\underleftrightarrow{\mathpalette\underleftrightarrow@}%
\def\underleftrightarrow@#1#2{\vtop{\ialign{##\crcr$\m@th
  \hfil#1#2\hfil$\crcr
 \noalign{\nointerlineskip}\leftrightarrowfill@#1\crcr}}}%
\def\qopnamewl@#1{\mathop{\operator@font#1}\nlimits@}
\let\nlimits@\displaylimits
\def\setboxz@h{\setbox\z@\hbox}
\def\varlim@#1#2{\mathop{\vtop{\ialign{##\crcr
 \hfil$#1\m@th\operator@font lim$\hfil\crcr
 \noalign{\nointerlineskip}#2#1\crcr
 \noalign{\nointerlineskip\kern-\ex@}\crcr}}}}
 \def\rightarrowfill@#1{\m@th\setboxz@h{$#1-$}\ht\z@\z@
  $#1\copy\z@\mkern-6mu\cleaders
  \hbox{$#1\mkern-2mu\box\z@\mkern-2mu$}\hfill
  \mkern-6mu\mathord\rightarrow$}
\def\leftarrowfill@#1{\m@th\setboxz@h{$#1-$}\ht\z@\z@
  $#1\mathord\leftarrow\mkern-6mu\cleaders
  \hbox{$#1\mkern-2mu\copy\z@\mkern-2mu$}\hfill
  \mkern-6mu\box\z@$}
\def\projlim{\qopnamewl@{proj\,lim}}
\def\injlim{\qopnamewl@{inj\,lim}}
\def\varinjlim{\mathpalette\varlim@\rightarrowfill@}
\def\varprojlim{\mathpalette\varlim@\leftarrowfill@}
\def\varliminf{\mathpalette\varliminf@{}}
\def\varliminf@#1{\mathop{\underline{\vrule\@depth.2\ex@\@width\z@
   \hbox{$#1\m@th\operator@font lim$}}}}
\def\varlimsup{\mathpalette\varlimsup@{}}
\def\varlimsup@#1{\mathop{\overline
  {\hbox{$#1\m@th\operator@font lim$}}}}
\def\align{\@verbatim \frenchspacing\@vobeyspaces \@alignverbatim
You are using the "align" environment in a style in which it is not defined.}
\let\csname endalign*\endcsname =\endtrivlist
\def\alignat{\@verbatim \frenchspacing\@vobeyspaces \@alignatverbatim
You are using the "alignat" environment in a style in which it is not defined.}
\let\csname endalignat*\endcsname =\endtrivlist
\def\xalignat{\@verbatim \frenchspacing\@vobeyspaces \@xalignatverbatim
You are using the "xalignat" environment in a style in which it is not defined.}
\let\csname endxalignat*\endcsname =\endtrivlist
\def\gather{\@verbatim \frenchspacing\@vobeyspaces \@gatherverbatim
You are using the "gather" environment in a style in which it is not defined.}
\let\csname endgather*\endcsname =\endtrivlist
\def\multiline{\@verbatim \frenchspacing\@vobeyspaces \@multilineverbatim
You are using the "multiline" environment in a style in which it is not defined.}
\let\csname endmultiline*\endcsname =\endtrivlist
\def\arrax{\@verbatim \frenchspacing\@vobeyspaces \@arraxverbatim
You are using a type of "array" construct that is only allowed in AmS-LaTeX.}
\def\tabulax{\@verbatim \frenchspacing\@vobeyspaces \@tabulaxverbatim
You are using a type of "tabular" construct that is only allowed in AmS-LaTeX.}
\let\csname endarrax*\endcsname =\endtrivlist
\let\csname endtabulax*\endcsname =\endtrivlist
 \def\endequation{%
     \ifmmode\ifinner 
      \iftag@
        \addtocounter{equation}{-1} 
        $\hfil
           \displaywidth\linewidth\@taggnum\egroup \endtrivlist
        \global\tag@false
        \global\@ignoretrue   
      \else
        $\hfil
           \displaywidth\linewidth\@eqnnum\egroup \endtrivlist
        \global\tag@false
        \global\@ignoretrue 
      \fi
     \else   
      \iftag@
        \addtocounter{equation}{-1} 
        \eqno \hbox{\@taggnum}
        \global\tag@false%
        $$\global\@ignoretrue
      \else
        \eqno \hbox{\@eqnnum}
        $$\global\@ignoretrue
      \fi
     \fi\fi
 } 
 \newif\iftag@ \tag@false
 \def\TCItag{\@ifnextchar*{\@TCItagstar}{\@TCItag}}
 \def\@TCItag#1{%
     \global\tag@true
     \global\def\@taggnum{(#1)}}
 \def\@TCItagstar*#1{%
     \global\tag@true
     \global\def\@taggnum{#1}}
     \def\tag{\@ifnextchar*{\@tagstar}{\@tag}}
     \def\@tag#1{%
         \global\tag@true
         \global\def\@taggnum{(#1)}}
     \def\@tagstar*#1{%
         \global\tag@true
         \global\def\@taggnum{#1}}
\def\tfrac#1#2{{\textstyle {#1 \over #2}}}%
\begin{document}
\title[Racah-Speiser Algorithm for Gromov-Witten Invariants]{A Combinatorial
Derivation of the Racah-Speiser Algorithm for Gromov-Witten invariants}
\author{Christian Korff }
\address{Department of Mathematics, University of Glasgow, Scotland, UK}
\email{c.korff@maths.gla.ac.uk}
\urladdr{http://www.maths.gla.ac.uk/\symbol{126}ck/}
\thanks{The author is financially supported by a University Research
Fellowship of the Royal Society.}
\date{18 October, 2009}
\subjclass{14N35,17B67,05E05,82B23}
\begin{abstract}
Using a finite-dimensional Clifford algebra a new combinatorial product formula for the small quantum cohomology ring of the complex Grassmannian is presented. In particular, Gromov-Witten invariants can be expressed through certain elements in the Clifford algebra, this leads to a $q$-deformation of the Racah-Speiser algorithm allowing for their computation in terms of Kostka numbers. The second main result is a simple and explicit combinatorial formula for projecting product expansions in the quantum cohomology ring onto the $\mathfrak{\widehat{sl}}(n)$ Verlinde algebra. This projection is non-trivial and amounts to an identity between numbers of rational curves intersecting Schubert varieties and dimensions of moduli spaces of generalised $\theta$-functions.
\end{abstract}

\maketitle



\section{Introduction}
In representation theory the Racah-Speiser algorithm \cite{Racah}, \cite{Speiser} (see also \cite[Exercise 25.31]{FuHa}) computes multiplicities in the tensor product decomposition of irreducible modules of semi-simple Lie algebras in terms of weight multiplicities. For $\mathfrak{sl}(n)$ the tensor product multiplicities coincide with the celebrated Littlewood-Richardson coefficients and the weight multiplicities with the Kostka numbers, both of which can be defined combinatorially by counting Littlewood-Richardson and semi-stan\-dard tableaux, respectively ( see e.g. \cite{FultonYT} for details). Alternatively, one can interpret the Littlewood-Richardson coefficients as intersection numbers of Schubert varieties, i.e. as structure constants of the cohomology ring $H^{\ast }(\func{Gr}_{n,n+k})$ of the complex Grassmannian $\func{Gr}_{n,n+k}$ of $n$-dimensional subspaces in $\mathbb{C}^{n+k}$.

Denote by $qH^{\ast }(\func{Gr}_{n,n+k})$ the \emph{small} \emph{quantum cohomology ring} which is a particular $q$-deformation of $H^{\ast}(\func{Gr}_{n,n+k})$ and whose structure constants are 3 point genus 0 Gromov-Witten invariants; details will be given in the text.
Employing the Clifford algebra (or \emph{free fermion}) formulation of $qH^{\ast }(\func{Gr}_{n,n+k})$ given in \cite[Part II]{ckcs} a new combinatorial product formula for the quantum cohomology ring is presented. As a consequence one obtains a modified, `quantum version' of the Racah-Speiser algorithm which allows one to compute also Gromov-Witten invariants in terms of Kostka numbers. Several explicit examples are provided and comparison is made with alternative methods such as the
{\em rim-hook algorithm} of Bertram, Ciocan-Fontanine and Fulton \cite{BCF}.

In the second part of the paper we discuss the $\widehat{\mathfrak{sl}}%
(n)_{k}$ fusion ring of the Wess-Zumino-Novikov-Witten (WZNW) model, also known as the Verlinde algebra. This is a non-trivial quotient of the quantum cohomology
ring; see Theorem \ref{quotient} below and \cite{ckcs} for details. In this
article an alternative description of the quotient is presented by proving a simple combinatorial identity between the structure constants of both rings; see
\eqref{curious} in the text. This identity between Gromov-Witten invariants
and fusion coefficients (the structure constants of the fusion ring) amounts to equating the number of rational curves
intersecting Schubert varieties with the dimension of moduli spaces of
generalised $\theta $-functions. Whether a geometric interpretation of this
result exists is currently an open problem. Exploiting the identity between the structure constants of both rings one can project the `quantum Racah-Speiser
algorithm' from the quantum cohomology ring onto the $\widehat{\mathfrak{sl}}%
(n)_{k}$ fusion ring and compare the result with what is known as {\em Kac-Walton formula} for fusion coefficients \cite{Kac}, \cite{Walton}, \cite{GoodmanWenzl}. In contrast to this known extension of the Racah-Speiser algorithm which employs the \emph{affine} Weyl group, the present algorithm only uses the \emph{finite} Weyl group, i.e. the symmetric group, and the outer Dynkin diagram automorphism of $\widehat{\mathfrak{sl}}(n)$. Finally, we make contact with the combinatorial description of the fusion ring contained in \cite[Part I]{ckcs} by demonstrating that it constitutes yet another algorithm which is `dual' to the one obtained by projection from the quantum cohomology ring.

\subsection{Free fermion formulation of quantum cohomology} I shall summarize the main results deferring proofs to Section \ref{sec:fermion}. Fix $N=n+k$ in $\mathbb{N}$. Here $n\in\mathbb{Z}_{\geq 0}$ is the dimension and $k=N-n$ the co-dimension, both of which are allowed to vary in the interval $[0,N]$ in what follows. Recall the following bijection between 01-words and partitions: denote by%
\begin{equation}
W_{n,N}=\left\{ w=w_{1}w_{2}\cdots w_{N}~\left\vert
~|w|=\tsum_{i}w_{i}=n\right. ,\;w_{i}\in \{0,1\}\right\}
\end{equation}%
the set of 01-words of length $N$ which contain $n$ one-letters. Denote their
positions from \emph{right to left} by $\ell _{1}>...>\ell _{n}$ with $1\leq
\ell _{i}\leq N$ . Then one has the following bijection $\mathfrak{P}_{\leq
n,k}\rightarrow W_{n,N}$
\begin{equation}\label{part2word}
\lambda \mapsto w(\lambda )=0\cdots 0\underset{\ell _{n}}{1}0\cdots 0%
\underset{\ell _{1}}{1}0\cdots 0,\qquad \ell _{i}(\lambda )=\lambda
_{i}+n+1-i\;.
\end{equation}%
We shall denote the image of the inverse of this map by $\lambda (w)$. This
correspondence can be easily understood graphically: the Young diagram of
the partition $\lambda $ traces out a path in the $n\times k$ rectangle
which is encoded in $w$. Starting from the left bottom corner in the $n\times k$ rectangle go one box right for each letter $0$ and one box up for each letter 1; see Figure \ref{fig:fermions} for an example.

\begin{figure}[tbp]
\includegraphics[scale=0.3]{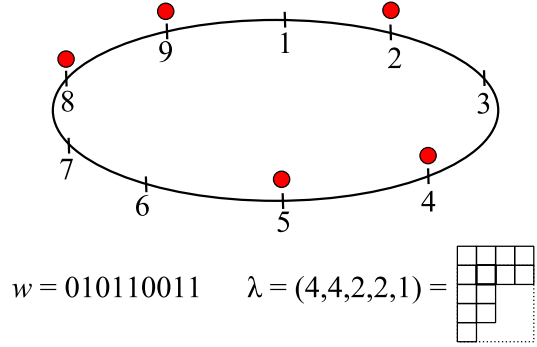}
\caption{Graphical depiction of a fermion configuration with $n=5$
particles, $k=4$ holes on a circle with $n+k=9$ sites. Below are the
corresponding 01-word $w$, partition $\protect\lambda=\protect\lambda(w)$
and its Young diagram.}
\label{fig:fermions}
\end{figure}

Consider the vector space%
\begin{equation}
\mathfrak{F}_{N}=\tbigoplus_{n=0}^{N}\mathfrak{F}_{n,N},\qquad \mathfrak{F}%
_{n,N}=\mathbb{C}W_{n,N},
\end{equation}%
where we set $\mathfrak{F}_{0}=\mathbb{C}\{00\cdots 0\}=\mathbb{C}$ and
refer to the zero-word $w=00\cdots 0$ as the\emph{\ vacuum vector} $%
\varnothing $. Define the integers $n_{i}(w)=w_{1}+\cdots +w_{i}$ which
count the number of $1$-letters lying in the closed interval $[1,i]$. For $%
1\leq i\leq N$ define the linear maps $\psi _{i}^{\ast },\psi _{i}:\mathfrak{%
F}_{n,N}\rightarrow \mathfrak{F}_{n\pm 1,N}$ \cite{ckcs},
\begin{eqnarray*}
\psi _{i}^{\ast }(w):= &&%
\begin{cases}
(-1)^{n_{i-1}(w)}w^{\prime }, & \text{$w_{i}=0$ and $w_{j}^{\prime
}=w_{j}+\delta _{i,j}$} \\
0, & \text{$w_{i}=1$}%
\end{cases}
\\
\psi _{i}(w):= &&%
\begin{cases}
(-1)^{n_{i-1}(w)}w^{\prime }, & \text{$w_{i}=1$ and $w_{j}^{\prime
}=w_{j}-\delta _{i,j}$} \\
0, & \text{$w_{i}=0$}.%
\end{cases}%
\end{eqnarray*}%

That is, up to a sign factor $\psi _{i}^{\ast }$ adds a 1-letter in $w$ at
position $i$. If that is not possible (since $w_{i}=1$) then it sends $w$ to
zero. Similarly, $\psi _{i}$ adds a zero letter at position $i$ if allowed.
In terms of partitions $\psi _{i}^{\ast }$ is the map which adds to a Young
diagram of a partition $\mu $ its top row (thereby increasing its height) and then
subtracts a boundary ribbon starting in the $(i-n)$-diagonal and ending in
the top row. Similarly, $\psi _{i}$ subtracts the top row of a Young diagram
and adds a boundary ribbon.

\begin{example}
{\rm To visualize the action of $\psi _{i}^{\ast }$ consider the special
case $n=k=4$ and $\mu =(4,3,3,1)$: $\psi _{3}^{\ast }\mu $ is depicted in
the figure below, where the entries in the diagram label the diagonals. The $%
(3-n)=-1$-diagonal determines the start of the boundary ribbon (the shaded
boxes) which has to be {\em subtracted}:
\begin{equation*}
\includegraphics[scale=0.25]{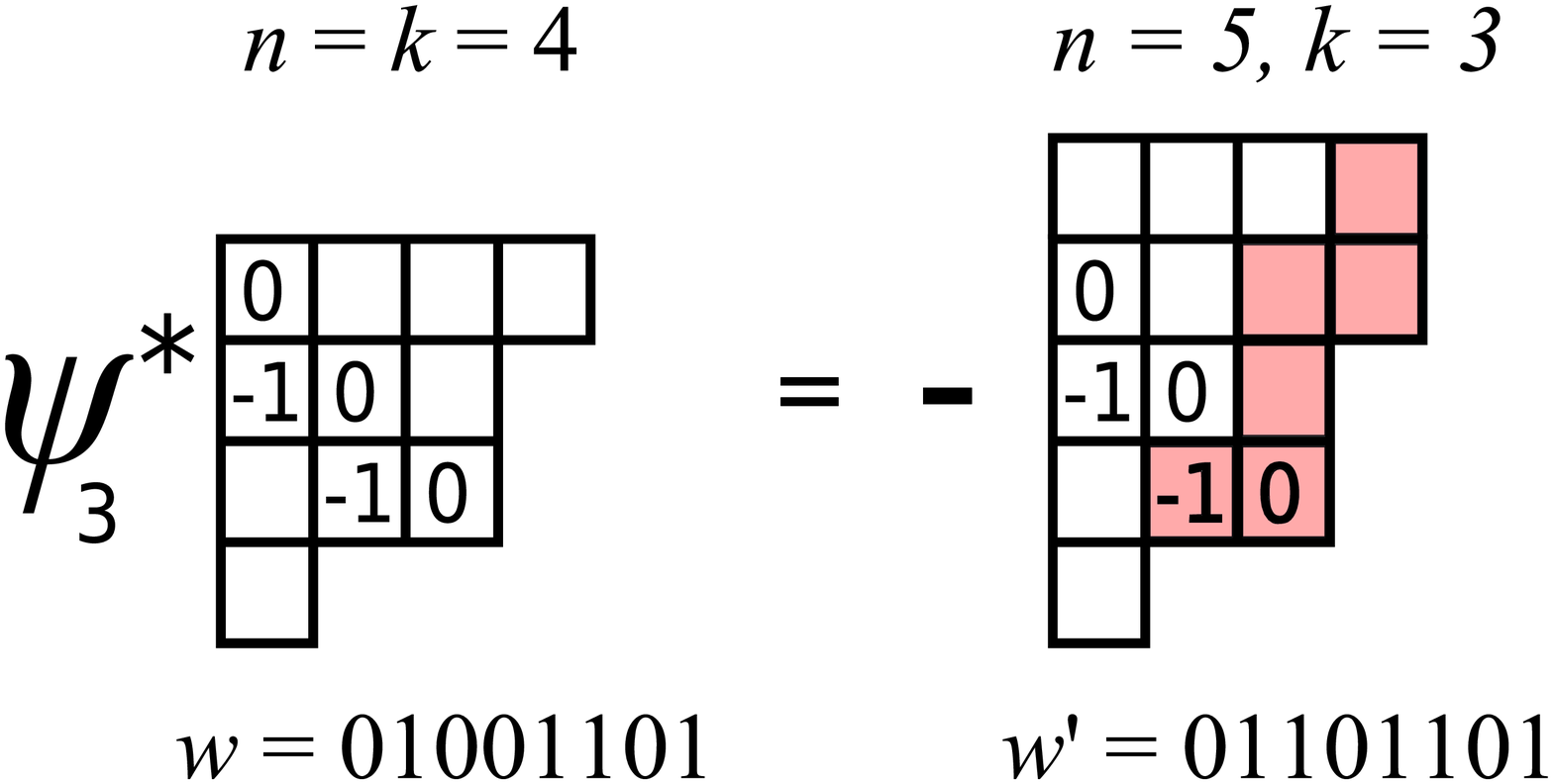}
\end{equation*}
For comparison, the action of $\psi_2$ on $\mu$ is
\begin{equation*}
\includegraphics[scale=0.25]{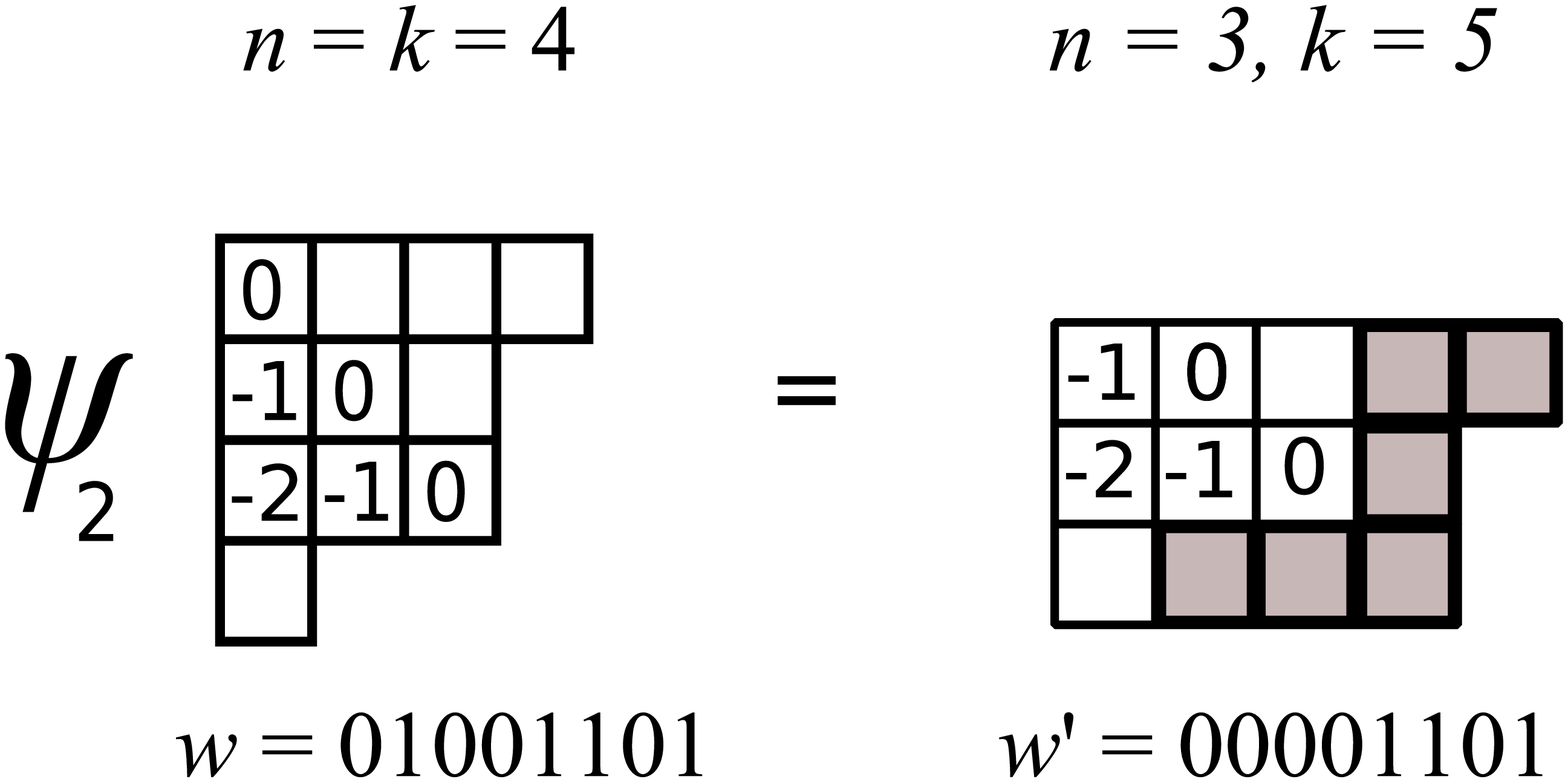}
\end{equation*}
where the shaded boxes now indicate the boundary ribbon which is {\em added} to obtain $\psi^\ast_2(\mu)=(5,4,4)$.}
\end{example}

The physical interpretation of these maps is the creation and annihilation
of a (quantum) particle at site $i$, respectively. Because we imposed the
Pauli exclusion principle, only one particle per site is allowed, we refer
to these particles as fermions. Henceforth, we shall interpret $\psi
_{i},\psi _{i}^{\ast }$ as elements in $\func{End}\mathfrak{F}_{N}$.

\begin{proposition}
The above endomorphisms $\psi _{i},\psi _{i}^{\ast }$ yield an irreducible
representation of the Clifford algebra with relations%
\begin{equation}
\psi _{i}\psi _{j}+\psi _{j}\psi _{i}=\psi _{i}^{\ast }\psi _{j}^{\ast
}+\psi _{j}^{\ast }\psi _{i}^{\ast }=0,\qquad \psi _{i}\psi _{j}^{\ast
}+\psi _{j}^{\ast }\psi _{i}=\delta _{ij}\;.  \label{Cliff relations}
\end{equation}%
Introducing the scalar product $\left\langle w,w^{\prime }\right\rangle
=\prod_{i}\delta _{w_{i},w_{i}^{\prime }}$ (anti-linear in the first factor)
one has the relation $\left\langle \psi _{i}^{\ast }w,w^{\prime
}\right\rangle =\left\langle w,\psi _{i}w^{\prime }\right\rangle $ for any
pair $w,w^{\prime }\in \mathfrak{F}_{N}$.
\end{proposition}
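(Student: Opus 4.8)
The plan is to verify all three assertions by direct computation on the distinguished word basis of $\mathfrak{F}_N$, since every map in sight is defined combinatorially on $W_{n,N}$. First I would establish the Clifford relations \eqref{Cliff relations} by evaluating each anticommutator on an arbitrary word $w=w_1\cdots w_N$. The relations $\psi_i^2=0$ and $(\psi_i^*)^2=0$ are immediate: once $\psi_i$ has acted (which needs $w_i=1$) the $i$-th letter is $0$, so a second $\psi_i$ annihilates it, and dually for $\psi_i^*$. For $\psi_i\psi_j+\psi_j\psi_i$ with $i\neq j$ both terms survive only when $w_i=w_j=1$, and they then produce the same word; the only thing to check is that the accumulated signs are opposite. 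This follows from a single observation about the sign-counting function: $n_{m-1}$ is unchanged by altering a letter at a position $>m-1$ and is shifted by exactly $\pm1$ by altering one at a position $\le m-1$, so comparing the two orders of removal at positions $i$ and $j$ (say $i<j$) yields an extra $-1$ in precisely one of them. The computation for $\psi_i^*\psi_j^*+\psi_j^*\psi_i^*$ is identical with "remove" replaced by "insert". For the mixed relation with $i=j$ one splits on $w_i\in\{0,1\}$: in each case exactly one of $\psi_i\psi_i^*$, $\psi_i^*\psi_i$ acts nontrivially, the two signs $(-1)^{n_{i-1}}$ that are incurred cancel (inserting or removing at position $i$ leaves $n_{i-1}$ fixed), and one recovers $w$, so $\psi_i\psi_i^*+\psi_i^*\psi_i=\mathrm{id}$; for $i\neq j$ the same $n_{m-1}$-shift bookkeeping shows that whenever both terms survive (forcing $w_i=1$, $w_j=0$) their signs are opposite.

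Next I would prove irreducibility. The key point is that $N_i:=\psi_i^*\psi_i$ lies in the represented algebra and acts diagonally in the word basis with $N_i w=w_i\,w$, as the sign-cancellation above shows. Since a word in $W_{n,N}$ is uniquely determined by the tuple of eigenvalues $(w_1,\dots,w_N)$ of the commuting operators $N_1,\dots,N_N$, any submodule $V\subseteq\mathfrak{F}_N$ is a sum of joint eigenspaces of the $N_i$, hence spanned by a subset of the word basis. Given $0\neq v\in V$, pick a basis word $w$ occurring in $v$; then $w\in V$. Applying the $\psi_i$ to lower each $1$-letter of $w$ in turn reaches $\varnothing\in V$ (each step is nonzero because that position still carries a $1$), and applying the $\psi_i^*$ builds up an arbitrary word from $\varnothing$; as every such step carries a basis word to a nonzero scalar multiple of another basis word, $V$ contains all of $W_{n,N}$, so $V=\mathfrak{F}_N$. (One could alternatively note that $\dim\mathfrak{F}_N=2^N$ equals the dimension of the unique irreducible module of the Clifford algebra on $2N$ generators, but the connectivity argument is self-contained.)

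Finally, the adjointness relation $\langle\psi_i^*w,w'\rangle=\langle w,\psi_i w'\rangle$ is checked on pairs of words. Both sides vanish unless $w'$ is obtained from $w$ by changing the $i$-th letter from $0$ to $1$; in that case $\psi_i^* w=(-1)^{n_{i-1}(w)}w'$ gives left side $(-1)^{n_{i-1}(w)}$, while $\psi_i w'=(-1)^{n_{i-1}(w')}w$ gives right side $(-1)^{n_{i-1}(w')}$, and these agree since $w$ and $w'$ coincide in positions $1,\dots,i-1$. Extending sesquilinearly (anti-linear in the first slot) yields the statement on all of $\mathfrak{F}_N$.

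I expect no genuine obstacle here: the proof is a matter of organizing cases. The only part requiring care is the sign analysis in the anticommutation relations, i.e. keeping the index ranges in $n_{m-1}$ straight when comparing the two orders of application — the standard fermionic sign computation.
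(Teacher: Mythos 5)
Your proof is correct, and it is exactly the ``straightforward'' direct verification that the paper itself omits, deferring instead to \cite{ckcs}: the sign bookkeeping via the shift of $n_{m-1}$, the diagonal number operators $N_i=\psi_i^*\psi_i$ plus connectivity through $\varnothing$ for irreducibility, and the matching of $n_{i-1}(w)=n_{i-1}(w')$ for adjointness are all sound. Nothing to add.
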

The proof is straightforward and can be found in \cite{ckcs}. We are now ready to state the fermion description of $qH^{\ast }(\func{Gr}_{n,N})$. Let $q$ be the deformation parameter of the quantum cohomology ring
and extend the state space as follows, $\mathfrak{F}_{N}[q]=\mathbb{C}%
[q]\otimes _{\mathbb{C}}\mathfrak{F}_{N}$. Given any pair $w,w'\in \mathfrak{F}_{n,N}[q]$ of 01-words let $\lambda ,\mu \in \mathfrak{P}%
_{\leq n,N-n}$ be the corresponding partitions under the bijection \eqref{part2word}. Define the following product $\star:\mathfrak{F}%
_{n,N}[q]\times \mathfrak{F}_{n,N}[q]\rightarrow \mathfrak{F}_{n,N}[q]$,%
\begin{equation}
\lambda\star\mu:=\sum_{T}\psi _{\ell _{n}(\mu )+t_{n}}^{\ast }%
\bar{\psi}_{\ell _{n-1}(\mu )+t_{n-1}}^{\ast }\psi _{\ell _{n-2}(\mu
)+t_{n-2}}^{\ast }\bar{\psi}_{\ell _{n-3}(\mu )+t_{n-3}}^{\ast }\cdots
~\varnothing ,  \label{fermi product}
\end{equation}%
where the sum runs over all semi-standard tableaux $T=T(\lambda )$ of shape $%
\lambda $ with $t_{i}$ being the number of entries $i$ in $T$. The indices
of the fermion creation operators in (\ref{fermi product}) can be greater
than $N$; we set $\bar{\psi}_{i}^{\ast }=\psi _{i}^{\ast }$ for $i=1,\ldots
,N$ and
\begin{equation}
\psi _{i+N}^{\ast }=(-1)^{\limfunc{n}+1}q\psi _{i}^{\ast },\qquad \bar{\psi}%
_{i+N}^{\ast }=(-1)^{\limfunc{n}}q\bar{\psi}_{i}^{\ast },  \label{qpbc2}
\end{equation}%
where $\limfunc{n}=\sum_{i=1}^{N}\psi _{i}^{\ast }\psi _{i}$ is the `particle
number operator'.

\begin{theorem}[Fermion presentation of quantum cohomology]
\label{main} \qquad

\begin{itemize}
\item[(i)] $(\mathfrak{F}_{n,N}[q],\star)$ is a commutative,
associative and unital algebra. 

\item[(ii)] Restrict $(\mathfrak{F}_{n,N}[q],\star)$ to $%
\mathbb{Z}[q]$, then $(\mathfrak{F}_{n,N}[q],\star)\cong qH^{\ast }(\func{Gr}_{n,N}).$ In particular, the structure constants $C_{\lambda\mu}^{\nu,d}$ in the product expansion
\begin{equation*}
\lambda\star\mu=\sum_d\sum_\nu q^d C_{\lambda\mu}^{\nu,d}\nu
\end{equation*}
are the Gromov-Witten invariants with $C_{\lambda \mu }^{\nu ,d}=0$ unless $|\lambda |+|\mu |-|\nu|=Nd$. The partitions $\nu$ correspond via \eqref{part2word} to the 01-words obtained by acting with the fermion creation operators in \eqref{fermi product} on the vacuum.
\end{itemize}
\end{theorem}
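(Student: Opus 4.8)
The plan is to deduce both parts of the theorem from a single combinatorial identity together with the free-fermion model of $qH^{\ast }(\func{Gr}_{n,N})$ established in \cite[Part II]{ckcs}. From that reference, $\mathfrak{F}_{n,N}[q]$ is identified with $qH^{\ast }(\func{Gr}_{n,N})$ as a $\mathbb{Z}[q]$-module via $w(\mu )\leftrightarrow \sigma _{\mu }$, multiplication by a special Schubert class $\sigma _{(r)}$ is implemented by an explicit Clifford-algebra operator on $\mathfrak{F}_{n,N}[q]$, and by the quantum Giambelli formula (which for $\func{Gr}_{n,N}$ is the classical Jacobi--Trudi determinant in the special classes) multiplication by an arbitrary $\sigma _{\lambda }$ is implemented by a Clifford-algebra operator $\widehat{\sigma}_{\lambda }$. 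The identity I would prove is that the tableau sum \eqref{fermi product} equals $\widehat{\sigma}_{\lambda }\bigl(w(\mu )\bigr)$, i.e. that it reproduces the quantum product $\sigma _{\lambda }\cdot \sigma _{\mu }$ written in the Schubert basis. Granting this, part (ii) is immediate: $\widehat{\sigma}_{\lambda }\bigl(w(\mu )\bigr)$ corresponds to $\sigma _{\lambda }\cdot \sigma _{\mu }=\sum_{d,\nu }q^{d}C_{\lambda \mu }^{\nu ,d}\sigma _{\nu }$ with $C_{\lambda \mu }^{\nu ,d}$ the Gromov--Witten invariants, and $C_{\lambda \mu }^{\nu ,d}=0$ unless $|\lambda |+|\mu |-|\nu |=Nd$ is the grading statement, proved below. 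Part (i) then follows because $qH^{\ast }(\func{Gr}_{n,N})$ is a commutative, associative, unital ring (unit $\sigma _{\varnothing }=1$) and, by the identity, $\star$ is the product transported along the isomorphism.

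The heart of the argument is thus the identity $\widehat{\sigma}_{\lambda }\bigl(w(\mu )\bigr)=\lambda \star \mu $, which I would establish by a normal-ordering (``straightening'') computation in the Clifford algebra. The operator $\widehat{\sigma}_{(r)}$ is, up to signs, the sum over $1$-letters of the move ``shift that $1$-letter $r$ sites to the right''; the quasi-periodic prescription \eqref{qpbc2} is exactly what turns a move running past site $N$ into a power of $q$, with the occupation numbers $n_{i-1}(w)$ recording the attendant reordering signs --- and this is also the source of the alternation of $\psi ^{\ast }$ and $\bar{\psi}^{\ast }$ in \eqref{fermi product}. Expanding the Giambelli determinant for $\widehat{\sigma}_{\lambda }$ into a product of such elementary moves, applying it to $w(\mu )$, and using \eqref{Cliff relations} repeatedly to normal-order the resulting Clifford monomials, the non-semistandard contributions cancel in pairs or vanish (a repeated creation operator is a vanishing Clifford square), and one is left with precisely one monomial $\psi _{\ell _{n}(\mu )+t_{n}}^{\ast }\bar{\psi}_{\ell _{n-1}(\mu )+t_{n-1}}^{\ast }\cdots \varnothing $ for each semistandard tableau $T$ of shape $\lambda $, the exponents $t_{j}$ reading off the content of $T$. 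Setting $q=0$ this collapses to the fermionic presentation of $H^{\ast }(\func{Gr}_{n,N})$ and the tableau rule for Littlewood--Richardson coefficients, so the whole step is a $q$-deformation of a classical identity.

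The grading assertion and unitality are then bookkeeping internal to \eqref{fermi product}. Since $\ell _{j}(\mu )=\mu _{j}+n+1-j$ one has $\sum_{j}\ell _{j}(\mu )=|\mu |+\binom{n+1}{2}$; the $j$-th particle is created at $\ell _{j}(\mu )+t_{j}$ and then shifted back by $N$ once for each invocation of \eqref{qpbc2}, contributing one factor $q$ per shift, and $\sum_{j}t_{j}=|T|=|\lambda |$, so if $\nu $ is the output word and $q^{d}$ the accumulated power then $|\mu |+\binom{n+1}{2}+|\lambda |=|\nu |+\binom{n+1}{2}+Nd$, giving $|\lambda |+|\mu |-|\nu |=Nd$. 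For unitality take $\lambda =\varnothing $: the only tableau of shape $\varnothing $ is empty, all $t_{j}=0$, and creating $1$-letters at $\ell _{1}(\mu )>\cdots >\ell _{n}(\mu )$ in that (decreasing) order forces $n_{i-1}=0$ at every step, so no signs and no wraps occur and $\varnothing \star \mu =w(\mu )$.

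The step I expect to be the real obstacle is the straightening computation of the second paragraph: one must show that, after expanding the Giambelli determinant and iterating \eqref{Cliff relations}, exactly the semistandard monomials survive and \emph{with the correct sign}, while simultaneously the boundary contributions produced by \eqref{qpbc2} reassemble into precisely the powers of $q$ dictated by the quantum cohomology relations --- equivalently, that the operators $\widehat{\sigma}_{(r)}$ satisfy quantum Pieri. Keeping track of the interplay between the positional signs $(-1)^{n_{i-1}(w)}$ and the global factors $(-1)^{\limfunc{n}}$, $(-1)^{\limfunc{n}+1}$ of \eqref{qpbc2} throughout this process is the delicate point; everything else is either quoted from \cite{ckcs} or elementary.
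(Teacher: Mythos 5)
Your overall architecture matches the paper's: both reduce everything to the fact (Theorem \ref{Schurprod}, from \cite[Part II]{ckcs} and \cite{Postnikov}) that multiplication by $\sigma_{\lambda}$ is realized on $\mathfrak{F}_{n,N}[q]$ by the noncommutative Schur polynomial $\boldsymbol{s}_{\lambda}$ of \eqref{ncSchurdef} (your $\widehat{\sigma}_{\lambda}$), so that parts (i) and (ii) follow once one proves $\boldsymbol{s}_{\lambda}\,w(\mu)=$ the tableau sum \eqref{fermi product}. Your grading computation $\sum_j\ell_j(\mu)+|\lambda|=\sum_j\ell_j(\nu)+Nd$ and the unitality check are correct and are fine supplements. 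The problem is that the one step carrying all the content --- the identity between the Giambelli-determinant operator and the semistandard-tableau sum --- is exactly the step you describe but do not execute. Saying that ``after expanding the determinant and iterating \eqref{Cliff relations}, the non-semistandard contributions cancel in pairs or vanish, with the correct sign, while the boundary terms reassemble into the right powers of $q$'' is a restatement of what must be proved, not a proof; the pair-cancellation pattern for a general $\det(\boldsymbol{e}_{\lambda_i^t-i+j})$ acting on an arbitrary word is a nontrivial noncommutative Jacobi--Trudi/involution argument, and the interaction with the quasi-periodic signs of \eqref{qpbc2} (which you yourself flag as the delicate point) is precisely where an ad hoc straightening attempt is likely to go wrong.

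The paper closes this gap by a different and much more economical mechanism: instead of expanding the determinant against the whole word $w(\mu)$ at once, it writes $w(\mu)=\psi^{\ast}_{\ell_n(\mu)}\cdots\psi^{\ast}_{\ell_1(\mu)}\varnothing$ and uses the commutation relation \eqref{Schurfermicomm},
\begin{equation*}
\boldsymbol{s}_{\lambda}\psi_{i}^{\ast}=\sum_{r=0}^{\lambda_1}\psi_{i+r}^{\ast}\sum_{\lambda/\rho=(r)}\boldsymbol{\bar{s}}_{\rho},
\end{equation*}
(quoted from \cite[Proposition 11.4]{ckcs}) to push $\boldsymbol{s}_{\lambda}$ past one creation operator at a time. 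Each step strips a horizontal strip off $\lambda$ and toggles $q\mapsto -q$ (whence the alternation of $\psi^{\ast}$ and $\bar{\psi}^{\ast}$ in \eqref{fermi product}, which is a bookkeeping of the $q$-sign and not, as you suggest, of the occupation-number signs); after $n$ steps one has a chain $\lambda=\rho_n\supset\rho_{n-1}\supset\cdots\supset\rho_0=\emptyset$ of horizontal strips, i.e.\ exactly a semistandard tableau of shape $\lambda$, with $t_i=|\rho_{n+1-i}/\rho_{n-i}|$. All the cancellation you hope for is thereby packaged once and for all into a single operator-level quantum Pieri statement. To repair your proposal you should either import \eqref{Schurfermicomm} and run this induction, or actually prove the Pieri-level statement for your elementary moves $\widehat{\sigma}_{(r)}$ and then still perform the inductive peeling --- the direct determinant expansion is not a viable shortcut.
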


Denote by $K_{\lambda ,\alpha }$ the number of semi-standard Young tableaux
of shape $\lambda $ and weight $\alpha $, i.e. when $\alpha $ is a partition $K_{\lambda ,\alpha }$ is the Kostka number. Kostka numbers appear as multiplicities in the representation theory of the
symmetric group; see e.g. \cite{FultonYT}. Recall that $K_{\lambda ,\alpha}=K_{\lambda ,\alpha'}$ for any permutation $\alpha'$ of $\alpha $.

\begin{corollary}[Quantum Racah-Speiser Algorithm]
\label{maincor} Let $\lambda ,\mu ,\nu \in \mathfrak{P}_{\leq n,k}$. Given a
permutation $\pi \in S_{n}$ set
\begin{equation*}
\alpha _{i}(\pi )=(\ell _{i}(\nu )-\ell _{\pi (i)}(\mu ))\func{mod}N\geq 0%
\text{\quad and\quad }d(\pi )=\#\{i~|~\ell _{i}(\nu )-\ell _{\pi (i)}(\mu
)<0\}\;.
\end{equation*}%
Then one has the following identity for Gromov-Witten invariants,%
\begin{equation}
C_{\lambda \mu }^{\nu ,d}=\sum_{\substack{ \pi \in S_{n}  \\ d(\pi )=d}}%
(-1)^{\ell (\pi )+(n-1)d}K_{\lambda ,\alpha (\pi )}\;.  \label{GWKostka}
\end{equation}%
\end{corollary}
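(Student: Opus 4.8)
The plan is to extract $C_{\lambda\mu}^{\nu,d}$ directly from the fermionic product formula \eqref{fermi product} of Theorem~\ref{main} and to reorganise the resulting sum over semi-standard tableaux according to how far the creation indices $\ell_i(\mu)+t_i$ stick out past $N$. By Theorem~\ref{main}(ii), $C_{\lambda\mu}^{\nu,d}$ is the coefficient of $q^d\nu$ in $\lambda\star\mu$, i.e.\ the sum, over all semi-standard tableaux $T$ of shape $\lambda$ on the alphabet $\{1,\dots,n\}$ (only $t_1,\dots,t_n$ occur in \eqref{fermi product}), of the coefficient of $q^d\nu$ in $\psi^\ast_{\ell_n(\mu)+t_n}\bar\psi^\ast_{\ell_{n-1}(\mu)+t_{n-1}}\cdots\varnothing$.

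The first genuinely useful observation — and the reason only the finite symmetric group appears — is that $t_i\le\lambda_1\le k$ for every $i$, since in a column-strict filling the value $i$ occurs at most once per column. Hence $1\le\ell_i(\mu)+t_i\le N+k<2N$, so under the quasi-periodicity \eqref{qpbc2} each creation index is reduced by at most one copy of $N$. Writing $\bar{p}_i=\ell_i(\mu)+t_i-Ne_i$ with $e_i\in\{0,1\}$ the indicator of $\ell_i(\mu)+t_i>N$, the operator word equals $\pm q^{\,e_1+\dots+e_n}w$, where $w$ carries $1$'s at $\bar{p}_1,\dots,\bar{p}_n$ and is nonzero precisely when these are distinct. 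Thus $T$ contributes to $q^d\nu$ exactly when $\{\bar{p}_1,\dots,\bar{p}_n\}=\{\ell_1(\nu),\dots,\ell_n(\nu)\}$ and $e_1+\dots+e_n=d$, and the problem reduces to enumerating such $T$ with signs.

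Next I would build the bijection with the right-hand side of \eqref{GWKostka}. Given a contributing $T$, distinctness of the $\bar{p}_i$ defines $\pi\in S_n$ by $\bar{p}_{\pi(j)}=\ell_j(\nu)$; using $0\le t_i\le k<N$ one reads off $t_{\pi(j)}=(\ell_j(\nu)-\ell_{\pi(j)}(\mu))\bmod N=\alpha_j(\pi)$ and $e_{\pi(j)}=\mathbf{1}[\ell_j(\nu)-\ell_{\pi(j)}(\mu)<0]$, so that $d=d(\pi)$. Conversely, for any $\pi$ with $d(\pi)=d$ the composition $i\mapsto\alpha_{\pi^{-1}(i)}(\pi)$ has sum $\sum_j\alpha_j(\pi)=|\nu|-|\mu|+Nd(\pi)=|\lambda|$, using the dimension constraint $|\lambda|+|\mu|-|\nu|=Nd$ of Theorem~\ref{main}(ii), and every semi-standard $T$ of shape $\lambda$ with that weight yields the word $\pm q^d\nu$ and induces this same $\pi$. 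Since the number of semi-standard tableaux of shape $\lambda$ with a prescribed weight depends only on the underlying multiset, the tableaux inducing a fixed $\pi$ number $K_{\lambda,\alpha(\pi)}$, and distinct $\pi$ give disjoint families.

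It then remains to show that each such $T$ carries sign $(-1)^{\ell(\pi)+(n-1)d}$, independently of $T$; this is the step I expect to be the main obstacle. The sign has two sources. The factors $(-1)^{n_{p-1}(\cdot)}$ built into $\psi^\ast,\bar\psi^\ast$ (still evaluated at the reduced position after a wrap, by \eqref{qpbc2}): when the operator delivering the $1$ at $\ell_j(\nu)$ fires, the $1$'s already in place are those at $\ell_{j'}(\nu)$ with $\pi(j')<\pi(j)$, of which only the ones with $\ell_{j'}(\nu)<\ell_j(\nu)$, i.e.\ $j'>j$, contribute; summing over $j$ gives the number of inversions of $\pi$, hence $(-1)^{\ell(\pi)}$. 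The factors coming from \eqref{qpbc2}: the $i$-th operator is $\psi^\ast$ if $i\equiv n$ and $\bar\psi^\ast$ if $i\not\equiv n\pmod 2$, and it acts on a word with $i-1$ fermions; in either parity the quasi-periodicity sign works out to $(-1)^{n-1}$, so the $d$ wraps together give $(-1)^{(n-1)d}$. Multiplying the two and summing over tableaux produces \eqref{GWKostka}. The delicate points are keeping the alternating $\psi^\ast/\bar\psi^\ast$ pattern, the particle count at which each operator fires, and the interleaving of wrapped and unwrapped fermions mutually consistent — together with the elementary but crucial bound $t_i\le k$, which is precisely what forces at most one wrap and hence the appearance of the finite symmetric group rather than the affine Weyl group. (The case $n=0$ is trivial.)
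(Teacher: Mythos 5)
Your proposal is correct and follows essentially the same route as the paper's own proof: both extract the structure constant from \eqref{fermi product} as a sum of Clifford-algebra vacuum expectation values over tableaux, group the tableaux by weight to produce the Kostka numbers, match the reduced particle positions to $\ell(\nu)$ via a permutation $\pi$ contributing $(-1)^{\ell(\pi)}$, and read off $(-1)^{(n-1)d}$ from the quasi-periodicity \eqref{qpbc2}. The paper compresses this into a few sentences, and the details you supply (the bound $t_i\leq k$ forcing at most one wrap per index, the disjointness of the tableau families indexed by $\pi$, and the explicit inversion count) are exactly the steps it leaves implicit.
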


Setting $q=0$ only the structure constants with $d=0$ survive and
the formula (\ref{GWKostka}) specialises to the following expression for
Littlewood-Richardson coefficients,%
\begin{equation}
c_{\lambda \mu }^{\nu }=\sum_{\substack{ \pi \in S_{n} \\ \ell _{i}(\nu
)-\ell _{\pi (i)}(\mu )\geq 0}}(-1)^{\ell(\pi) }K_{\lambda ,(\ell _{1}(\nu )-\ell
_{\pi (1)}(\mu ),\ldots ,\ell _{n}(\nu )-\ell _{\pi (n)}(\mu ))}\;.
\label{LRKostka}
\end{equation}%
Recall that the Kostka number $K_{\lambda ,\mu }$ gives the multiplicity of
the weight $\mu $ in the $\mathfrak{sl}(n)$-representation $V(\lambda )$ of
highest weight $\lambda $, while the Littlewood-Richardson coefficients
coincide with the multiplicity of the highest weight representation $V(\nu )$
in the tensor product decomposition of $V(\lambda )\otimes V(\mu )$. Thus,
this result can be interpreted as a combinatorial derivation of the \emph{%
Racah-Speiser algorithm} (also known as \emph{Weyl's method of characters})
and we shall therefore refer to \eqref{GWKostka} as `quantum Racah-Speiser algorithm'.\\

In light of the identity \eqref{GWKostka} recall that Kostka numbers can be computed either by Konstant's multiplicity formula (see e.g. \cite{FuHa}) or recursively (see e.g. \cite{MacDonald}),
\begin{equation}
K_{\lambda,(\alpha_1,\ldots,\alpha_{\ell},0,\ldots)}=\sum_\mu K_{\lambda-\mu,(\alpha_1,\ldots,\alpha_{\ell-1},0,\ldots)},
\end{equation}
where the sum runs over all compositions $\mu$ with $\mu_i\in\mathbb{Z}_{\geq 0}$ such that $\sum_i\mu_i=\alpha_\ell$ and $\lambda_i-\mu_i\geq\lambda_{i+1}$. The analogous result in representation theory is known as {\em Freudenthal recursion formula}.
\subsection{Projection onto the $\widehat{\mathfrak{sl}}(n)_{k}$ Verlinde
algebra}
The small quantum cohomology ring can be identified with the fusion ring or Verlinde algebra of the $\widehat{\mathfrak{u}}(n)\cong \widehat{\mathfrak{gl}}(n)$ WZNW model (at level $(k,n)$) when specialising to $q=1$; see \cite{Witten} and references therein. The latter model is a \emph{topological} field theory. Here we are interested in the Verlinde algebra of
the $\widehat{\mathfrak{su}}(n)_{k}\cong \widehat{\mathfrak{sl}}(n)_{k}$
WZNW model, which is a \emph{conformal} field theory. Because one has the
decomposition $\widehat{\mathfrak{u}}(n)\cong \widehat{\mathfrak{su}}%
(n)\otimes \widehat{\mathfrak{u}}(1)$ one also expects in this case a close
relationship, albeit somewhat less trivial, between $qH^{\ast }(\limfunc{Gr}%
_{n,n+k})$ and the $\widehat{\mathfrak{sl}}(n)_{k}$\ Verlinde algebra. In \cite%
{ckcs} this relationship has been made precise (see Theorem \ref{quotient}
below) using an explicit combinatorial description of both rings in terms of
noncommutative Schur polynomials. Here we shall give an alternative formulation in terms of the structure constants of both rings using a simple combinatorial recipe. Again, only the main results are summarized here, the proofs will be presented in Section \ref{sec:projection}.\\

Recall the definition of the $\widehat{\mathfrak{sl}}(n)_{k}$ Verlinde
algebra: denote by%
\begin{equation}
P_{k}^{+}=\left\{ \hat{\lambda}=\sum_{i=0}^{n-1}m_{i}\hat{\omega}%
_{i}\;\left\vert \;\sum_{i=0}^{n-1}m_{i}=k\right. \!,\;m_{i}\in \mathbb{Z}%
_{\geq 0}\right\}  \label{domweights}
\end{equation}%
the set of all \emph{dominant integral weights of level} $k$, where the $%
\hat{\omega}_{i}$'s denote the fundamental affine weights of the affine Lie algebra $\widehat{\mathfrak{sl}}(n)$; see \cite{Kac} for details. Consider the free abelian group (with respect to
addition) generated by $P_{k}^{+}$ and introduce the so-called fusion product%
\begin{equation}
\hat{\lambda}\ast \hat{\mu}=\sum_{\hat{\nu}\in P_{k}^{+}}\mathcal{N}_{\hat{%
\lambda}\hat{\mu}}^{(k)\hat{\nu}}\hat{\nu},  \label{fusiondef}
\end{equation}%
where the structure constants $\mathcal{N}_{\hat{\lambda}\hat{\mu}}^{(k)\hat{%
\nu}}\in \mathbb{Z}_{\geq 0}$, called \emph{fusion coefficients}, can be
explicitly computed from the Verlinde formula \cite{Verlinde} (see equation %
\eqref{Verlinde} below) and equal the dimension of certain moduli spaces of
generalised $\theta $-functions; see \cite{Beauville} and references therein
for details. We shall denote the resulting unital, commutative and
associative algebra by $V_{k}(\widehat{\mathfrak{sl}}(n);\mathbb{C)}$ and
refer to $V_{k}(\widehat{\mathfrak{sl}}(n);\mathbb{Z)}$ as the \emph{fusion
ring}.\\

Product expansions in the quantum cohomology ring $qH^{\ast }(\limfunc{Gr}_{n,n+k})$ can be projected on to product expansions in the fusion ring. For this one needs to identify basis elements in both rings which requires the notion of column and row reduction of
partitions: for \emph{any} $n,k\in\mathbb{N}$ introduce the following two
maps
\begin{equation}
^{\prime }:\mathfrak{P}_{\leq n,k}\rightarrow \mathfrak{P}_{\leq
n-1,k}~,\;\;\lambda \mapsto \lambda ^{\prime }\quad \text{and}\quad ^{\prime
\prime }:\mathfrak{P}_{\leq n,k}\rightarrow \mathfrak{P}_{\leq
n,k-1}~,\;\;\lambda \mapsto \lambda ^{\prime \prime },  \label{partred}
\end{equation}%
where $\lambda ^{\prime }$ is the partition obtained by removing all columns
of maximal height (here $n$) from the Young diagram of $\lambda $ and $%
\lambda ^{\prime\prime }$ is the partition obtained after deleting all rows
of maximal length (here $k$). Obviously, we have $(\lambda ^{\prime
})^{t}=(\lambda ^{t})^{\prime \prime }$.

Furthermore, we observe that the set $P_{k}^{+}$ is in one-to-one
correspondence with the partitions $\mathfrak{P}_{\leq n-1,k}$ whose Young
diagram fits into the $(n-1)\times k$ bounding box. Namely, one defines a
bijection $P_{k}^{+}\longrightarrow \mathfrak{P}_{\leq n-1,k}$ by setting
\begin{equation}
\hat{\lambda}\longmapsto \lambda =(\lambda _{1},\ldots ,\lambda
_{n-1},0,0,\ldots )\quad \text{with}\quad \lambda _{i}-\lambda _{i+1}=m_{i}~,
\label{weight2part}
\end{equation}%
where $m_i$ is the so-called Dynkin label, i.e. the coefficient of the $i^{\text{th}}$ fundamental weight in \eqref{domweights}. Vice versa, given a partition $\lambda \in \mathfrak{P}_{\leq n-1,k}$ we
shall denote by $\hat{\lambda}$ the corresponding affine weight in $%
P_{k}^{+} $.

\begin{proposition}[Projection of Gromov-Witten invariants]
\label{mainprop}Let $\lambda ,\mu ,\nu \in \mathfrak{P}_{\leq n,k}$ and
denote by $\hat{\lambda}^{\prime },\hat{\mu}^{\prime },\hat{\nu}^{\prime
}\in P_{k}^{+}$ the inverse images of $\lambda ^{\prime },\mu ^{\prime },\nu
^{\prime }\in \mathfrak{P}_{\leq n-1,k}$ under the bijection (\ref%
{weight2part}). Then one has the following identity between the associated
Gromov-Witten invariant and fusion coefficient%
\begin{equation}
C_{\lambda \mu }^{\nu ,d}=\mathcal{N}_{\hat{\lambda}^{\prime }\hat{\mu}%
^{\prime }}^{(k),\limfunc{rot}^{d}(\hat{\nu}^{\prime })}\;,  \label{curious}
\end{equation}%
where $\func{rot}:P_{k}^{+}\rightarrow P_{k}^{+}$ is the $\widehat{\mathfrak{%
sl}}(n)$-Dynkin diagram automorphism of order $n$. Employing the second map in (\ref{partred}) the analogous equality holds for the $\widehat{%
\mathfrak{sl}}(k)_{n}$ fusion coefficient.
\end{proposition}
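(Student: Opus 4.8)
The plan is to derive \eqref{curious} from the realisation of the $\widehat{\mathfrak{sl}}(n)_{k}$ Verlinde algebra as a quotient of the quantum cohomology ring, that is, from Theorem \ref{quotient}. The latter supplies a surjective $\mathbb{C}$-algebra homomorphism $p\colon qH^{\ast}(\func{Gr}_{n,N})\to V_{k}(\widehat{\mathfrak{sl}}(n);\mathbb{C})$ which, at the level of the distinguished bases, is column reduction followed by the bijection \eqref{weight2part}, $p(\lambda)=\hat{\lambda}^{\prime}$, and which sends the deformation parameter to the simple current $J=k\hat{\omega}_{1}\in P_{k}^{+}$. I will use the two standard facts that $J$ has order $n$ in the fusion ring and that fusion by $J$ realises the Dynkin diagram automorphism, so that $J^{\ast d}\ast\hat{\sigma}=\func{rot}^{d}(\hat{\sigma})$ for all $\hat{\sigma}\in P_{k}^{+}$.

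First I would apply $p$ to the Gromov--Witten expansion of Theorem \ref{main}(ii); multiplicativity of $p$ together with the facts above gives, inside $V_{k}(\widehat{\mathfrak{sl}}(n);\mathbb{C})$,
\begin{equation*}
\hat{\lambda}^{\prime}\ast\hat{\mu}^{\prime}=p(\lambda\star\mu)=\sum_{d}\sum_{\nu}C_{\lambda\mu}^{\nu,d}\,\func{rot}^{d}(\hat{\nu}^{\prime})\,.
\end{equation*}
Comparing the coefficient of a fixed $\hat{\sigma}\in P_{k}^{+}$ on both sides, the left-hand side contributes the fusion number $\mathcal{N}_{\hat{\lambda}^{\prime}\hat{\mu}^{\prime}}^{(k)\hat{\sigma}}$, while the right-hand side contributes $\sum C_{\lambda\mu}^{\nu,d}$, the sum extending over all pairs $(\nu,d)$ with $\nu\in\mathfrak{P}_{\leq n,k}$, $Nd=|\lambda|+|\mu|-|\nu|\geq 0$ and $\func{rot}^{d}(\hat{\nu}^{\prime})=\hat{\sigma}$. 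Thus \eqref{curious} reduces to the statement that, for fixed $\lambda$ and $\mu$, at most one such pair lies over each $\hat{\sigma}$.

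Proving this uniqueness is the main obstacle. If $(\nu_{1},d_{1})$ and $(\nu_{2},d_{2})$ both satisfy the listed conditions for the same $\hat{\sigma}$ and $d_{1}=d_{2}$, they coincide, because $\nu\mapsto\hat{\nu}^{\prime}$ is injective on partitions of a prescribed size (one recovers $\nu$ from $\nu^{\prime}$ and $|\nu|$). If instead $e:=d_{1}-d_{2}\geq 1$, then $\hat{\nu}_{1}^{\prime}=\func{rot}^{e}(\hat{\nu}_{2}^{\prime})$ and $|\nu_{1}|-|\nu_{2}|=Ne$, and I would exclude this by a box-size estimate. Writing $\nu_{i}=\nu_{i}^{\prime}+((\nu_{i,n})^{n})$ and using the explicit form $\func{rot}(\rho)=(k-\rho_{n-1},\rho_{1}-\rho_{n-1},\dots,\rho_{n-2}-\rho_{n-1})$ of the rotation on partitions, the degree relation $|\nu_{1}|-|\nu_{2}|=Ne$ determines $\nu_{1,n}$ in terms of $\nu_{2,n}$ and the reduced partitions, and the resulting value is too large for $\nu_{1}$ to fit in the $n\times k$ box; this is a short inequality check that goes through uniformly in $n$ and $k$, and it also covers the weights $\hat{\sigma}$ with nontrivial $\func{rot}$-stabiliser (which occur when $n\mid k$).

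Granting uniqueness, the coefficient comparison becomes the term-by-term identity $C_{\lambda\mu}^{\nu,d}=\mathcal{N}_{\hat{\lambda}^{\prime}\hat{\mu}^{\prime}}^{(k),\func{rot}^{d}(\hat{\nu}^{\prime})}$, which is \eqref{curious}. The $\widehat{\mathfrak{sl}}(k)_{n}$ statement then follows formally from the level-rank identification $\func{Gr}_{n,n+k}\cong\func{Gr}_{k,n+k}$, $\lambda\leftrightarrow\lambda^{t}$, under which the Gromov--Witten invariants are unchanged, column reduction turns into row reduction via $(\lambda^{\prime})^{t}=(\lambda^{t})^{\prime\prime}$, and $\func{rot}$ for $\widehat{\mathfrak{sl}}(n)$ corresponds to $\func{rot}$ for $\widehat{\mathfrak{sl}}(k)$: applying the case already proved to $\func{Gr}_{k,n+k}$ and transposing gives the assertion. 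As an independent cross-check one may alternatively expand both sides into Kostka numbers --- the left-hand side by Corollary \ref{maincor}, the right-hand side by the Kac--Walton/Verlinde formula --- and match the finite-Weyl-group sum over $S_{n}$ against the affine one, the translation part of $\widehat{W}$ being pinned down by the condition $d(\pi)=d$.
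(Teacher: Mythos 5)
Your argument is correct, but it takes a genuinely different route from the paper's. The paper proves \eqref{curious} analytically: it splits the Bertram--Vafa--Intriligator sum \eqref{BVI} into the part with $\sigma \in \mathfrak{P}_{\leq n-1,k}$ and the part with $\ell(\sigma)=n$, identifies each piece with a Verlinde sum to obtain the convex combination \eqref{GWdecomp} of an $\widehat{\mathfrak{sl}}(n)_k$ and an $\widehat{\mathfrak{sl}}(k)_n$ fusion coefficient, and then shows the two coefficients coincide via rotation invariance and level--rank duality (Proposition \ref{fusionsymm}). You instead push the product expansion of Theorem \ref{main} through the quotient map coming from \eqref{VQiso} and use the simple-current property $h_k^{d}\ast\hat{\sigma}=\func{rot}^{d}(\hat{\sigma})$; the paper itself notes, in the remark following its proof of \eqref{VQiso}, that this line of reasoning gives an alternative proof of Proposition \ref{mainprop}, so your route is legitimate --- and arguably more conceptual, since it makes transparent why whole product expansions project consistently, whereas the paper's route has the bonus of producing the intermediate identity \eqref{GWdecomp}. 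Two points need care. First, \eqref{VQiso} is only established later in Section \ref{sec:projection}; its proof does not use \eqref{curious}, so there is no circularity, but you should say so. Second, the entire weight of your argument rests on the uniqueness claim that you dismiss as ``a short inequality check''; it is the crux and should be written out. It does hold: with $\nu_i=\nu'_{(i)}+ (j_i^{\,n})$, $\nu'_{(2)}=\func{rot}^{e}(\nu'_{(1)})$ and $r_a$ denoting the last part of $\func{rot}^{a}(\nu'_{(1)})$, the identity $|\func{rot}(\rho)|=|\rho|+k-n\rho_{n-1}$ gives $j_2-j_1=e+\sum_{a=0}^{e-1}r_a$, while admissibility of $\nu_2$ in the $n\times k$ box forces $j_2\leq k-(\nu'_{(2)})_1=r_{e-1}$; combining the two yields $j_1+e+\sum_{a=0}^{e-2}r_a\leq 0$, impossible for $e\geq 1$. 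Granting that lemma, your coefficient comparison and the transposition argument for the $\widehat{\mathfrak{sl}}(k)_n$ statement both go through.
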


Thus, according to formula (\ref{curious}) we can compute from the product expansion of $\lambda\star\mu$ in the quantum cohomology ring the product expansion of $\hat\lambda'\ast\hat\mu'$ in the fusion ring by simply deleting columns of height $n$ in the associated Young diagrams and then `rotating' each term in the expansion with the Dynkin diagram automorphism $\func{rot}$; see Example \ref{projectionex} in the text. Note that this is a genuine projection as products of partitions in $qH^{\ast }(\limfunc{Gr}%
_{n,n+k})$ which differ only by $n$-columns are mapped onto the same products in the fusion ring. In fact, $\func{dim} qH^{\ast }(\limfunc{Gr}_{n,n+k})={n+k \choose n}$, while  $\func{dim} V_{k}(\widehat{\mathfrak{sl}}(n);\mathbb{Z})={n+k-1\choose n-1}$. Moreover, the identity \eqref{curious} implies via (\ref{fermi product})
and (\ref{GWKostka}) a modified Fermion product formula for $V_{k}(\widehat{%
\mathfrak{sl}}(n);\mathbb{Z)}$ and an expression for the fusion coefficients
in terms of Kostka numbers.\\
\medskip

The article is organized as follows: for the benefit of the reader Section \ref{sec:reminder} reviews the definition of the quantum cohomology ring and states the precise relationship with the Verlinde algebra presented as a quotient in the ring of symmetric functions. Section \ref{sec:fermion} contains the proof of the new product formula \eqref{fermi product}, i.e. Theorem \ref{main}. We discuss explicit examples where we compare with the rim hook and other known algorithms in the literature. In Section \ref{sec:projection} we derive the projection formula \eqref{curious} using the Bertram-Vafa-Intrilligator and Verlinde formula for Gromov-Witten invariants and fusion coefficients, respectively. We also discuss how product expansions in the fusion ring can be `lifted' to the quantum cohomology ring. First `lifting' and then projecting we demonstrate how recursion formulae for Gromov-Witten invariants derived in \cite{ckcs} lead to analogous relations for the recursive computation of fusion coefficients. Again explicit examples are presented to illustrate the general formulae.\\
\medskip

{\bf Acknowledgement}: The author would like to thank Alastair Craw for many helpful discussions and Catharina Stroppel for a previous collaboration.

\section{Reminder: Quantum Cohomology and Gromov-Witten Invariants}\label{sec:reminder}
  Starting with the non-deformed cohomology ring we briefly recall the definition of the quantum cohomology ring; for details and references see e.g. \cite{Bertram}, \cite{Buch}, \cite{Tamvakis}.
Fix a standard flag, $\mathbb{C}^{1}\subset \mathbb{C}^{2}\subset \cdots \subset
\mathbb{C}^{N}$, then a basis of $H^{\ast }(\func{Gr}_{n,N})$ is given in terms of Schubert
classes $[\Omega _{\lambda }]$ which are the fundamental cohomology classes
of the Schubert varieties
\begin{equation}
\Omega _{\lambda }=\left\{ V\in \func{Gr}_{n,N}~|~\dim (V\cap \mathbb{C}%
^{k+i-\lambda _{i}})\geq i,\;i=1,2,\ldots n\right\} ,
\end{equation}%
where $\lambda =(\lambda _{1}\geq \lambda _{2}\geq \cdots \geq \lambda _{n})$
is a partition whose associated Young diagram fits into a $n\times k$
rectangle with $k=N-n$ as before. We shall identify partitions with their Young diagrams and denote
this set by $\mathfrak{P}_{\leq n,k}$. Within the basis of Schubert classes
the multiplication in $H^{\ast }(\func{Gr}_{n,N})$ is determined through the
product expansion%
\begin{equation}
\lbrack \Omega _{\lambda }]\cup \lbrack \Omega _{\mu }]=\sum_{\nu \in
\mathfrak{P}_{\leq n,k}}c_{\lambda \mu }^{\nu }[\Omega _{\nu }],\qquad
c_{\lambda \mu }^{\nu }:=\langle \Omega _{\lambda },\Omega _{\mu },\Omega
_{\nu ^{\vee }}\rangle ,
\end{equation}%
where the structure constants are the intersection numbers of the
corresponding Schubert varieties $\Omega _{\lambda },\Omega _{\mu },\Omega
_{\nu ^{\vee }}$ and $\nu ^{\vee }$ denotes the complement $(k-\nu
_{n},\ldots ,k-\nu _{1})~$of $\nu $ in the $n\times k$ rectangle. The
non-negative integers $c_{\lambda \mu }^{\nu }$ coincide with the celebrated
\emph{Littlewood-Richardson coefficients}. In particular, the map $[\Omega
_{\lambda }]\mapsto s_{\lambda}$, where $s_\lambda$ is the Schur polynomial in the ring of symmetric functions, provides a ring isomorphism%
\begin{equation}
H^{\ast }(\func{Gr}_{n,N})\cong \mathbb{Z}[e_{1},\ldots ,e_{n}]/\langle
h_{k+1},\ldots ,h_{n+k-1}\rangle
\end{equation}%
with $e_r=s_{(1^r)}$ and $h_r=s_{(r)}$ denoting the elementary and complete symmetric polynomial of degree $r$; see e.g. \cite{MacDonald} for the definitions of the mentioned symmetric functions. The quotient condition
$h_{k+1}=\cdots =h_{n+k-1}=0$ ensures that $s_{\nu }=0$ if $\nu \notin
\mathfrak{P}_{\leq n,k}$. Further details can be found in e.g. \cite%
{FultonYT}.

The (\emph{small}) \emph{quantum cohomology ring }$qH^{\ast }(\func{Gr}%
_{n,N})$ is isomorphic to $\mathbb{Z}[q]\otimes _{\mathbb{Z}}H^{\ast }(%
\limfunc{Gr}\nolimits_{n,k})$ as a $\mathbb{Z}[q]$-module, where $q$ is a
variable of degree $N=n+k$. Set $\sigma _{\lambda }=1\otimes \lbrack \Omega
_{\lambda }]$ and define the ring structure now with respect to the `$q$%
-deformed' product%
\begin{equation}
\sigma _{\lambda }\star \sigma _{\mu }=\sum_{d}\sum_{\nu \in \mathfrak{P}%
_{\leq n,k}}q^{d}C_{\lambda \mu }^{\nu ,d}\sigma _{\nu },  \label{*product}
\end{equation}%
where $C_{\lambda \mu }^{\nu ,d}=\left\langle \Omega _{\lambda },\Omega
_{\mu },\Omega _{\nu ^{\vee }}\right\rangle _{d}$ are the three-point, genus
zero Gromov-Witten invariants which count the number of rational curves of
finite degree $d$ intersecting generic translates of the Schubert varieties
specified by the partitions $\lambda ,\mu $ and $\nu ^{\vee }$. One can show
that $C_{\lambda \mu }^{\nu ,d}=0$ unless $|\lambda |+|\mu |-|\nu |=dN$. As
in the non-deformed case there exists also here a presentation in the ring
of symmetric functions which is due to Siebert and Tian \cite{ST},%
\begin{equation}
qH^{\ast }(\func{Gr}_{n,N})\cong \left( \mathbb{Z}[q]\otimes _{\mathbb{Z}}%
\mathbb{Z}[e_{1},\ldots ,e_{n}]\right) /\langle h_{k+1},\ldots
,h_{n+k-1},h_{n+k}+(-1)^{n}q\rangle \;.  \label{STiso}
\end{equation}%
Again one identifies $\sigma _{\lambda }\mapsto s_{\lambda }$ under this
isomorphism. Setting $q=0$ one recovers the non-deformed cohomology ring $%
H^{\ast }(\func{Gr}_{n,N})$, i.e. the Gromov-Witten invariants specialise
for $d=0$ to the intersection numbers of the respective Schubert varieties.

There is a close relationship between the quantum cohomology ring and the $\widehat{\mathfrak{sl}}(n)_k$-Verlinde algebra which can be stated as follows:
\begin{theorem}
\label{quotient} Employing the presentation \eqref{STiso} of the quantum
cohomology ring and $qH^{\ast }(\func{Gr}_{n,N})\cong qH^{\ast }(\func{Gr}_{k,N})$  one has the ring isomorphisms \cite[Theorem 1.3]{ckcs},
\begin{eqnarray}
V_{k}(\widehat{\mathfrak{sl}}(n);\mathbb{Z})&\cong& qH^{\ast }(\limfunc{Gr}%
\nolimits_{n,n+k})/\langle e_{n}-1,q-h_{k}\rangle\label{VQiso}\\
&\cong& qH^{\ast }(\limfunc{Gr}%
\nolimits_{k,n+k})/\langle h_{n}-1,q-e_{k}\rangle  \;.  \label{VQisodual}
\end{eqnarray}
\end{theorem}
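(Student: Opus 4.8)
The statement is \cite[Theorem~1.3]{ckcs}; as signalled in its formulation, the plan is to combine the Siebert--Tian presentation \eqref{STiso} with the isomorphism $qH^{\ast}(\func{Gr}_{n,N})\cong qH^{\ast}(\func{Gr}_{k,N})$ and thereby reduce the claim to a comparison of defining ideals of quotients of a polynomial ring; the one genuine manipulation is a single instance of Newton's identity.

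First I would recall the classical symmetric-function description of the fusion ring. Writing $e_i=[\Lambda^i\mathbb{C}^n]$, the representation ring is $R(\mathfrak{sl}(n))=\mathbb{Z}[e_1,\ldots,e_{n-1}]$ (so $e_n=1$ and $e_r=0$ for $r>n$), and
\[
V_k(\widehat{\mathfrak{sl}}(n);\mathbb{Z})\;\cong\;\mathbb{Z}[e_1,\ldots,e_{n-1}]\big/\big\langle h_{k+1},h_{k+2},\ldots,h_{k+n-1}\big\rangle ,
\]
the $h_r$ being the complete homogeneous symmetric polynomials written through the $e_i$. If one prefers not to quote this, it is the Gepner-type statement, proved in the usual way: after $\otimes\,\mathbb{C}$ the quotient is semisimple, its spectrum is exactly the level-$k$ points, at which $h_{k+1},\ldots,h_{k+n-1}$ vanish with nondegenerate Jacobian, and the Verlinde formula identifies its structure constants with the fusion coefficients; counting partitions in the $(n-1)\times k$ box then fixes the dimension $\binom{n+k-1}{n-1}$.

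Granting this, I would impose the two relations directly on \eqref{STiso}. Setting $e_n=1$ drops $e_n$ as a generator, and substituting $q=h_k$ eliminates the indeterminate $q$, so that
\[
qH^{\ast}(\func{Gr}_{n,N})\big/\langle e_n-1,\,q-h_k\rangle\;\cong\;\mathbb{Z}[e_1,\ldots,e_{n-1}]\big/\big\langle h_{k+1},\ldots,h_{n+k-1},\,h_{n+k}+(-1)^n h_k\big\rangle ,
\]
all $h_r$ now evaluated at $e_n=1$. Since $n+k-1=k+n-1$, the first $n-1$ generators are precisely those of the fusion ideal above, so it only remains to show that $h_{n+k}+(-1)^n h_k$ lies in $\langle h_{k+1},\ldots,h_{n+k-1}\rangle$. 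This is Newton's identity: with $e_n=1$ and $e_r=0$ for $r>n$ one has
\[
h_{n+k}-e_1 h_{n+k-1}+e_2 h_{n+k-2}-\cdots+(-1)^{n-1}e_{n-1}h_{k+1}+(-1)^n h_k=0 ,
\]
and every intermediate summand carries one of $h_{k+1},\ldots,h_{n+k-1}$ as a factor; hence $h_{n+k}+(-1)^n h_k\equiv 0$ modulo the fusion ideal. Together with the previous paragraph this gives \eqref{VQiso} as an isomorphism of commutative unital rings.

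For \eqref{VQisodual} I would transport \eqref{VQiso} along the isomorphism $qH^{\ast}(\func{Gr}_{n,N})\cong qH^{\ast}(\func{Gr}_{k,N})$, $\sigma_\lambda\mapsto\sigma_{\lambda^{t}}$. On Schubert classes inside the box it sends $\sigma_{(1^i)}\mapsto\sigma_{(i)}$ and $\sigma_{(i)}\mapsto\sigma_{(1^i)}$, hence (using that $\sigma_{(m)}$ equals the symmetric function $h_m$ for $m\le n+k-1$) it identifies $e_n\mapsto h_n$ and $h_k\mapsto e_k$ in the respective presentations \eqref{STiso}, while $q\mapsto q$ up to its conventional sign. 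Therefore $\langle e_n-1,\,q-h_k\rangle$ goes to $\langle h_n-1,\,q-e_k\rangle$, which yields \eqref{VQisodual}; equivalently one may rerun the Newton-identity step of the previous paragraph on the Siebert--Tian presentation of $qH^{\ast}(\func{Gr}_{k,N})$. The only delicate point is the sign bookkeeping in this last identification, while the sole substantive input is the classical presentation of the fusion ideal recalled above — and the genuine obstacle, if one does not simply cite it, is establishing that very presentation.
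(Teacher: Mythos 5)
Your argument is correct, but it runs along a genuinely different track from the paper's. The paper takes \eqref{VQisodual} as the known statement (\cite[Theorem 6.20]{ckcs}) and proves \eqref{VQiso} geometrically: it shows the ideal $\langle h_{k+1},\ldots,h_{n+k-1},h_{n+k}+(-1)^{n}q,e_{n}-1,q-h_{k}\rangle$ is radical, identifies its zero locus with the solutions $y_{\sigma}$ of the Bethe Ansatz equations $y_{i}^{n+k}=(-1)^{n-1}h_{k}(y)$ indexed by $\sigma\in\mathfrak{P}_{\leq n-1,k}$, and then invokes the Nullstellensatz together with Lemma \ref{technical} to transfer the statement to \eqref{VQisodual}. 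You instead quote the Gepner presentation $V_{k}(\widehat{\mathfrak{sl}}(n);\mathbb{Z})\cong\mathbb{Z}[e_{1},\ldots,e_{n-1}]/\langle h_{k+1},\ldots,h_{k+n-1}\rangle$ and observe that after setting $e_{n}=1$, $q=h_{k}$ in \eqref{STiso} the only new generator $h_{n+k}+(-1)^{n}h_{k}$ is already redundant by the Newton identity $\sum_{i=0}^{n}(-1)^{i}e_{i}h_{n+k-i}=0$; that computation is correct and is an attractive, purely algebraic explanation of why the specialisation kills exactly the right relation. What your route buys is brevity and transparency over $\mathbb{Z}$ \emph{once the Gepner presentation is granted}; what the paper's route buys is independence from that external input and, more importantly, the explicit evaluation points $y_{\sigma}$, which are reused immediately afterwards to derive the projection formula \eqref{curious} and the dual rim-hook-type algorithms. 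Two caveats on your version: your sketch of the Gepner presentation (semisimplicity, spectrum, Verlinde formula, dimension count) only establishes it after tensoring with $\mathbb{C}$, and integrality requires the additional standard fact that the Schur polynomials $s_{\lambda}$, $\lambda\in\mathfrak{P}_{\leq n-1,k}$, form a $\mathbb{Z}$-basis of the quotient; and the sign of $q$ under the transpose isomorphism $qH^{\ast}(\func{Gr}_{n,N})\cong qH^{\ast}(\func{Gr}_{k,N})$, which you leave as ``conventional,'' does need to be pinned down for \eqref{VQisodual} (the paper avoids this by citing \cite{ckcs} for that half directly). Neither caveat is fatal, but both should be made explicit if your argument is to stand on its own.
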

 In Section \ref{sec:projection} we will relate these isomorphisms to concrete algorithms for the computation of fusion coefficients. While the isomorphism \eqref{VQiso} has not explicitly been stated in \cite{ckcs} it is implicit in the results therein; we will discuss its proof also in Section \ref{sec:projection}.

\section{Proof and example of the fermion product formula}\label{sec:fermion}
The proof of Theorem \ref{main} is straightforward, however, it requires several known results which are recalled first.

For $N\geq 2$ define the following $N$-letter noncommutative alphabet $%
\subset \limfunc{End}\mathfrak{F}_{N}[q]$,%
\begin{equation}
u_{i}=\psi _{i+1}^{\ast }\psi _{i}~,\;\;i=1,\ldots ,N-1\quad \text{and}\quad
u_{N}=(-1)^{\limfunc{n}-1}q\psi _{1}^{\ast }\psi _{N}\;,
\end{equation}
then one has the following result \cite[Propostion 9.1]{ckcs}:
\begin{proposition}
The (noncommutative) subalgebra in $\limfunc{End}\mathfrak{F}_{N}[q]$
generated by the $u_{i}$'s provides a faithful representation of the affine
nil-Temperley-Lieb algebra. That is, the following relations hold
\begin{equation}
u_{i}^{2}=u_{i}u_{i+1}u_{i}=u_{i+1}u_{i}u_{i+1}=0,\qquad
u_{i}u_{j}=u_{j}u_{i}\text{\quad if }|i-j|>1\func{mod}N\;,
\end{equation}%
where all indices are understood modulo $N$.
\end{proposition}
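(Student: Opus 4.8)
The statement has two independent parts — that the images of the $u_i$ satisfy the affine nil-Temperley-Lieb relations, and that the resulting representation on $\mathfrak{F}_N[q]$ is faithful — and I would treat them separately. For the relations I would argue entirely inside the Clifford algebra, using only \eqref{Cliff relations}: $\psi_j^2=(\psi_j^{\ast})^2=0$, anticommutativity $\psi_a\psi_b=-\psi_b\psi_a$, $\psi_a^{\ast}\psi_b^{\ast}=-\psi_b^{\ast}\psi_a^{\ast}$ and $\psi_a\psi_b^{\ast}=-\psi_b^{\ast}\psi_a$ for $a\neq b$, and the one anticommutator $\psi_a\psi_a^{\ast}+\psi_a^{\ast}\psi_a=1$. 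Writing $u_i=\psi_{i+1}^{\ast}\psi_i$, the vanishing of $u_i^2$ and of $u_iu_{i\pm1}u_i$ reduces to pushing the interior operators through one another, resolving the single unavoidable factor $\psi_{i+1}\psi_{i+1}^{\ast}$ into $1-\psi_{i+1}^{\ast}\psi_{i+1}$, and noting that in \emph{each} of the two resulting terms a repeated $\psi_j^2$ or $(\psi_j^{\ast})^2$ — typically in an outer slot — survives and kills the term; far-commutativity $u_iu_j=u_ju_i$ for $|i-j|>1\bmod N$ is immediate since then the four operators have pairwise distinct indices, so swapping the two monomials costs $(-1)^4=1$. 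For $u_N=(-1)^{\limfunc{n}-1}q\,\psi_1^{\ast}\psi_N$ the identical computations apply: $q$ is central and $\psi_1^{\ast}\psi_N$ is particle-number preserving, so $(-1)^{\limfunc{n}-1}$ commutes past it, and in any product of $u$'s these prefactors merely collect into an overall scalar times a power of $q$, which cannot affect vanishing (one uses $N\ge 2$, and $N\ge 3$ for the braid-type relations involving $u_N$, so that the relevant indices stay distinct).

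For faithfulness I would first invoke the structure theory of the abstract affine nil-Temperley-Lieb algebra $U_N$: by the theory of fully commutative elements of the affine symmetric group $\widehat{S}_N$, a $\mathbb{C}$-basis of $U_N$ is given by the monomials $\bar{u}_w=\bar{u}_{i_1}\cdots\bar{u}_{i_\ell}$ attached to fully commutative $w=s_{i_1}\cdots s_{i_\ell}\in\widehat{S}_N$ (well defined, since any two reduced words of such a $w$ differ only by commutations), while every non-reduced or non-fully-commutative product of generators is zero. It therefore suffices to prove that the operators $u_w$, $w\in\widehat{S}_N^{\mathrm{fc}}$, are linearly independent in $\limfunc{End}\mathfrak{F}_N[q]$. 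For this I would exploit the transparent action on $01$-words: for $i<N$, $u_i$ moves a single $1$-letter from site $i$ to an empty site $i+1$ (annihilating the word otherwise), and $u_N$ performs the cyclic move $N\to1$ at the cost of one factor of $q$ (up to the scalar above). Fully commutative affine permutations are precisely those realised by non-crossing families of such cyclic hops, so to each $w$ one attaches a ``source'' $01$-word $v_w$ on which the hops dictated by a chosen reduced word of $w$ all succeed, giving $u_w\,v_w=\pm q^{d(w)}\,v_w'$ with $v_w'\neq0$, and such that the triple $(v_w,v_w',d(w))$ recovers $w$. Feeding these test vectors into a hypothetical relation $\sum_w c_w u_w=0$ and separating by the power of $q$ and by the source--target pair then forces all $c_w=0$.

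The genuinely non-routine step is faithfulness, and inside it the combinatorial core: one needs the monomial basis of $U_N$ indexed by affine fully commutative elements together with a bookkeeping device — the cylindric (abacus) description of such elements — making the assignment $w\mapsto(v_w,v_w',d(w))$ injective, so that no cancellation among leading terms can occur. Choosing the source words $v_w$ coherently and organising the comparison is where the work concentrates. In the present article the proposition is quoted from \cite[Proposition 9.1]{ckcs}, where these details are carried out.
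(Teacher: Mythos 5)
The paper itself gives no argument for this proposition --- it is quoted verbatim from \cite[Proposition 9.1]{ckcs} --- so your proposal is being compared against a bare citation rather than a written proof. Your verification of the relations is correct and complete: $u_i^2=0$ follows from anticommuting $\psi_i$ past $\psi_{i+1}^{\ast}$ and using $(\psi_{i+1}^{\ast})^2=0$; in $u_iu_{i\pm 1}u_i$ the single contraction $\psi_{i+1}\psi_{i+1}^{\ast}=1-\psi_{i+1}^{\ast}\psi_{i+1}$ produces two terms each of which contains a repeated $\psi_j$ or $\psi_j^{\ast}$ once $i$, $i+1$, $i+2$ are distinct mod $N$; far-commutativity costs $(-1)^4$; and the prefactor $(-1)^{\operatorname{n}-1}q$ of $u_N$ is central on each particle-number sector, so it never interferes. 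Your caveat that the braid-type relations require $N\geq 3$ is well taken --- for $N=2$ one indeed computes $u_1u_2u_1\propto\psi_2^{\ast}\psi_1\neq 0$, so the proposition as literally stated needs that restriction, a point the paper glosses over. For faithfulness your strategy is the standard and correct one (monomial basis of the affine nil-Temperley-Lieb algebra indexed by fully commutative elements of $\widehat{S}_N$, then separation of the operators $u_w$ by source word, target word and power of $q$ on suitable test vectors in $\mathfrak{F}_{n,N}[q]$), but it remains a plan: the injectivity of $w\mapsto(v_w,v_w',d(w))$ and the coherent choice of source words are asserted rather than proved, and you ultimately defer to \cite{ckcs} for them --- which is exactly what the paper does, so nothing is lost relative to the text under review, though a self-contained proof would still require that combinatorial work.
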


We now introduce special commuting elements in $%
\limfunc{End}\mathfrak{F}_{N}[q]$ which correspond to the elementary and
complete symmetric functions in the noncommutative alphabet $\mathcal{U}%
=\{u_{1},...,u_{N}\}$; compare with \cite{Postnikov} and \cite[Definition 9.4]{ckcs}

\begin{definition}[noncommutative symmetric polynomials]
For $r=1,2,...,N-1$ let%
\begin{equation}
\boldsymbol{e}_{r}=\sum_{|I|=r}\tprod_{i\in I}^{\circlearrowleft }u_{i}\text{%
\qquad and\qquad }\boldsymbol{h}_{r}=\sum_{|I|=r}\tprod_{i\in
I}^{\circlearrowright }u_{i},  \label{ncfunctions}
\end{equation}%
where $\tprod_{i\in I}^{\circlearrowright }u_{i}$ is the clockwise ordered
product of the letters $u_{i}$ such that if $i,i+1\in I$ the letter $u_{i+1}$
appears before $u_{i}$. The counterclockwise product $\tprod_{i\in
I}^{\circlearrowleft }u_{i}$ is obtained by reversing the previous cyclic
order. We also set $\boldsymbol{h}_{N}=(-1)^{\limfunc{n}-1}q$ and $%
\boldsymbol{e}_{N}|_{\mathfrak{F}_{n,N}[q]}=0$ except when $n=N$, where $%
\boldsymbol{e}_{N}|_{\mathfrak{F}_{N,N}[q]}=q$.
\end{definition}

In order to prove Theorem \ref{main} we will make use of the following
Proposition and Theorem which originally are due to Postnikov \cite%
{Postnikov}. An alternative proof of these facts using the particle picture
and the associated Clifford algebra can be found in \cite[Part II]{ckcs}.

\begin{proposition}
The elements in the set $\{\boldsymbol{e}_{r},\boldsymbol{h}_{s}\}$ pairwise
commute. Thus, the noncommutative Schur polynomials defined via the
(equivalent) determinant formulae
\begin{equation}
\boldsymbol{s}_{\lambda }=\det (\boldsymbol{e}_{\lambda
_{i}^{t}-i+j})_{1\leq i,j\leq N}=\det (\boldsymbol{h}_{\lambda
_{i}-i+j})_{1\leq i,j\leq N}\ .  \label{ncSchurdef}
\end{equation}%
satisfy all the familiar relations from the ring of commutative symmetric
functions. In particular, one has the specialisations $\boldsymbol{s}%
_{(1^{r})}=\boldsymbol{e}_{r}$ and $\boldsymbol{s}_{(r)}=\boldsymbol{h}_{r}$.
\end{proposition}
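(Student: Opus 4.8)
The plan is to reduce every assertion to the faithful action of the affine nil--Temperley--Lieb algebra on $\mathfrak{F}_{N}[q]$ recorded above, on which the generators act as ``hopping operators'': $u_{i}=\psi_{i+1}^{\ast}\psi_{i}$ moves a single fermion of a $01$-word one step to the right, returning zero when the Pauli principle blocks the move, while $u_{N}$ does the same around the marked edge at the cost of the factor prescribed in \eqref{qpbc2}. The cyclic orderings in \eqref{ncfunctions} are engineered for exactly this picture: for adjacent indices $i,i+1\in I$ the clockwise product of $\boldsymbol{h}_{r}$ multiplies them as $u_{i+1}u_{i}$, so that a block of consecutive indices acts by a single particle surfing forward, and the relations $u_{i}^{2}=u_{i}u_{i+1}u_{i}=0$ force the chosen elementary steps to be non-overlapping. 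First I would make this normal form precise, turning $\boldsymbol{h}_{r}w$ and $\boldsymbol{e}_{r}w$ into explicit signed sums over the admissible ways of advancing the fermions of $w$ by a total of $r$ steps.

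\textbf{Commutativity, the main obstacle.}
The crux is the identity $[\boldsymbol{e}_{r},\boldsymbol{h}_{s}]=[\boldsymbol{e}_{r},\boldsymbol{e}_{s}]=[\boldsymbol{h}_{r},\boldsymbol{h}_{s}]=0$, which by faithfulness need only be verified in $\limfunc{End}\mathfrak{F}_{N}[q]$. I would expand $\boldsymbol{e}_{r}\boldsymbol{h}_{s}$ as a sum over pairs of index sets $(I,J)$ with $|I|=r$ and $|J|=s$, expand $\boldsymbol{h}_{s}\boldsymbol{e}_{r}$ over the reversed pairs, normalise each monomial in the $u_{i}$ using $u_{i}^{2}=u_{i}u_{i+1}u_{i}=0$ and $u_{i}u_{j}=u_{j}u_{i}$ for $|i-j|>1\func{mod}N$, and then exhibit a sign-preserving bijection between the surviving terms on the two sides. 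Concretely, on $\mathfrak{F}_{N}[q]$ both $\boldsymbol{e}_{r}\boldsymbol{h}_{s}w$ and $\boldsymbol{h}_{s}\boldsymbol{e}_{r}w$ are manifestly the same signed sum over an $\boldsymbol{e}$-type advance of size $r$ composed with an $\boldsymbol{h}$-type advance of size $s$ of the configuration $w$, independently of the order of composition. A parallel but easier expand-and-cancel computation yields the quantum Newton relations $\sum_{a+b=m}(-1)^{a}\boldsymbol{e}_{a}\boldsymbol{h}_{b}=0$ for $1\le m\le N-1$, together with the corrected identity at $m=N$ incorporating $\boldsymbol{h}_{N}=(-1)^{\limfunc{n}-1}q$ and $\boldsymbol{e}_{N}|_{\mathfrak{F}_{n,N}[q]}=0$ for $n<N$ (and $=q$ for $n=N$). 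The delicate point throughout, and the step I expect to be the genuine obstacle, is the bookkeeping at the marked edge, where $u_{N}$ carries the operator-valued sign $(-1)^{\limfunc{n}-1}$, the parameter $q$, and a cyclic order that wraps through site $N$; one must check that these signs and powers of $q$ are respected by the bijection. Away from that edge the argument is ordinary nil--Temperley--Lieb combinatorics.

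\textbf{Conclusion.}
Granting commutativity, the elements $\boldsymbol{e}_{1},\dots,\boldsymbol{e}_{N-1}$ generate a commutative subalgebra $\mathcal{A}\subset\limfunc{End}\mathfrak{F}_{n,N}[q]$, and there is a surjective $\mathbb{Z}[q]$-algebra homomorphism $\phi\colon\mathbb{Z}[q]\otimes_{\mathbb{Z}}\mathbb{Z}[e_{1},\dots,e_{n}]\twoheadrightarrow\mathcal{A}$ with $\phi(e_{r})=\boldsymbol{e}_{r}$, and with $\phi(h_{s})=\boldsymbol{h}_{s}$ by the quantum Newton relations. Hence every matrix entry in the two determinants of \eqref{ncSchurdef} lies in $\mathcal{A}$, so $\boldsymbol{s}_{\lambda}$ is well defined; the two determinants coincide because their $\phi$-preimages are the two classical Jacobi--Trudi expressions for $s_{\lambda}$, which are equal in $\mathbb{Z}[q]\otimes_{\mathbb{Z}}\mathbb{Z}[e_{1},\dots,e_{n}]$ since $\lambda$ has at most $n$ rows; and every polynomial identity among Schur, elementary and complete symmetric functions valid there descends under $\phi$, which is exactly the assertion that the $\boldsymbol{s}_{\lambda}$ ``satisfy all the familiar relations''. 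Finally, $\boldsymbol{s}_{(1^{r})}=\boldsymbol{e}_{r}$ and $\boldsymbol{s}_{(r)}=\boldsymbol{h}_{r}$ follow by expanding the respective $N\times N$ determinant in \eqref{ncSchurdef} along a row or column whose complementary minor is unitriangular (using $\boldsymbol{e}_{0}=\boldsymbol{h}_{0}=1$ and $\boldsymbol{e}_{m}=\boldsymbol{h}_{m}=0$ for $m<0$).
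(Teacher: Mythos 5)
The paper does not actually prove this proposition: it is imported verbatim from Postnikov \cite{Postnikov}, with an alternative proof in \cite{ckcs}, so I am measuring your sketch against those sources. Your overall architecture is the standard one, and your closing paragraph is correct and complete as a \emph{reduction}: once the $\boldsymbol{e}_{r},\boldsymbol{h}_{s}$ are known to commute pairwise and to satisfy the Newton identities $\sum_{a+b=m}(-1)^{a}\boldsymbol{e}_{a}\boldsymbol{h}_{b}=\delta_{m,0}$ in the appropriate range, the surjection $\phi$ from the commutative polynomial ring pushes forward every classical identity, the two Jacobi--Trudi determinants agree, and the specialisations $\boldsymbol{s}_{(1^{r})}=\boldsymbol{e}_{r}$, $\boldsymbol{s}_{(r)}=\boldsymbol{h}_{r}$ drop out of the unitriangular cofactor expansion. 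Your reading of the $u_{i}$ as hopping operators, with $\boldsymbol{e}_{r}$ implementing a vertical-strip advance and $\boldsymbol{h}_{r}$ a horizontal-strip advance, is also accurate.

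The gap sits exactly where you place the word ``manifestly''. The assertion that $\boldsymbol{e}_{r}\boldsymbol{h}_{s}w$ and $\boldsymbol{h}_{s}\boldsymbol{e}_{r}w$ are ``manifestly the same signed sum \ldots independently of the order of composition'' is not an argument but a restatement of the claim to be proved: the two compositions range over \emph{different} sets of intermediate configurations (already classically, $e_{2}h_{2}$ and $h_{2}e_{2}$ applied to $\emptyset$ pass through the distinct intermediate shapes $(2)$ and $(1,1)$), so what is required is a weight-, sign- and $q$-preserving bijection between chains $w\to w'\to w''$ of the two types. Constructing that bijection --- with the operator-valued factor $(-1)^{\limfunc{n}-1}q$ carried by $u_{N}$ and the cyclic orderings wrapping through the marked edge --- \emph{is} Postnikov's theorem; you correctly flag the edge bookkeeping as delicate, but even away from the edge the interchange of a vertical-strip and a horizontal-strip advance is not term-by-term obvious, and the ``parallel but easier'' Newton relations are likewise asserted rather than derived. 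For comparison, the route of \cite{ckcs} sidesteps the bijection entirely: one exhibits an explicit simultaneous eigenbasis of free-fermion Bethe vectors on which $\boldsymbol{e}_{r}$ and $\boldsymbol{h}_{s}$ act by the scalars $e_{r}(y)$ and $h_{s}(y)$, and completeness of that basis delivers commutativity and all symmetric-function identities at once. As written, your proposal correctly isolates the hard core of the proposition but does not prove it.
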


\begin{theorem}[Combinatorial quantum cohomology ring]
\label{Schurprod} Fix $n\in \mathbb{Z}_{\geq 0}$ and consider the $n$%
-particle subspace $\mathfrak{F}_{n,N}[q]\subset \mathfrak{F}_{N}[q]$. The
assignment
\begin{equation}
({\lambda },{\mu })\mapsto {\lambda }\star {\mu }:=\boldsymbol{s}_{\lambda }{%
\mu }  \label{freeSchurproduct}
\end{equation}%
for basis elements $\lambda {,\mu }\in \mathfrak{P}_{\leq n,k}$ turns $%
\mathfrak{F}_{n,N}[q]$ into a commutative, associative and unital $\mathbb{C}%
[q]$-algebra whose integral form is isomorphic to the quantum cohomology
ring $qH^{\ast }(\func{Gr}_{n,N})$. In particular, its structure constants
are given by the matrix elements of the noncommutative Schur polynomial, $%
\langle \nu ,\boldsymbol{s}_{\lambda }{\mu \rangle =}C_{\lambda ,\mu }^{\nu
,d}q^{d}$.
\end{theorem}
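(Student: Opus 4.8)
The plan is to realise $(\mathfrak{F}_{n,N}[q],\star)$ as a commutative algebra acting on itself through the $\boldsymbol{s}_\lambda$, and then to identify that algebra with the Siebert--Tian presentation \eqref{STiso}. Let $\mathcal{R}\subseteq\func{End}_{\mathbb{C}[q]}\mathfrak{F}_{n,N}[q]$ be the subalgebra generated by $\boldsymbol{e}_1,\dots,\boldsymbol{e}_n$; it is commutative by the preceding Proposition, and since the noncommutative Schur polynomials obey the Jacobi--Trudi identities while $\boldsymbol{e}_r$ vanishes on $\mathfrak{F}_{n,N}[q]$ for $n<r\le N$, every $\boldsymbol{s}_\lambda$ with $\lambda\in\mathfrak{P}_{\leq n,k}$ lies in $\mathcal{R}$. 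Write $\varnothing\in\mathfrak{F}_{n,N}$ for the basis vector of the empty partition, i.e. the $01$-word $1\cdots 1\,0\cdots 0$. The theorem reduces to two claims: \textbf{(a)} $\boldsymbol{s}_\lambda\varnothing=\lambda$ for all $\lambda\in\mathfrak{P}_{\leq n,k}$, whence $R\mapsto R\varnothing$ is a $\mathbb{C}[q]$-module isomorphism $\mathcal{R}\cong\mathfrak{F}_{n,N}[q]$ carrying $\boldsymbol{s}_\lambda$ to $\lambda$; and \textbf{(b)} $e_r\mapsto\boldsymbol{e}_r$ induces a $\mathbb{Z}[q]$-algebra isomorphism $qH^\ast(\func{Gr}_{n,N})\cong\mathcal{R}$.

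For \textbf{(a)}, note first that from the explicit fermion formulas each $u_i$ is homogeneous of degree $+1$ for the grading $\deg(\text{word }\mu)=|\mu|$, $\deg q=N$ (the factor $q$ in $u_N$ exactly offsetting the transport of a particle around the cylinder), so $\boldsymbol{h}_r$ has degree $r$ and $\boldsymbol{s}_\lambda$ degree $|\lambda|$. At $q=0$ the $u_i$ become the finite nil-Temperley--Lieb generators $\psi_{i+1}^\ast\psi_i$ ($i<N$), and the associated symmetric-function operators realise $H^\ast(\func{Gr}_{n,N})$ acting on itself by multiplication with $\varnothing$ corresponding to the unit $1$; hence $\boldsymbol{s}_\lambda|_{q=0}\varnothing=\lambda$. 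Since $\boldsymbol{s}_\lambda\varnothing-\lambda$ is homogeneous of degree $|\lambda|$ and divisible by $q$, it vanishes when $|\lambda|<N$, and the finitely many remaining cases follow by induction on $|\lambda|$ from the quantum Pieri rule for the $\boldsymbol{h}_r$ (equivalently, from the fact that $\sigma_\varnothing$ is the identity of $qH^\ast$). The map $R\mapsto R\varnothing$ is then onto, its image containing every basis vector $\lambda$, and injective, since $R\varnothing=0$ forces $R\lambda=R\boldsymbol{s}_\lambda\varnothing=\boldsymbol{s}_\lambda R\varnothing=0$ for all $\lambda$ and hence $R=0$; as all operators involved have $\mathbb{Z}[q]$-matrix entries this is an isomorphism of integral forms as well.

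For \textbf{(b)}, the preceding Proposition sends $h_r$ to $\boldsymbol{h}_r$, so it suffices to verify on $\mathfrak{F}_{n,N}[q]$ that $\boldsymbol{h}_{k+1}=\dots=\boldsymbol{h}_{N-1}=0$ and $\boldsymbol{h}_N=(-1)^{n+1}q$. This is a direct computation with the Clifford relations \eqref{Cliff relations} and the cyclic order on $\{u_1,\dots,u_N\}$: a clockwise product of $r$ distinct generators applied to an $n$-particle word either annihilates it or, once $r>k$, must carry a particle once around the cylinder, producing $0$ when $k<r<N$ and the scalar $(-1)^{n+1}q$ when $r=N$ --- the latter being precisely the normalisation fixed in the Definition of $\boldsymbol{h}_N$. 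Hence $e_r\mapsto\boldsymbol{e}_r$ annihilates the ideal $\langle h_{k+1},\dots,h_{n+k-1},h_{n+k}+(-1)^nq\rangle$ and, by \eqref{STiso}, descends to a surjection $qH^\ast(\func{Gr}_{n,N})\twoheadrightarrow\mathcal{R}$, $\sigma_\lambda\mapsto\boldsymbol{s}_\lambda$. By \textbf{(a)} the target is $\mathbb{Z}[q]$-free of rank $\binom{n+k}{n}$ with basis $\{\boldsymbol{s}_\lambda\}_{\lambda\in\mathfrak{P}_{\leq n,k}}$, the same rank as the source; a surjection of free modules of equal finite rank over $\mathbb{Z}[q]$ has determinant a unit, hence is an isomorphism, giving \textbf{(b)}.

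Transporting the commutative, associative, unital ring structure of $\mathcal{R}$ along the isomorphism of \textbf{(a)} yields $\lambda\star\mu=\boldsymbol{s}_\lambda\boldsymbol{s}_\mu\varnothing=\boldsymbol{s}_\lambda\mu$, with unit $\varnothing$ (as $\boldsymbol{s}_\varnothing=\mathrm{id}$), commutativity from commutativity of the $\boldsymbol{s}_\lambda$, and associativity from associativity of operator composition; and its structure constants are by construction those of $qH^\ast(\func{Gr}_{n,N})$, i.e. $\langle\nu,\boldsymbol{s}_\lambda\mu\rangle=C_{\lambda\mu}^{\nu,d}q^{d}$ with $d=(|\lambda|+|\mu|-|\nu|)/N$. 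I expect the two Clifford-algebra computations feeding into \textbf{(b)} (together with the base case of \textbf{(a)}) to be the crux: they encode the ``wrap-around the cylinder'' mechanism responsible for the quantum deformation, whereas the rest is formal bookkeeping with Jacobi--Trudi and a rank count. Both computations are carried out in \cite{Postnikov} and \cite[Part~II]{ckcs}.
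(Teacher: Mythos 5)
The paper itself contains no proof of Theorem \ref{Schurprod}: it is recalled as a known result due to Postnikov \cite{Postnikov}, with an alternative Clifford-algebra proof in \cite[Part II]{ckcs}, and is then \emph{used} as an ingredient in the proof of Theorem \ref{main}. So there is no in-paper argument to compare against; your proposal is, in effect, a reconstruction of the argument in those references, and it follows the same route they take: realise the $\boldsymbol{e}_r$ as commuting operators, verify that they satisfy the defining relations of the Siebert--Tian presentation \eqref{STiso} (i.e. $\boldsymbol{h}_{k+1}=\cdots=\boldsymbol{h}_{N-1}=0$ on the $n$-particle space, with $\boldsymbol{h}_N=(-1)^{n+1}q$ holding by the normalisation in the Definition), identify the operator algebra $\mathcal{R}$ with $\mathfrak{F}_{n,N}[q]$ by evaluation at the word of the empty partition, and finish with a rank count. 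The logical skeleton is sound, and you correctly isolate the crux --- the quantum Pieri rule for the $\boldsymbol{h}_r$ and the vanishing of $\boldsymbol{h}_r$ for $k<r<N$ --- which you defer to exactly the sources the paper cites, so your proposal is no less complete than the paper's own treatment.

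Two small corrections. First, your $\varnothing$ clashes with the paper's: there it denotes the vacuum $00\cdots0\in\mathfrak{F}_{0,N}$, whereas you mean the word $1^{n}0^{k}$ of the empty partition in $\mathfrak{F}_{n,N}$; you do define it, but note that \eqref{fermi product} uses the other meaning. Second, your stated mechanism for $\boldsymbol{h}_r=0$ when $k<r<N$ (``must carry a particle once around the cylinder'') is not the right one --- transport around the cylinder produces a factor of $q$, not $0$. The correct reason is a hole count: the clockwise product over a set $I$ with $|I|=r$ decomposes into maximal cyclic runs, each of which pushes one particle through as many empty sites as the run has elements, and distinct runs use disjoint empty sites; so a nonzero result requires at least $r$ zeros in the word, while a word in $W_{n,N}$ has only $k<r$ of them. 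The conclusion, and hence your step (b), is unaffected.
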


\begin{remark}{\rm
Comparing (\ref{freeSchurproduct}) with (\ref{fermi product}) one can convince oneself that the latter product formulation presents a simplification. For instance,
choosing $n=4,\;k=3$ and $\lambda =(2,2,1,0)$ according to (\ref%
{freeSchurproduct}) one first needs to compute the determinant in (\ref%
{ncSchurdef}), $s_{\lambda }=e_{2}e_{3}-e_{1}e_{4}$, and then multiply out
the elementary symmetric polynomials in the noncommutative alphabet $%
\{u_{1},\ldots ,u_{7}\}$ before acting with the individual monomial terms in
the expansion on the diagram $\mu $. Below we see the simplified computation
in terms of (\ref{fermi product}). However, the product description \eqref{freeSchurproduct} in terms
of noncommutative Schur polynomials is more convenient when proving
associativity; see \cite{ckcs}.
}
\end{remark}

As explained in \cite[Section 11]{ckcs} the main advantage of the fermion
formalism is that it allows to relate products in different quantum
cohomology rings and, thus, to successively create all rings $qH^{\ast }(%
\func{Gr}_{n,N})$ for $n=0,\ldots ,N$. The crucial result is the following
commutation relation of fermion creation and annihilation operators and
noncommutative Schur functions \cite[Proposition 11.4]{ckcs}.

\begin{proposition}
The following commutation relations hold true,%
\begin{equation}
\boldsymbol{s}_{\lambda }\psi _{i}^{\ast }=\sum_{r=0}^{\lambda _{1}}\psi
_{i+r}^{\ast }\sum_{\lambda /\mu =(r)}\boldsymbol{\bar{s}}_{\mu },
\label{Schurfermicomm}
\end{equation}%
where $\boldsymbol{\bar{s}}_{\mu }$ denotes the noncommutative Schur
polynomial (\ref{ncSchurdef}) with $q$ replaced by $-q$ and we impose again
the quasi-periodic boundary conditions $\psi _{j+N}^{\ast }=(-1)^{\limfunc{n}%
-1}q\psi _{j}^{\ast }$.
\end{proposition}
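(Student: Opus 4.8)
The plan is to reduce \eqref{Schurfermicomm} to a single ``one--step'' relation between a creation operator and the noncommutative \emph{elementary} symmetric functions, and then to propagate it through the Jacobi--Trudi formula. Write $\boldsymbol{e}(t)=\sum_{r\ge 0}\boldsymbol{e}_{r}t^{r}$, and $\boldsymbol{\bar{e}}(t)$ for the same series with $q$ replaced by $-q$. The relation I would establish first is
\[
\boldsymbol{e}_{s}\,\psi_{i}^{\ast}=\psi_{i}^{\ast}\,\boldsymbol{\bar{e}}_{s}+\psi_{i+1}^{\ast}\,\boldsymbol{\bar{e}}_{s-1},
\qquad\text{equivalently}\qquad
\boldsymbol{e}(t)\,\psi_{i}^{\ast}=\bigl(\psi_{i}^{\ast}+t\,\psi_{i+1}^{\ast}\bigr)\,\boldsymbol{\bar{e}}(t),
\]
the indices of the creation operators being read modulo $N$ with the quasi-periodicity $\psi_{j+N}^{\ast}=(-1)^{\limfunc{n}-1}q\,\psi_{j}^{\ast}$. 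This is proved by a computation in the Clifford algebra: from $\psi_{j}\psi_{i}^{\ast}=\delta_{ij}-\psi_{i}^{\ast}\psi_{j}$ and $u_{j}=\psi_{j+1}^{\ast}\psi_{j}$ one obtains the three local rules that $u_{j}$ commutes with $\psi_{i}^{\ast}$ for $i\notin\{j,j+1\}$, that $u_{j}\psi_{j+1}^{\ast}=\psi_{j+1}^{\ast}u_{j}=0$, and that $u_{j}\psi_{j}^{\ast}=\psi_{j}^{\ast}u_{j}+\psi_{j+1}^{\ast}$ (with the cyclic analogue for $u_{N}$, where the particle-number-dependent sign in the definition of $u_{N}$ must be carried along). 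Commuting $\psi_{i}^{\ast}$ leftwards through a counterclockwise monomial $\prod_{i\in I}^{\circlearrowleft}u_{i}$ of $\boldsymbol{e}_{s}$, the only inhomogeneous contribution is the $\psi_{j+1}^{\ast}$ created at the unique position where the creation operator meets ``its own'' index; since the counterclockwise ordering places $u_{i}$ to the left of $u_{i+1}$, this can happen at most once, the net index shift is therefore $0$ or $1$, and the two stated terms are exactly what is collected.

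Everything after that is formal. Using $\boldsymbol{h}(t)\boldsymbol{e}(-t)=1$, a relation valid for the noncommutative symmetric functions, one rearranges the one--step relation into its homogeneous counterpart $\boldsymbol{h}(t)\psi_{i}^{\ast}=\bigl(\sum_{r\ge 0}t^{r}\psi_{i+r}^{\ast}\bigr)\boldsymbol{\bar{h}}(t)$, i.e. $\boldsymbol{h}_{s}\psi_{i}^{\ast}=\sum_{r=0}^{s}\psi_{i+r}^{\ast}\boldsymbol{\bar{h}}_{s-r}$ for every $i$. Since $\boldsymbol{s}_{\lambda}=\det(\boldsymbol{h}_{\lambda_{i}-i+j})_{1\le i,j\le N}$ expands as a $\mathbb{Z}$-linear combination of products of the pairwise commuting $\boldsymbol{h}_{r}$, one transports $\psi_{i}^{\ast}$ leftwards through such a product one factor at a time,
\[
\boldsymbol{h}_{a_{1}}\cdots\boldsymbol{h}_{a_{N}}\,\psi_{i}^{\ast}=\sum_{r_{1},\ldots,r_{N}\ge 0}\psi^{\ast}_{i+r_{1}+\cdots+r_{N}}\,\boldsymbol{\bar{h}}_{a_{1}-r_{1}}\cdots\boldsymbol{\bar{h}}_{a_{N}-r_{N}},
\]
and re-collects the permutations of the Jacobi--Trudi determinant, grouped by the total shift $m=r_{1}+\cdots+r_{N}$. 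For each $m$ the coefficient of $\psi_{i+m}^{\ast}$ becomes $\sum_{r_{1}+\cdots+r_{N}=m}\det\bigl(\boldsymbol{\bar{h}}_{(\lambda_{i}-r_{i})-i+j}\bigr)$, which is the usual Jacobi--Trudi presentation of $\sum_{\lambda/\mu=(m)}\boldsymbol{\bar{s}}_{\mu}$ (this is exactly the standard proof of the dual Pieri rule), and it vanishes for $m>\lambda_{1}$. This is \eqref{Schurfermicomm}. The identities invoked here --- $\boldsymbol{h}$--$\boldsymbol{e}$ duality, Jacobi--Trudi and its straightening --- hold because the $\boldsymbol{e}_{r},\boldsymbol{h}_{s}$ obey all the relations of the ring of symmetric functions, and the index-wrapping is automatically consistent because the identification $\boldsymbol{h}_{r+N}\sim\pm q\,\boldsymbol{h}_{r}$ forced by the quotient is the same one carried by $\psi_{r+N}^{\ast}=(-1)^{\limfunc{n}-1}q\,\psi_{r}^{\ast}$.

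The one genuine difficulty is the sign bookkeeping in the one--step relation. One has to keep track of the Clifford signs $(-1)^{n_{i-1}(w)}$, of the clockwise/counterclockwise conventions in the definitions of $\boldsymbol{e}_{r}$ and $\boldsymbol{h}_{r}$, and above all of the fact that the single $q$-dependent letter $u_{N}=(-1)^{\limfunc{n}-1}q\,\psi_{1}^{\ast}\psi_{N}$, and likewise $\boldsymbol{h}_{N}=(-1)^{\limfunc{n}-1}q$, carry the parity of the particle-number operator $\limfunc{n}$, which increases by one unit as $\psi_{i}^{\ast}$ is commuted past them; this is precisely the mechanism that turns $q$ into $-q$ and produces the bars on the right-hand side. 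Once the one--step relation is pinned down with the correct signs the remainder is routine, and of course one may instead appeal directly to the proof in \cite[Part~II]{ckcs}.
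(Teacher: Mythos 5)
Your argument is correct, and it is worth noting that the paper itself offers no proof of this proposition to compare against: it is imported verbatim from \cite[Proposition 11.4]{ckcs}. So your write-up is a genuinely self-contained derivation where the text has only a citation. The route you take --- first the one-step relation $\boldsymbol{e}_{s}\psi_{i}^{\ast}=\psi_{i}^{\ast}\boldsymbol{\bar{e}}_{s}+\psi_{i+1}^{\ast}\boldsymbol{\bar{e}}_{s-1}$ from the three local rules in the Clifford algebra, then the homogeneous version via $\boldsymbol{h}(t)\boldsymbol{e}(-t)=1$ (legitimate, since the paper's earlier proposition guarantees the $\boldsymbol{e}_{r},\boldsymbol{h}_{s}$ satisfy all relations of the ring of symmetric functions), then propagation through the Jacobi--Trudi determinant by the standard dual-Pieri manipulation --- checks out. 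In particular the three local rules are exactly right, the $q\mapsto-q$ mechanism is correctly located in the commutation of a creation operator past the particle-number sign carried by $u_{N}$ and $\boldsymbol{h}_{N}$, and the wrap-around case $i=N$ is consistent with the quasi-periodicity $\psi_{N+1}^{\ast}=(-1)^{\operatorname{n}-1}q\,\psi_{1}^{\ast}$.

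One step in the one-step relation deserves an explicit line. After summing the inhomogeneous contributions over all $s$-subsets $I\ni i$, the monomials collected in front of $\psi_{i+1}^{\ast}$ are the counterclockwise products over $J=I\setminus\{i\}$, i.e.\ only over those $(s-1)$-subsets $J$ with $i\notin J$. To identify the total with $\psi_{i+1}^{\ast}\boldsymbol{\bar{e}}_{s-1}$ you must also observe that $\psi_{i+1}^{\ast}\tprod_{j\in J}^{\circlearrowleft}u_{j}=0$ whenever $i\in J$: moving $\psi_{i+1}^{\ast}$ rightwards it commutes with every letter until it meets $u_{i}$ (it cannot meet $u_{i+1}$ first, since the counterclockwise order places $u_{i+1}$ to the right of $u_{i}$), and $\psi_{i+1}^{\ast}u_{i}=(\psi_{i+1}^{\ast})^{2}\psi_{i}=0$. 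The symmetric remark covers the homogeneous branch, which is killed by a $u_{i-1}$ sitting to the left of $u_{i}$ exactly when the corresponding summand $\psi_{i}^{\ast}\tprod_{j\in I}^{\circlearrowleft}u_{j}$ (with $q$ replaced by $-q$) already vanishes, so no inconsistency arises. With that observation inserted, the one-step relation is pinned down with the correct signs and the remainder of your argument is, as you say, formal.
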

We now have collected all the necessary ingredients to prove the main result.
\begin{proof}[Proof of Theorem \ref{main} and derivation of \eqref{fermi product}]
Given any pair $\lambda {,\mu }\in \mathfrak{P}_{\leq n,k}$ denote by $%
w,w^{\prime }$ the associated 01-words in $\mathfrak{F}_{n,N}[q].$ Any word
can be written in the form $w=\psi _{\ell _{n}}^{\ast }\cdots \psi _{\ell
_{2}}^{\ast }\psi _{\ell _{1}}^{\ast }\varnothing $ with $1\leq \ell
_{n}<\ell _{n-1}<\cdots <\ell _{1}\leq N$. Thus, repeated application of the
above commutation relation \eqref{Schurfermicomm} yields%
\begin{multline*}
{\lambda }\star {\mu }=\boldsymbol{s}_{\lambda }{\mu =}\boldsymbol{s}%
_{\lambda }\psi _{\ell _{n}(\mu )}^{\ast }\cdots \psi _{\ell _{2}(\mu
)}^{\ast }\psi _{\ell _{1}(\mu )}^{\ast }\varnothing \\
=\sum_{\rho _{n-1}}\psi _{\ell _{n}(\mu )+|\lambda /\rho _{n-1}|}^{\ast }%
\boldsymbol{\bar{s}}_{\rho _{n-1}}\psi _{\ell _{n-1}(\mu )}^{\ast }\cdots
\psi _{\ell _{1}(\mu )}^{\ast }\varnothing \\
=\sum_{\rho _{n-2},\rho _{n-1}}\psi _{\ell _{n-1}(\mu )+|\lambda /\rho
_{n-1}|}^{\ast }\bar{\psi}_{\ell _{n-1}(\mu )+|\rho _{n-1}/\rho
_{n-2}|}^{\ast }\boldsymbol{s}_{\rho _{n-2}}\psi _{\ell _{n-2}(\mu )}^{\ast
}\cdots \psi _{\ell _{1}(\mu )}^{\ast }\varnothing \\
\vdots \\
=\sum_{(\rho _{n-1},\ldots ,\rho _{1})}\psi _{\ell _{n}(\mu )+|\lambda /\rho
_{n-1}|}^{\ast }\bar{\psi}_{\ell _{n-1}(\mu )+|\rho _{n-1}/\rho
_{n-2}|}^{\ast }\psi _{\ell _{n-2}(\mu )+|\rho _{n-2}/\rho _{n-3}|}^{\ast
}\cdots \varnothing ,
\end{multline*}%
where the sums run over all partitions $\rho _{i}$ such that $\rho
_{n}=\lambda $, $\rho _{0}=\emptyset $ and $\rho _{n+1-i}/\rho _{n-i}$ is a
horizontal strip. The constraint $\rho _{0}=\emptyset $ simply follows from
the fact that $\boldsymbol{s}_{\rho }\varnothing $ is only nonzero for $\rho
=\emptyset $. Such a sequence of partitions is equivalent to a
(semi-standard) tableau $T$, where $\rho _{i}$ is obtained by taking the
shape of $T$ after deleting all boxes with entries \TEXTsymbol{>} $i$.
Hence, the assertion (\ref{fermi product}) now follows from Theorem \ref%
{Schurprod}.
\end{proof}

\begin{remark}
{\rm In \cite[Proposition 11.4]{ckcs} a second commutation relation for the fermion
annihilation operators and the noncommutative Schur polynomials has been
derived. The latter leads to a product formula analogous to (\ref{fermi
product}) where one replaces the product of the fermion creation operators
acting on $w=00\cdots 0$ by a product in the $\psi _{i}$'s acting on the
word $w=11\cdots 1$. Since this formula can be easily obtained by applying
the parity and particle-hole duality transformations discussed in \cite[Section 8.3]{ckcs} we omit it here.}
\end{remark}

\begin{proof}[Proof of Corollary \ref{maincor}]
The proof is immediate as the product formula \eqref{fermi product} implies that the structure
constants are given by the following sum over matrix elements (`vacuum expectation values') in the Clifford algebra,%
\begin{equation}  \label{GWvev}
C_{\lambda \mu }^{\nu ,d}=\sum_{T=|\lambda |}(-1)^{d(n-1)}\langle
\varnothing ,\psi _{\ell _{1}(\nu )}\cdots \psi _{\ell _{n}(\nu )}\psi
_{\ell _{n}(\mu )+t_{n}}^{\ast }\cdots \psi _{\ell _{1}(\mu )+t_{1}}^{\ast
}\varnothing \rangle ,
\end{equation}%
where all indices are now understood modulo $N$ and $d=\#\{i~|~\ell _{i}(\mu
)+t_{i}>N\}$ because of \eqref{qpbc2}. For fixed weight vector $\alpha =(t_{1},\ldots ,t_{n})$ the
same matrix elements appears $K_{\lambda ,\alpha }$ times by definition of
the Kostka numbers. The resulting particle positions $\ell _{i}(\mu )+t_{i}$
must up to a permutation $\pi $ coincide with those of the partition $\nu $.
Since the fermion operators anticommute the sign factor $(-1)^{\ell (\pi )}$
follows and with it the asserted identity.
\end{proof}

\begin{example}
\label{fprodex}{\rm Set $N=7$ and $k=N-n=4$. Consider the partitions $%
\lambda =(2,2,1,0)$ and $\mu =(3,3,2,1)$. Converting $\mu $ into a $01$-word
we find for the positions of the $1$-letters $\ell (\mu )=(\ell _{1},\ldots
,\ell _{4})=(7,6,4,2)$. Writing down all Young tableaux of shape $\lambda $
with weight vectors $\alpha =(\alpha _{1},\ldots ,\alpha _{4})$ one finds
that the non-trivial contributions come from%
\begin{equation*}
\Yvcentermath1\underset{(9,7,5,3)}{{\young(11,24,3)}}{,\;}\underset{(9,7,5,3)%
}{{\young(11,23,4)}}{,\;}\underset{(9,8,4,3)}{{\young(11,22,4)}}{,\ }%
\underset{(9,6,5,4)}{{\young(11,34,4)}}{,\;}\underset{(8,7,6,3)}{{\young%
(13,24,3)}}{,\;}\underset{(8,7,6,3)}{{\young(12,33,4)}}{,\;}\underset{%
(7,8,6,3)}{{\young(22,33,4)}}{,\;}\underset{(8,7,5,4)}{{\young(13,24,4)}}{,\
}\underset{(8,7,5,4)}{{\young(12,34,4)}}{,\;}\underset{(7,8,5,4)}{{\young%
(22,34,4)}}{\;.}
\end{equation*}%
Below each tableau we have listed the resulting `particle positions' $\ell
_{i}^{\prime }=\ell _{i}(\mu )+t_{i}$ appearing in (\ref{fermi product}) as
indices of the fermion creation operators $\psi _{i}^{\ast }$. We have
dropped all those tableaux from the list for which two positions coincide.
For instance, the not listed Young tableau%
\begin{equation*}
\Yvcentermath1{\young(11,22,3)}
\end{equation*}%
yields after applying (\ref{qpbc2}) the following product of fermion
creation operators,%
\begin{equation*}
\psi _{\ell _{4}}^{\ast }\bar{\psi}_{\ell _{3}+1}^{\ast }\psi _{\ell
_{2}+2}^{\ast }\bar{\psi}_{\ell _{1}+2}^{\ast }=-q^{2}\psi _{2}^{\ast }\psi
_{5}^{\ast }\psi _{1}^{\ast }\psi _{2}^{\ast }=0\;.
\end{equation*}%
The latter vanishes because of the Clifford algebra relations (\ref{Cliff
relations}), which imply $(\psi _{i}^{\ast })^{2}=0$. In contrast the last
three tableaux listed above,
\begin{equation*}
\Yvcentermath1
\underset{(8,7,5,4)}{{\young(13,24,4)}}{,\
}\underset{(8,7,5,4)}{{\young(12,34,4)}}{,\;}\underset{(7,8,5,4)}{{\young%
(22,34,4)}}{\;,}
\end{equation*}%
 yield the same 01-word $w=1001101$ ($\lambda
(w)=(3,2,2,0)$) but with changing sign,%
\begin{multline*}
\psi _{\ell _{4}+2}^{\ast }\bar{\psi}_{\ell _{3}+1}^{\ast }\psi _{\ell
_{2}+1}^{\ast }\bar{\psi}_{\ell _{1}+1}^{\ast }\varnothing =\psi _{\ell
_{4}+2}^{\ast }\bar{\psi}_{\ell _{3}+1}^{\ast }\psi _{\ell _{2}+1}^{\ast }%
\bar{\psi}_{\ell _{1}+1}^{\ast }\varnothing = \\
-\psi _{\ell _{4}+2}^{\ast }\bar{\psi}_{\ell _{3}+1}^{\ast }\psi _{\ell
_{2}+2}^{\ast }\bar{\psi}_{\ell _{1}}^{\ast }\varnothing =q\psi _{1}^{\ast
}\psi _{4}^{\ast }\psi _{5}^{\ast }\psi _{7}^{\ast }\varnothing \;.
\end{multline*}%
The relevant Kostka numbers are $K_{\lambda ,(2,1,1,1)}=2\;$and $K_{\lambda
,(2,2,1,0)}=1$. Converting the other tableaux in the same manner into
01-words paying attention to the quasi-periodic boundary conditions (\ref%
{qpbc2}) we obtain the product expansion%
\begin{equation}\label{ppex}
\Yvcentermath1\yng(2,2,1)\star \yng(3,3,2,1)= \\
\Yvcentermath1q~\yng(2,2,2,1)+2q~\yng(3,2,1,1)+q~\yng(3,2,2)+q~\yng%
(3,3,1)+q^{2}~\emptyset \;.
\end{equation}%
}
\end{example}
\subsection{Comparison with the rim-hook algorithm}
Bertram, Ciocan-Fontanine and Fulton gave the following expression for
Gromov-Witten invariants in terms of Littlewood-Richardson coefficients \cite[p735]%
{BCF},%
\begin{equation}
C_{\lambda \mu }^{\nu ,d}=\sum_{\rho }\varepsilon (\rho /\nu )c_{\lambda \mu
}^{\rho },\qquad \varepsilon (\rho /\nu )=\prod_{i}(-1)^{(n-\text{width}%
(r_{i}))},  \label{GWrimhook}
\end{equation}%
where the sum runs over all Young diagrams $\rho $ which are obtained from $%
\nu $ by adding $d$ rim hooks $r_{i}$ each consisting of $N$ boxes and
starting in the first column. The integer $\text{width}(r_{i})$ is the number of columns the rim hook $r_i$ occupies; see \cite{BCF} for details.

Since Kostka numbers can be viewed as special cases of Littlewood-Richardson coefficients
\cite{Kingetal},%
\begin{equation}
K_{\lambda ,\alpha }=c_{\lambda \mu (\alpha )}^{\nu (\alpha )}\quad \text{%
with}\quad \mu _{i}(\alpha ):=\sum_{j>i}\alpha _{j},\quad \nu _{i}(\alpha
):=\sum_{j\geq i}\alpha _{j}~,  \label{KostkaLR}
\end{equation}%
one might ask whether the expression (\ref{GWKostka}) for Gromov-Witten
invariants coincides with the known expression from the rim-hook algorithm.

\begin{example}
\label{rimhookex1}{\rm To illustrate the algorithm we adopt Example 1
from \cite[p735]{BCF}. Set $k=n=5$ and $\lambda =(5,4,4,2,2)$, $\mu =(3,2,1)$%
, $\nu =(2,1)$. Then $d=(|\lambda |+|\mu |-|\nu |)/N=(17+6-3)/10=2$ and%
\begin{equation}
C_{\lambda \mu }^{\nu ,d}=c_{\lambda \mu }^{(5,5,4,3,2,2,2)}-c_{\lambda \mu
}^{(5,4,4,3,2,2,2,1)}=2-1=1\;.
\end{equation}%
In contrast let us determine the expression (\ref{GWKostka}) in terms of
Kostka numbers. For convenience we swap the roles of $\lambda $ and $\mu $
exploiting that the product is commutative. Converting $\lambda $ and $\nu $
into 01-words we find the following 1-letter positions $\ell (\lambda
)=(10,8,7,4,3)$ and $\ell (\nu )=(7,5,3,2,1)$. Since the weight vector $%
\alpha$ must obey the constraints $|\alpha|=|\mu |=6$ and $\alpha _{i}\leq
\mu _{1}=3$, there is only one possibility: $\alpha =(2,3,0,1,0)$ with Young
tableau%
\begin{equation*}
\Yvcentermath1{\young(112,22,4)\;.}
\end{equation*}%
Thus, we find the identity%
\begin{equation*}
C_{\lambda \mu }^{\nu ,d}=c_{\lambda \mu }^{(5,5,4,3,2,2,2)}-c_{\lambda \mu
}^{(5,4,4,3,2,2,2,1)}=K_{\mu ,(3,2,1)}=c_{(3,2,1),(3,1)}^{(6,3,1)}=1\;.
\end{equation*}
}
\end{example}

\begin{remark}{\rm
Equating expressions (\ref{GWKostka}) and (\ref{GWrimhook}) we
obtain in general non-trivial identities between Littlewood-Richardson
coefficients by employing the identity (\ref{KostkaLR}).}
\end{remark}

There exists a `dual rim hook algorithm' \cite{BCF} for which a simplified version has been stated; see  \cite{RRW}, \cite{Sottile} and \cite{Buch} for details. In the present context we wish to connect it with the free fermion formulation of the quantum cohomology ring and therefore briefly outline its derivation employing \eqref{STiso}.
\subsection{Comparison with the `dual rim hook algorithm'}\label{subsec:dualrimhook}
Given two partitions $\lambda ,\mu \in \mathfrak{P}_{\leq n,k}$ one exploits
(\ref{STiso}) to identify the Schubert classes $\sigma _{\lambda },\sigma
_{\mu }$ with the Schur polynomials $s_{\lambda },s_{\mu }$. In order to
compute the product in $qH^{\ast }(\func{Gr}_{n,N})$, one first performs the standard Littlewood-Richardson algorithm to obtain the (non-modified) expansion $s_{\lambda }s_{\mu }=\sum_{\nu
}c_{\lambda \mu }^{\nu }s_{\nu }$ where $c_{\lambda \mu }^{\nu }$ are again
the Littlewood-Richardson coefficients. Then one imposes the quotient
condition in (\ref{STiso}) by discarding all terms $s_{\nu }$ with
partitions $\nu $ which contain more than $n$ nonzero parts and by replacing
all the remaining Schur polynomials with the polynomials $%
(-1)^{d(n-1)}q^{d}s_{v(\nu )},\;Nd=|\lambda |-|v|$ where $v(\nu )$ is the
unique set of integers such that%
\begin{equation}
v_{i}(\nu )=\nu _{i}\func{mod}N\qquad \text{and\qquad }i-n\leq v_{i}(\nu
)<i+k\;.  \label{GWreduce}
\end{equation}%
Recall that Schur polynomials can be defined for arbitrary vectors $v\in
\mathbb{Z}^{n}$ exploiting the relations \cite{MacDonald}
\begin{equation}\label{Schurstraight}
s_{(\ldots ,a,b,\ldots
)}=-s_{(\ldots ,b-1,a+1,\ldots )}\quad\text{and}\quad s_{(\ldots ,a,a+1,\ldots )}=0.
\end{equation}
Collecting terms the resulting coefficients in the expansion are the Gromov-Witten invariants.
\begin{proof}[Derivation of the algorithm in the free fermion picture]
As the $q$-dependence is trivially deduced from the degree of the Gromov-Witten invariant we set $q=1$ for simplicity. Let $\mathcal{I}=\langle h_{k+1},\ldots,h_{n+k-1},h_{n+k}+(-1)^n\rangle$ be the two-sided ideal specified in \eqref{STiso} and consider the extension of $qH^{\ast }(\func{Gr}_{n,n+k})$ to the complex numbers $\mathbb{C}$. Similar as in \cite[Proof of Theorem 6.20]{ckcs} one shows that $\mathcal{I}$ is radical. Recall from \cite[Section 10]{ckcs} that the affine variety $\mathbb{V}(\mathcal{I})\subset\mathbb{C}^n$ determined by this ideal is as a set equal to the solutions of the following system of equations, $y_{i}^{n+k}=(-1)^{n-1}$, $i=1,\ldots n$, which are the free fermion Bethe Ansatz equations and can be solved
explicitly. Employing Hilbert's Nullstellensatz, namely that $\mathcal{I}$ equals the ideal of polynomials which vanish on $\mathbb{V}(\mathcal{I})$, one then shows that two functions $f,g\in\mathbb{C}[e_1,\ldots,e_n]/\mathcal{I}$ coincide if and only if they have equal values on the set of solutions of the Bethe Ansatz equations. Thus, since there are only $n$ variables, $s_{\nu }(y_{1},\ldots,y_{n})=0$ for partitions of length $\ell (\nu )>n$. Given a Schur polynomial $s_{\nu }$ with $\nu _{1}>k$ in the Littlewood-Richardson expansion one may write%
\begin{eqnarray}
s_{\nu }(y_{1},\ldots ,y_{n}) &=&\sum_{\pi \in S_{n}}\pi \left( y_{1}^{\nu
_{1}}\cdots y_{n}^{\nu _{n}}\tprod_{i<j}\frac{1}{1-y_{j}/y_{i}}\right) \label{Schurpolyrep}\\
&=&(-1)^{n-1}s_{(\nu _{1}-n-k,\nu_{2},\ldots ,\nu _{n})}(y_{1},\ldots
,y_{n})\;,\notag
\end{eqnarray}%
where the first equality is a standard identity which can be found in e.g. \cite{MacDonald}. The second identity has used the Bethe Ansatz equations. Repeating this procedure we can achieve the above identity $s_{\nu }(y)=
(-1)^{d(n-1)}s_{v(\nu )}(y)$ with $v(\nu)$ given by \eqref{GWreduce}.
\end{proof}

\begin{example}
\label{rimhookex2}{\rm Set $n=3,\,k=4$ and consider the partitions $%
\lambda =(3,1,0)$ and $\mu =(3,2,0).$ The Littlewood-Richardson rule yields the
partitions%
\begin{multline}
\nu =(6,3,0),(6,2,1),(5,4,0),(5,3,1),(5,3,1),(5,2,2),(4,4,1),  \label{LRRex} \\
(4,3,2),(4,3,2),(3,3,3),(5,2,1,1),(4,3,1,1),(4,2,2,1),(3,3,2,1)\;,
\end{multline}%
from which we can remove the last four as they have length \TEXTsymbol{>}
3. From the remaining terms we only need to transform those with partitions outside the bounding box, i.e. those for which $\nu_1>k$. For instance, consider $\nu =(5,3,1)$ then $v(\nu )=(-2,3,1)$ and $s_{\nu
}=qs_{(-2,3,1)}=-qs_{(2,-1,1)}=qs_{(2,0,0)}$. Similarly, we find for $\nu
=(6,3,0)$ that $s_{\nu }=-qs_{(2,0,0)}$, whence%
\begin{equation*}
C_{\lambda \mu }^{(2,0,0),1}=c_{\lambda \mu }^{(5,3,1)}-c_{\lambda \mu
}^{(6,3,0)}=2-1=1\;.
\end{equation*}%
In comparison, the expression in terms of Kostka numbers (\ref{GWKostka})
with $\ell (\mu )=(6,4,1)$ and $\ell (\nu )=(5,2,1)$ is%
\begin{equation*}
C_{\lambda \mu }^{(2,0,0),1}=K_{\lambda (2,1,1)}-K_{\lambda (3,1,0)}=2-1=1\;.
\end{equation*}%
The full product expansion reads%
\begin{equation*}
\Yvcentermath1\yng(3,1)\star \yng(3,2)= \\
\Yvcentermath1q~\yng(2)+q~\yng(1,1)+~\yng(4,4,1)+2~\yng(4,3,2)+~\yng%
(3,3,3)\;.
\end{equation*}%
}
\end{example}

\section{Proof and examples of the projection formula}\label{sec:projection}

As a preliminary step to the proof we again need to recall some technical results
first. Throughout this section set $\zeta =\exp \frac{2\pi i}{k+n}$ and
(following \cite{Rietsch}) define a map $I:\mathfrak{P}_{\leq
n,k}\rightarrow (\frac{1}{2}\mathbb{Z)}^{n}$ through%
\begin{equation}\label{Imap}
\sigma \mapsto I(\sigma )=\left( \tfrac{n+1}{2}+\sigma _{n}-n,\ldots ,\tfrac{%
n+1}{2}+\sigma _{1}-1\right) \;.
\end{equation}%
By abuse of notation we use the same symbol to denote the analogous map
where $k$ and $n$ are interchanged, i.e. for $\sigma ^{t}$, the transpose of
$\sigma $, we set
\begin{equation}\label{Itmap}
I(\sigma ^{t})=\left( \tfrac{k+1}{2}+\sigma _{k}^{t}-k,\ldots ,\tfrac{k+1}{2}%
+\sigma _{1}^{t}-1\right) .
\end{equation}

\begin{lemma}[Rietsch]
\label{technical}Let $s_{\lambda }$ denote the Schur polynomial associated
with the partition $\lambda $, then one has the identity%
\begin{equation}
s_{\lambda }(\zeta ^{I(\sigma )})=s_{\lambda ^{t}}(\zeta ^{-I(\sigma
^{t})})\;.
\end{equation}
\end{lemma}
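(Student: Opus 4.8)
The plan is to deduce the identity from the classical Jacobi--Trudi formulae, once one knows that the $n$ evaluation points on the left and the $k$ on the right are complementary subsets of a single rescaled copy of the group $\mu_N$ of $N$-th roots of unity ($N=n+k$). Concretely, write $\beta_a=\sigma_a+n-a$ ($a=1,\dots,n$) for the beta-numbers of $\sigma$; since $\sigma\in\mathfrak P_{\le n,k}$ these form an $n$-element subset of $\{0,1,\dots,N-1\}$, and the standard description of the beta-set of a conjugate partition (equivalently: the reverse-complement of the $01$-word $w(\sigma)$ is $w(\sigma^t)$) identifies the complement as $\{0,\dots,N-1\}\setminus\{\beta_a\}=\{\,n-1+b-\sigma^t_b\mid b=1,\dots,k\,\}$. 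A direct computation from \eqref{Imap} shows that the exponents $I(\sigma)_i$ run over $\{\beta_a-\tfrac{n-1}{2}\}$, while from \eqref{Itmap} together with $-1=\zeta^{N/2}$ one gets that $-\zeta^{-I(\sigma^t)_j}=\zeta^{\,N/2-I(\sigma^t)_j}$ has exponents running over the complement set shifted by $-\tfrac{n-1}{2}$. Hence, writing $x_i=\zeta^{I(\sigma)_i}$ ($1\le i\le n$) and $y_j=\zeta^{-I(\sigma^t)_j}$ ($1\le j\le k$), the set $\{x_1,\dots,x_n\}\cup\{-y_1,\dots,-y_k\}$ equals $\zeta^{-(n-1)/2}\mu_N$, so
\[
\prod_{i=1}^n(t-x_i)\prod_{j=1}^k(t+y_j)=t^N-c,\qquad c=(-1)^{n-1},
\]
although the precise value of $c$ plays no role below.

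Next I would invert this factorisation: $\prod_i(1-x_it)\prod_j(1+y_jt)=1-ct^N$, hence
\[
\sum_{r\ge0}h_r(x)\,t^r=\prod_{i=1}^n\frac{1}{1-x_it}=\frac{\prod_{j=1}^k(1+y_jt)}{1-ct^N}=\Bigl(\sum_{s=0}^{k}e_s(y)\,t^s\Bigr)\Bigl(\sum_{m\ge0}c^m t^{Nm}\Bigr).
\]
Comparing coefficients of $t^r$ for $0\le r\le N-1$, where only the term $m=0$ contributes, gives $h_r(x)=e_r(y)$ for every such $r$ (and both sides vanish for $r<0$).

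Finally I would assemble the two ingredients via Jacobi--Trudi. Because $\lambda\in\mathfrak P_{\le n,k}$, every index in the $n\times n$ Jacobi--Trudi determinant satisfies $\lambda_i-i+j\le\lambda_1-1+n\le N-1$, so the previous step applies entrywise:
\[
s_\lambda(x)=\det\bigl(h_{\lambda_i-i+j}(x)\bigr)_{1\le i,j\le n}=\det\bigl(e_{\lambda_i-i+j}(y)\bigr)_{1\le i,j\le n}.
\]
The last determinant is exactly the dual (N\"agelsbach--Kostka) Jacobi--Trudi expression for $s_{\lambda^t}(y)$, which is valid because its size $n$ is at least $(\lambda^t)_1=\ell(\lambda)$. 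Therefore $s_\lambda(\zeta^{I(\sigma)})=s_\lambda(x)=s_{\lambda^t}(y)=s_{\lambda^t}(\zeta^{-I(\sigma^t)})$, as claimed; running the same argument with the roles of $n$ and $k$ interchanged covers the transposed version involving $\sigma^t$ in the other slot.

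The main obstacle is the first step: one must keep track of the half-integer shifts $\tfrac{n\pm1}{2}$, the reflection $b\mapsto$ (complement of a beta-set), and the factor $-1=\zeta^{N/2}$ accurately enough to see that the two exponent sets really do exhaust $\{0,1,\dots,N-1\}$ up to a common translation. Once that combinatorial identification is in place, the remaining two steps are short and entirely standard symmetric-function manipulations; alternatively one could simply appeal to \cite{Rietsch} for this identification.
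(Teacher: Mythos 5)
Your proof is correct, and it is worth noting that the paper itself supplies no argument for this lemma at all: it is attributed to Rietsch and used as a black box (the explicit remark ``we shall accept these results here without proof'' a few lines later applies in spirit here too). So there is nothing in the paper to compare against step by step; what you have done is reconstruct a self-contained elementary proof. The three ingredients all check out. The combinatorial identification in your first step is the standard complementarity of beta-sets, $\{\sigma_a+n-a\}\sqcup\{n-1+b-\sigma^t_b\}=\{0,\dots,N-1\}$, and your bookkeeping of the shifts is right: $I(\sigma)_i$ has exponent set $\{\beta_a-\tfrac{n-1}{2}\}$, while $\tfrac N2-I(\sigma^t)_j$ gives $\{\tfrac{n-1}{2}+b-\sigma^t_b\}$, which is exactly the complement translated by $-\tfrac{n-1}{2}$ (I verified this on small cases, e.g.\ $n=2$, $k=1$, $\sigma=(1,0)$). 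The resulting factorisation $\prod_i(1-x_it)\prod_j(1+y_jt)=1-ct^N$ then yields $h_r(x)=e_r(y)$ for $0\le r\le N-1$, and since every Jacobi--Trudi index $\lambda_i-i+j$ is at most $\lambda_1+n-1\le N-1$ for $\lambda\in\mathfrak P_{\le n,k}$, the entrywise substitution into the $n\times n$ determinant followed by the N\"agelsbach--Kostka identity (legitimate because $n\ge\ell(\lambda)=(\lambda^t)_1$) closes the argument. One small point of hygiene: since $I$ takes half-integer values you are implicitly fixing a square root of $\zeta$; the convention $\zeta^{1/2}=e^{\pi i/N}$, which makes $\zeta^{N/2}=-1$ as you use, is the consistent one and matches the paper's usage. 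This is a genuine gain over the paper: it makes the projection argument of Section 4 self-contained rather than dependent on an external reference.
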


Recall the bijection $P_{k}^{+}\rightarrow \mathfrak{P}_{\leq n-1,k}$
defined in (\ref{weight2part}) for $\widehat{\mathfrak{sl}}(n)_{k}$ weights,
i.e. each $\hat{\lambda}\in P_{k}^{+}$ uniquely corresponds to a partition $%
\lambda \in \mathfrak{P}_{\leq n-1,k}$ whose associated Young diagram has at
most $n-1$ rows and $k$ columns. Likewise there exists a bijection between
the $\widehat{\mathfrak{sl}}(k)_{n}$ weights, which we denote by $\tilde{P}%
_{+}^{n}$, and $\mathfrak{P}_{\leq k-1,n}$. Employing these two bijections
we shall henceforth label the $\widehat{\mathfrak{sl}}(n)_{k}$-fusion
coefficients $\mathcal{N}$ and the $\widehat{\mathfrak{sl}}(k)_{n}$-fusion
coefficients $\mathcal{\tilde{N}}$ by the respective partitions rather than
the affine weights to unburden the notation, e.g.%
\begin{equation}
\mathcal{N}_{\hat{\lambda}\hat{\mu}}^{(k)\hat{\nu}}\rightarrow \mathcal{N}%
_{\lambda \mu }^{\nu },\qquad \hat{\lambda},\hat{\mu},\hat{\nu}\in
P_{k}^{+}\;\;\text{and\ \ }\lambda ,\mu ,\nu \in \mathfrak{P}_{\leq n-1,k}\;.
\end{equation}

Given a weight $\hat{\lambda}=\sum_{i}m_{i}\hat{\omega}_{i}$ in $P_{k}^{+}$
the map $\limfunc{rot}:P_{k}^{+}\rightarrow P_{k}^{+}$ defined via%
\begin{equation}
\hat{\lambda}\mapsto \limfunc{rot}\hat{\lambda}:=\sum_{i\in \mathbb{Z}%
_{n}}m_{i+1}\hat{\omega}_{i}  \label{rotdef}
\end{equation}%
is the Dynkin diagram automorphism of $\widehat{\mathfrak{sl}}(n)$ of order $%
n$. We denote by $\widetilde{\limfunc{rot}}$ its $\widehat{\mathfrak{sl}}(k)$
counterpart. Because of the bijection between weights and partitions
respectively Young diagrams the automorphism (\ref{rotdef}) induces a map $%
\limfunc{rot}:\mathfrak{P}_{\leq n-1,k}\rightarrow \mathfrak{P}_{\leq n-1,k}$
which acts by adding a top row of width $k$ and then deleting all columns of
height $n$ in the resulting diagram. In the course of our discussion we will
also need the $\mathfrak{\widehat{gl}}(n+k)$ automorphism $\func{Rot}$ which
we interpret as the following map $\mathfrak{P}_{\leq n,k}\rightarrow
\mathfrak{P}_{\leq n,k}$,
\begin{equation}
\limfunc{Rot}(\lambda )=\left\{
\begin{array}{cc}
(\lambda _{1}-1,\ldots ,\lambda _{k}-1), & \text{if }w_{1}(\lambda )=0 \\
(n,\lambda _{1},\lambda _{2},\ldots ,\lambda _{k-1}), & \text{if }%
w_{1}(\lambda )=1%
\end{array}%
\right. \;.  \label{Rot}
\end{equation}

To prove the identity (\ref{curious}) we also employ the known expressions
for the Gromov-Witten invariants and fusion coefficients in terms of the
Bertram-Vafa-Intrilligator (see \cite{Rietsch} for this particular explicit
presentation),%
\begin{equation}
C_{\lambda \mu }^{\nu ,d}=\frac{1}{(k+n)^{n}}\sum_{\sigma \in \mathfrak{P}%
_{\leq n,k}}\frac{s_{\lambda }(\zeta ^{-I(\sigma )})s_{\mu }(\zeta
^{-I(\sigma )})s_{\nu }(\zeta ^{I(\sigma )})}{\prod_{i<j}|\zeta
^{I_{i}(\sigma )}-\zeta ^{I_{j}(\sigma )}|^{-2}}~,  \label{BVI}
\end{equation}
and the Verlinde formula for the $\mathfrak{\widehat{sl}}(n)_k$-WZNW fusion
ring (see e.g. \cite{CFTbook}),%
\begin{equation}
\mathcal{N}_{\hat{\lambda}\hat{\mu}}^{(k)\hat{\nu}}=\frac{1}{n(k+n)^{n-1}}%
\sum_{\sigma \in \mathfrak{P}_{\leq n-1,k}}\frac{s_{\lambda }(\zeta
^{|\sigma |}\zeta ^{-I(\sigma )})s_{\mu }(\zeta ^{|\sigma |}\zeta
^{-I(\sigma )})s_{\nu }(\zeta ^{-|\sigma |}\zeta ^{I(\sigma )})}{%
\prod_{i<j}^{n}|\zeta ^{I_{i}(\sigma )}-\zeta ^{I_{j}(\sigma )}|^{-2}}
\label{Verlinde}
\end{equation}
respectively. Here we have used in (\ref{Verlinde}) the Kac-Peterson formula
for the modular S-matrix \cite{KacPeterson}. Both expressions can be derived
in a purely combinatorial setting; see \cite{ckcs}.

We split the proof of the formula (\ref{curious}) into two parts. First we
show the following:

\begin{claim}
Let $\lambda ,\mu ,\nu \in \mathfrak{P}_{\leq n,k}$ then the Gromov-Witten
invariant $C_{\lambda \mu }^{\nu ,d}$ can be written as the following sum of
two fusion coefficients%
\begin{equation}
C_{\lambda \mu }^{\nu ,d}=\frac{n}{k+n}~\mathcal{N}_{\lambda ^{\prime }\mu
^{\prime }}^{\limfunc{rot}^{d}(\nu ^{\prime })}+\frac{k}{k+n}\mathcal{\tilde{%
N}}_{(\lambda ^{\prime \prime })^{t}(\mu ^{\prime \prime })^{t}}^{\widetilde{%
\limfunc{rot}}^{d}(\nu ^{\prime \prime })^{t}}\;,  \label{GWdecomp}
\end{equation}%
where $\limfunc{rot},\widetilde{\limfunc{rot}}$ are the Dynkin diagram
automorphisms introduced earlier.
\end{claim}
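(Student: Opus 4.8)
The statement to prove, equation \eqref{GWdecomp}, expresses a Gromov-Witten invariant as a weighted sum of an $\widehat{\mathfrak{sl}}(n)_k$ fusion coefficient and an $\widehat{\mathfrak{sl}}(k)_n$ fusion coefficient. Since all three quantities have closed-form expressions as character sums --- \eqref{BVI} for $C_{\lambda\mu}^{\nu,d}$ and \eqref{Verlinde} for $\mathcal{N}$ and (with $n\leftrightarrow k$) for $\mathcal{\tilde N}$ --- the natural strategy is to manipulate the sum in \eqref{BVI} directly and show that it splits into the two pieces on the right of \eqref{GWdecomp}. The key combinatorial input is the parametrisation of the index set $\mathfrak{P}_{\leq n,k}$ appearing in \eqref{BVI}: via the bijection \eqref{part2word} each $\sigma\in\mathfrak{P}_{\leq n,k}$ corresponds to an $n$-subset $\{\ell_1(\sigma),\dots,\ell_n(\sigma)\}\subseteq\{1,\dots,n+k\}$, equivalently (after the shift in \eqref{Imap}) to an $n$-subset of the $(n+k)$-th roots of unity. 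So the sum in \eqref{BVI} is, up to the symmetric-function prefactors, a sum over all $n$-element subsets of $\mathbb{Z}_{n+k}$.

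\textbf{Main steps.} First I would rewrite \eqref{BVI} so that the summation variable is literally an $n$-subset $S\subseteq\mathbb{Z}_{n+k}$ of the roots of unity, absorbing the shifts from \eqref{Imap} into an overall phase; this phase is where the factors $(-1)^{d(n-1)}$ and the rotation $\operatorname{rot}^d$ will eventually come from, via the quasi-periodicity already exploited in \eqref{qpbc2} and in the derivation of \eqref{GWreduce}. Second, I would classify each $n$-subset $S$ by whether it contains a distinguished site (say site $n+k$, corresponding to $w_1=0$ versus $w_1=1$ in the notation of \eqref{Rot}): subsets avoiding that site are in bijection with $(n)$-subsets of $\mathbb{Z}_{n+k-1}$ and will feed the $\widehat{\mathfrak{sl}}(k)_n$ Verlinde sum \eqref{Verlinde} (with $n\leftrightarrow k$), while subsets containing it are in bijection with $(n-1)$-subsets of $\mathbb{Z}_{n+k-1}$ and will feed the $\widehat{\mathfrak{sl}}(n)_k$ Verlinde sum. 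Third --- and this is the crucial identification --- on each piece I would use Lemma \ref{technical} (Rietsch's identity $s_\lambda(\zeta^{I(\sigma)})=s_{\lambda^t}(\zeta^{-I(\sigma^t)})$) to convert Schur polynomials of $\lambda$ on $n$ variables into Schur polynomials of $\lambda^t$ (equivalently of the column/row reductions $\lambda',\lambda''$) on $n-1$ or $k-1$ variables, matching exactly the arguments $\zeta^{|\sigma|}\zeta^{-I(\sigma)}$ that appear in \eqref{Verlinde}. The extra $\zeta^{\pm|\sigma|}$ twist in \eqref{Verlinde} is precisely the shift produced when one of the $n$ roots in $S$ is the fixed site and is removed; tracking it carefully produces the $\operatorname{rot}^d$ on the $\nu$ side. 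Finally, counting the normalisations: \eqref{BVI} carries $(k+n)^{-n}$ while \eqref{Verlinde} carries $\big(n(k+n)^{n-1}\big)^{-1}$, and the two sub-sums have $\binom{n+k-1}{n}$ and $\binom{n+k-1}{n-1}$ terms respectively; reconciling these gives exactly the coefficients $\frac{n}{k+n}$ and $\frac{k}{k+n}$ in \eqref{GWdecomp}.

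\textbf{The hard part.} The genuine obstacle is bookkeeping of the phases under the root-of-unity shift: showing that when one removes the fixed site from $S$ the resulting character sum is not merely \emph{some} fusion coefficient but specifically $\mathcal{N}_{\lambda'\mu'}^{\operatorname{rot}^d(\nu')}$, with the correct power $d$ of the Dynkin automorphism and the correct sign. This amounts to verifying that the ``winding'' picked up by the $n$-subset as it is translated around $\mathbb{Z}_{n+k}$ --- equivalently the number $d=\#\{i\mid \ell_i(\mu)+t_i>N\}$ from \eqref{GWvev}, respectively the $d$ in \eqref{GWreduce} --- matches the power of $\operatorname{rot}$ acting on $\nu'$ under the bijection \eqref{weight2part}; the automorphism $\operatorname{rot}$ there is realised as ``add a row of width $k$, delete columns of height $n$'', and I would relate this directly to the cyclic shift $\sigma\mapsto\sigma$ induced on $I(\sigma)$ by multiplication by $\zeta$, using the map $\operatorname{Rot}$ of \eqref{Rot} as the intermediary. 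I expect the sign $(-1)^{d(n-1)}$ to emerge from reordering the Vandermonde-type denominator $\prod_{i<j}|\zeta^{I_i}-\zeta^{I_j}|^{-2}$ after the shift, exactly as the fermionic reordering sign $(-1)^{\ell(\pi)}$ appeared in the proof of Corollary \ref{maincor}. Once the phases are pinned down, the split of the index set and the Schur-polynomial reductions via Lemma \ref{technical} are essentially bookkeeping.
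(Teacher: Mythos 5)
Your plan follows the paper's proof essentially step for step: the paper likewise splits the Bertram--Vafa--Intriligator sum \eqref{BVI} into the terms with $\sigma\in\mathfrak{P}_{\leq n-1,k}$ and those with $\sigma$ of height exactly $n$ (your occupied/unoccupied distinguished site), identifies the first piece with the $\widehat{\mathfrak{sl}}(n)_k$ Verlinde sum by using the Pieri rule ($e_n(\zeta^{I(\sigma)})=\zeta^{|\sigma|}$, $h_k(\zeta^{I(\sigma)})=\zeta^{-|\sigma|}$) to strip $n$-columns and produce $\operatorname{rot}^d(\nu')$, and transposes the second piece via Lemma \ref{technical} to land on the $\widehat{\mathfrak{sl}}(k)_n$ Verlinde sum. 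The only slip is cosmetic: the weights $\frac{n}{k+n}$ and $\frac{k}{k+n}$ come from comparing the normalisation $\frac{1}{(k+n)^{n}}$ in \eqref{BVI} with $\frac{1}{n(k+n)^{n-1}}$ in \eqref{Verlinde} (together with the Vandermonde transposition identity for the second piece), not from the cardinalities of the two index sets.
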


\begin{proof}
To derive (\ref%
{GWdecomp}) we start by writing the Bertram-Vafa-Intrilligator formula (\ref%
{BVI}) for $C_{\lambda \mu }^{\nu ,d}$ as two separate sums,%
\begin{multline}
C_{\lambda ,\mu }^{\nu ,d}=\frac{1}{(k+n)^{n}}\sum_{\sigma \in \mathfrak{P}%
_{\leq n-1,k}}\frac{s_{\lambda }(\zeta ^{-I(\sigma )})s_{\mu }(\zeta
^{-I(\sigma )})s_{\nu }(\zeta ^{I(\sigma )})}{\prod_{i<j}|\zeta
^{I_{i}(\sigma )}-\zeta ^{I_{j}(\sigma )}|^{-2}} \\
+\frac{1}{(k+n)^{n}}\sum_{\sigma \in \mathfrak{P}_{\leq n,k}/\mathfrak{P}%
_{\leq n-1,k}}\frac{s_{\lambda ^{t}}(\zeta ^{I(\sigma ^{t})})s_{\mu
^{t}}(\zeta ^{I(\sigma ^{t})})s_{\nu ^{t}}(\zeta ^{-I(\sigma ^{t})})}{%
\prod_{i<j}|\zeta ^{I_{i}(\sigma )}-\zeta ^{I_{j}(\sigma )}|^{-2}}\; ,
\label{BVI2}
\end{multline}%
i.e. the first sum runs over all partitions $\sigma $ in the bounding box of
height $n-1$ and width $k$ and the second sum over all $\sigma $ which have $%
n$ rows and at most $k$ columns.

We are now rewriting the first sum with $\sigma \in \mathfrak{P}_{\leq
n-1,k} $ as a fusion coefficient. Observe that the Pieri-formula for Schur
polynomials \cite{MacDonald} implies that multiplying $s_{\nu ^{\prime
}}(\zeta ^{I(\sigma )})$ with $e_{n}(\zeta ^{I(\sigma )})=\zeta ^{|\sigma |}$
yields the Schur polynomial with the partition $\nu ^{\prime }$ plus an
additional $n$-column. Hence, $s_{\nu }(\zeta ^{I(\sigma )})=\zeta ^{|\sigma
|m_{n}(\nu )}s_{\nu ^{\prime }}(\zeta ^{I(\sigma )})$, where $m_{n}(\nu )$
is the number of $n$-columns in $\nu $. Similarly, it follows from the second
Pieri formula for Schur polynomials that multiplying $s_{\nu ^{\prime }}(\zeta ^{I(\sigma )})$
with
\begin{equation*}
h_{k}(\zeta ^{I(\sigma )})=s_{(k)}(\zeta ^{I(\sigma )})=s_{(1^{k})}(\zeta
^{-I(\sigma ^{t})})=e_{k}(\zeta ^{-I(\sigma ^{t})})=\zeta ^{-|\sigma |}
\end{equation*}%
corresponds to adding a row of width $k$ to $\nu ^{\prime }$. Here we have
used Lemma \ref{technical} to rewrite the $k^{\text{th}}$ complete symmetric polynomial. Both formulae then allow us to describe the
action of the Dynkin diagram automorphism $\limfunc{rot}$ (adding a $k$-row
and then removing all columns with $n$-boxes),%
\begin{eqnarray*}
s_{\limfunc{rot}^{d}(\nu ^{\prime })}(\zeta ^{I(\sigma )}) &=&\zeta
^{-|\sigma |d-(m_{n-1}(\nu )+\cdots +m_{n-d}(\nu ))|\sigma |}s_{\nu ^{\prime
}}(\zeta ^{I(\sigma )}) \\
&=&\zeta ^{-\frac{k+n}{n}|\sigma |d}\zeta ^{-\frac{|\nu ^{\prime }|-|%
\limfunc{rot}^{d}(\nu ^{\prime })|}{n}|\sigma |}s_{\nu ^{\prime }}(\zeta
^{I(\sigma )}),
\end{eqnarray*}%
where
\begin{eqnarray*}
|\limfunc{rot}\nolimits^{d}(\nu ^{\prime })| &=&|\nu |+kd-n(m_{n}(\nu
)+\cdots +m_{n-d}(\nu )) \\
&=&|\nu ^{\prime }|+kd-n(m_{n-1}(\nu )+\cdots +m_{n-d}(\nu ))\;.
\end{eqnarray*}%
Therefore, we may rewrite the product of Schur functions in the first sum of
(\ref{BVI2}) as%
\begin{multline*}
s_{\lambda }(\zeta ^{-I(\sigma )})s_{\mu }(\zeta ^{-I(\sigma )})s_{\nu
}(\zeta ^{I(\sigma )})= \\
\zeta ^{\frac{k+n}{n}|\sigma |d}\zeta ^{\frac{|\nu |-|\limfunc{rot}%
\nolimits^{d}(\nu ^{\prime })|}{n}|\sigma |}s_{\lambda }(\zeta ^{-I(\sigma
)})s_{\mu }(\zeta ^{-I(\sigma )})s_{\limfunc{rot}\nolimits^{d}(\nu ^{\prime
})}(\zeta ^{I(\sigma )})= \\
\zeta ^{\frac{|\lambda |+|\mu |-|\limfunc{rot}\nolimits^{d}(\nu ^{\prime })|%
}{n}|\sigma |}s_{\lambda }(\zeta ^{-I(\sigma )})s_{\mu }(\zeta ^{-I(\sigma
)})s_{\limfunc{rot}\nolimits^{d}(\nu ^{\prime })}(\zeta ^{I(\sigma )})= \\
\zeta ^{\frac{|\lambda ^{\prime }|+|\mu ^{\prime }|-|\limfunc{rot}%
\nolimits^{d}(\nu ^{\prime })|}{n}|\sigma |}s_{\lambda ^{\prime }}(\zeta
^{-I(\sigma )})s_{\mu ^{\prime }}(\zeta ^{-I(\sigma )})s_{\limfunc{rot}%
\nolimits^{d}(\nu ^{\prime })}(\zeta ^{I(\sigma )})\;.
\end{multline*}%
Thus, the first sum in the decomposition (\ref{BVI2}) becomes%
\begin{multline*}
\frac{1}{(k+n)^{n}}\sum_{\sigma \in \mathfrak{P}_{\leq n-1,k}}\frac{%
s_{\lambda }(\zeta ^{-I(\sigma )})s_{\mu }(\zeta ^{-I(\sigma )})s_{\nu
}(\zeta ^{I(\sigma )})}{\prod_{i<j}^{n}|\zeta ^{I_{i}(\sigma )}-\zeta
^{I_{j}(\sigma )}|^{-2}}= \\
\frac{1}{(k+n)^{n}}\sum_{\sigma \in \mathfrak{P}_{\leq n-1,k}}\zeta
^{|\sigma |\frac{|\lambda ^{\prime }|+|\mu ^{\prime }|-|\limfunc{rot}%
\nolimits^{d}(\nu ^{\prime })|}{n}}\frac{s_{\lambda ^{\prime }}(\zeta
^{-I(\sigma )})s_{\mu ^{\prime }}(\zeta ^{-I(\sigma )})s_{\limfunc{rot}%
\nolimits^{d}(\nu ^{\prime })}(\zeta ^{I(\sigma )})}{\prod_{i<j}^{n}|\zeta
^{I_{i}(\sigma )}-\zeta ^{I_{j}(\sigma )}|^{-2}}= \\
\frac{n}{k+n}~\mathcal{N}_{\lambda ^{\prime }\mu ^{\prime }}^{(k),\limfunc{%
rot}\nolimits^{d}(\nu ^{\prime })}.
\end{multline*}%
Let us now turn to the second sum in (\ref{BVI2}) where the Young diagram of
$\sigma \in \mathfrak{P}_{\leq n,k}/\mathfrak{P}_{\leq n-1,k}$ has height $n$%
. Set $\tilde{\sigma}^{t}=(\sigma _{1}-1,\ldots ,\sigma _{n}-1)\in \mathfrak{%
P}_{\leq n,k-1}$ then we calculate using Lemma \ref{technical}%
\begin{multline*}
\frac{1}{(k+n)^{n}}\sum_{\sigma \in \mathfrak{P}_{\leq n,k}/\mathfrak{P}%
_{\leq n-1,k}}\frac{s_{\lambda }(\zeta ^{-I(\sigma )})s_{\mu }(\zeta
^{-I(\sigma )})s_{\nu }(\zeta ^{I(\sigma )})}{\prod_{i<j}^{n}|\zeta
^{I_{i}(\sigma )}-\zeta ^{I_{j}(\sigma )}|^{-2}}= \\
\frac{k}{k+n}\frac{1}{k(k+n)^{k-1}}\sum_{\tilde{\sigma}\in \mathfrak{P}%
_{\leq k-1,n}}\frac{s_{\lambda ^{t}}(\zeta ^{I(\tilde{\sigma})})s_{\mu
^{t}}(\zeta ^{I(\tilde{\sigma})})s_{\nu ^{t}}(\zeta ^{-I(\tilde{\sigma})})}{%
\prod_{i<j}^{k}|\zeta ^{I_{i}(\tilde{\sigma})}-\zeta ^{I_{j}(\tilde{\sigma}%
)}|^{-2}},
\end{multline*}%
where we have used that%
\begin{equation*}
\frac{1}{k(k+n)^{n-1}}\prod_{1\leq i<j\leq n-1}|\zeta ^{I_{i}(\sigma
)}-\zeta ^{I_{j}(\sigma )}|^{2}=\frac{1}{k(k+n)^{k-1}}\prod_{1\leq i<j\leq
k}|\zeta ^{I_{i}(\sigma ^{t})}-\zeta ^{I_{j}(\sigma ^{t})}|^{2}
\end{equation*}%
as well as%
\begin{equation*}
s_{\lambda }(\zeta ^{-I(\sigma )})s_{\mu }(\zeta ^{-I(\sigma )})s_{\nu
}(\zeta ^{I(\sigma )})=s_{\lambda ^{t}}(\zeta ^{I(\tilde{\sigma})})s_{\mu
^{t}}(\zeta ^{I(\tilde{\sigma})})s_{\nu ^{t}}(\zeta ^{-I(\tilde{\sigma})}).
\end{equation*}%
The second identity is true because of Lemma \ref{technical} and because $%
|\lambda |+|\mu |-|\nu |=0\func{mod}N$. Recall that the Gromov-Witten
invariant is only nonzero provided this last condition holds. Hence, we see
that the second sum in \eqref{GWdecomp} is of the same form as the first sum
upon interchanging $n$ and $k$ and taking the complex conjugate. Thus,
running through exactly the same computation as before with $k$ and $n$
interchanged we arrive at (\ref{GWdecomp}) by exploiting the reality of the
fusion coefficients, $\mathcal{\tilde{N}}_{\lambda \mu }^{\nu }=\overline{%
\mathcal{\tilde{N}}_{\lambda \mu }^{\nu }}$. The latter follows here from
the definition (\ref{fusiondef}) of the fusion ring where we identified the
structure constants with dimensions of certain moduli spaces, but it can
also be proved combinatorially; see \cite{ckcs}.
\end{proof}

The identity (\ref{GWdecomp}) between Gromov-Witten invariants and fusion
coefficients can be further simplified to obtain (\ref{curious}) by
exploiting rotation invariance and the known level-rank duality between the
fusion coefficients of the $\widehat{\mathfrak{sl}}(n)_{k}$ and $\widehat{%
\mathfrak{sl}}(k)_{n}$-Verlinde algebras.

\begin{proposition}[rotation invariance \& level-rank duality]
\label{fusionsymm}One has the following equalities:

\begin{itemize}
\item[(i)] $\mathcal{N}_{\limfunc{rot}(\lambda )\mu }^{\nu }=\mathcal{N}%
_{\lambda \limfunc{rot}(\mu )}^{\nu }=\mathcal{N}_{\lambda \mu }^{\limfunc{%
rot}^{-1}(\nu )}$

\item[(ii)] Let $\lambda ,\mu ,\nu \in \mathfrak{P}_{\leq n-1,k}$ and $%
(\lambda ^{t})^{\prime },(\mu ^{t})^{\prime },(\nu ^{t})^{\prime }\in
\mathfrak{P}_{\leq k-1,n}$ be the $k$-column reductions of the transposed
partitions, then%
\begin{equation*}
\mathcal{N}_{\lambda \mu }^{\nu }=\mathcal{\tilde{N}}_{(\lambda
^{t})^{\prime }(\mu ^{t})^{\prime }}^{\widetilde{\limfunc{rot}}^{\hat{d}%
}(\nu ^{t})^{\prime }},\qquad \hat{d}=\frac{|\lambda |+|\mu |-|\nu |}{n}~.
\end{equation*}
\end{itemize}
\end{proposition}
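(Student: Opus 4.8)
The plan is to read both identities off the Verlinde expression \eqref{Verlinde} for the fusion coefficients, re-using the evaluations carried out in the proof of the Claim above; both are classical facts — (i) is the simple-current grading of the $\widehat{\mathfrak{sl}}(n)_{k}$ fusion ring, (ii) is its level--rank duality with $\widehat{\mathfrak{sl}}(k)_{n}$, see e.g.\ \cite{CFTbook}, \cite{Kac} and, for combinatorial proofs in the present language, \cite{ckcs} — so the point is to phrase the argument in the notation of this section. For (i): the map $\limfunc{rot}$ on weights is induced by the basic simple current $J$ of $\widehat{\mathfrak{sl}}(n)_{k}$, which acts on the modular $S$-matrix by $S_{\limfunc{rot}(\lambda),\sigma}=\varepsilon(\sigma)\,S_{\lambda,\sigma}$ with $\varepsilon(\sigma)$ the monodromy charge of $\sigma$ — a fixed $n$-th root of unity \emph{independent of $\lambda$}. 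In the combinatorial set-up this is the cyclic ``straightening'' of Schur polynomials modulo the Bethe-ansatz relations used in Section~\ref{subsec:dualrimhook} (compare \eqref{Schurpolyrep}): evaluating at the points of \eqref{Verlinde}, the operation ``add a $k$-row, then delete all $n$-columns'' turns $s_{\lambda}$ into $\varepsilon(\sigma)\,s_{\lambda}$, the overall factor picked up in the straightening of the Jacobi--Trudi determinant being the same for every $\lambda$. Since $S$ is symmetric and unitary and $\overline{\varepsilon(\sigma)}=\varepsilon(\sigma)^{-1}$, substituting $\limfunc{rot}(\lambda)$ for $\lambda$, or $\limfunc{rot}(\mu)$ for $\mu$, or $\limfunc{rot}^{-1}(\nu)$ for $\nu$ in \eqref{Verlinde} multiplies every summand by the same factor $\varepsilon(\sigma)$, whence the three sums agree; this is (i).

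For (ii) the plan is to transform the Verlinde sum \eqref{Verlinde} for $\mathcal{N}_{\lambda\mu}^{\nu}$, a sum over $\sigma\in\mathfrak{P}_{\leq n-1,k}$, into the corresponding sum for $\widehat{\mathfrak{sl}}(k)_{n}$. First I would pull the $\zeta^{\pm|\sigma|}$ scalars out of the three Schur factors by homogeneity; then apply Lemma \ref{technical} to replace $s_{\lambda},s_{\mu},s_{\nu}$ at $\zeta^{\mp I(\sigma)}$ by $s_{\lambda^{t}},s_{\mu^{t}},s_{\nu^{t}}$ at $\zeta^{\pm I(\sigma^{t})}$; and replace the normalisation $\tfrac{1}{n(k+n)^{n-1}}\prod_{i<j}|\zeta^{I_{i}(\sigma)}-\zeta^{I_{j}(\sigma)}|^{2}$ by $\tfrac{1}{k(k+n)^{k-1}}\prod_{i<j}|\zeta^{I_{i}(\sigma^{t})}-\zeta^{I_{j}(\sigma^{t})}|^{2}$ using an identity of the type already verified in the proof of the Claim. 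The roots of unity produced along the way are then absorbed, via the phase formula of the proof of the Claim applied with $n$ and $k$ interchanged, into the $k$-column reductions $\lambda^{t}\mapsto(\lambda^{t})'$, $\mu^{t}\mapsto(\mu^{t})'$ of the first two factors and the twisted reduction $\nu^{t}\mapsto\widetilde{\limfunc{rot}}^{\,\hat{d}}(\nu^{t})'$ of the third, with $\hat{d}=(|\lambda|+|\mu|-|\nu|)/n$; when $\hat{d}$ is not an integer the $\mathbb{Z}_{n}$-charge is not conserved and both fusion coefficients vanish, so the identity is trivial there. What is left after re-indexing is precisely the Verlinde formula \eqref{Verlinde} for $\widehat{\mathfrak{sl}}(k)_{n}$ evaluated on $(\lambda^{t})',(\mu^{t})',\widetilde{\limfunc{rot}}^{\,\hat d}(\nu^{t})'$, i.e.\ the asserted $\mathcal{\tilde{N}}_{(\lambda^{t})'(\mu^{t})'}^{\widetilde{\limfunc{rot}}^{\,\hat d}(\nu^{t})'}$; throughout I would use the reality $\mathcal{\tilde{N}}_{ab}^{c}=\overline{\mathcal{\tilde{N}}_{ab}^{c}}$ noted at the end of the proof of the Claim.

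The hard part is this last re-indexing. The transpose carries $\mathfrak{P}_{\leq n-1,k}$ bijectively onto $\mathfrak{P}_{\leq k,n-1}$, of cardinality $\binom{n+k-1}{k}$, whereas the $\widehat{\mathfrak{sl}}(k)_{n}$ Verlinde formula sums over $\mathfrak{P}_{\leq k-1,n}$, of cardinality $\binom{n+k-1}{k-1}$; these differ unless $n=k$, so the two sums cannot be compared term by term. The reconciliation factors the index set through its $\widetilde{\limfunc{rot}}$-orbits and rests on the elementary count $k\binom{n+k-1}{k}=n\binom{n+k-1}{k-1}$, i.e.\ $k\,|\mathfrak{P}_{\leq n-1,k}|=n\,|\mathfrak{P}_{\leq k-1,n}|$, which is exactly what balances the prefactors $\tfrac{1}{n}$ and $\tfrac{1}{k}$ (the remaining powers of $k+n$ being accounted for by the Vandermonde identity above) — the same bookkeeping that produced the weights $\tfrac{n}{k+n}$ and $\tfrac{k}{k+n}$ in \eqref{GWdecomp}. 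Checking that all the spurious phases cancel and that exactly the twist $\widetilde{\limfunc{rot}}^{\,\hat d}$ survives is then a long but routine computation of the same kind as the one already done for the Claim.
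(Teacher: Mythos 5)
The paper does not actually prove this proposition: immediately after stating it, it says ``We shall accept these results here without proof'' and points to the textbook literature (e.g.\ \cite{CFTbook}), so there is no in-paper argument to compare yours against line by line. Your sketch of (i) is the standard simple-current argument and is sound: in the Verlinde sum the substitution $\lambda\mapsto\limfunc{rot}(\lambda)$ multiplies the summand indexed by $\sigma$ by a root of unity $\varepsilon(\sigma)$ depending only on $\sigma$ (the monodromy charge), and the complex-conjugated third slot absorbs the inverse phase, so all three sums coincide. One caution: if you justify the $\lambda$-independence of $\varepsilon(\sigma)$ via ``add a $k$-row, delete $n$-columns'' at the evaluation points, you must work at the points where $e_{n}=1$ (the normalisation used in \eqref{Verlinde} and \eqref{dualBAEsol}), not at the bare $\zeta^{I(\sigma)}$ used in the proof of the Claim, where the analogous factor $\zeta^{-|\sigma|(1+m_{n-1}(\lambda))}$ visibly depends on $\lambda$.

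For (ii) you have correctly located the real difficulty, and it is more than the ``long but routine computation'' you defer. The Claim's second sum avoids the issue because the set of $\sigma\in\mathfrak{P}_{\leq n,k}$ with exactly $n$ rows is in honest bijection with $\mathfrak{P}_{\leq k-1,n}$ via $\sigma\mapsto(\sigma_{1}-1,\ldots,\sigma_{n}-1)^{t}$, so no orbit counting is needed there. For the proposition itself the two Verlinde sums run over sets of different cardinalities, and your proposed reconciliation through $\widetilde{\limfunc{rot}}$-orbits using $k\binom{n+k-1}{k}=n\binom{n+k-1}{k-1}$ is only a counting identity: the $\mathbb{Z}_{n}$-action on $\mathfrak{P}_{\leq n-1,k}$ is not free when $\gcd(n,k)>1$, so matching orbit by orbit requires tracking stabilisers and verifying that the phases $\varepsilon(\sigma)$ sum correctly over each orbit rather than merely that the total weights $\tfrac{1}{n}$ and $\tfrac{1}{k}$ balance. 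As it stands, part (ii) of your proposal is a plausible plan consistent with the textbook derivations the paper cites, but not yet a proof; either carry out the orbit analysis or do as the paper does and cite the level-rank duality literature for this step.
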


We shall accept these results here without proof. Their derivations can be
found in textbooks, see e.g. \cite{CFTbook}.

\begin{proof}[Proof of Proposition \protect\ref{mainprop}]
Given the previous result (\ref{GWdecomp}), the identity (\ref{curious})
follows from Prop \ref{fusionsymm} (ii) provided we can show that
\begin{equation*}
\mathcal{N}_{\lambda ^{\prime }\mu ^{\prime }}^{\limfunc{rot}^{d}(\nu
^{\prime })}=\mathcal{\tilde{N}}_{(\lambda ^{\prime \prime })^{t}(\mu
^{\prime \prime })^{t}}^{\widetilde{\limfunc{rot}}^{d}(\nu ^{\prime \prime
})^{t}}\;.
\end{equation*}%
Let $m_{n}(\nu )$ denote the multiplicity of $n$-columns in the Young
diagram of $\nu $. Then we have the identity%
\begin{equation}
\mathfrak{P}_{\leq k-1,n-1}\ni ((\nu ^{\prime })^{t})^{\prime }=\widetilde{%
\limfunc{rot}}^{-m_{n}(\nu )}(\nu ^{\prime \prime })^{t}\;,
\label{doublereduction}
\end{equation}%
where the first prime indicates removal of all $n$-columns and the second
one removal of all $k$-columns. If $m_{n}(\nu )=0$ this identity is obvious.
For $m_{n}(\nu )>0$ the removal of a column of $n$-boxes in the Young
diagram of $\nu $ obviously corresponds to subtracting a row of $n$-boxes in
the transpose diagram. The latter corresponds to the action of $\widetilde{%
\limfunc{rot}}^{-1}$. This generalizes to the relationship%
\begin{equation}
\sigma :=((\limfunc{rot}\nolimits^{d}(\nu ^{\prime })^{t})^{^{\prime }}=%
\widetilde{\limfunc{rot}}^{-m_{n}(\nu )-\cdots -m_{n-d}(\nu )}(\nu ^{\prime
\prime })^{t},  \label{rotten}
\end{equation}%
because each action of the automorphism $\limfunc{rot}$ involves a column
reduction after adding a row of $k$-boxes. Observing that%
\begin{eqnarray*}
m_{n}(\nu )+\cdots +m_{n-d}(\nu ) &=&\frac{k}{n}~d+\frac{|\nu |-|\limfunc{rot%
}\nolimits^{d}(\nu ^{\prime })|}{n} \\
&=&\hat{d}^{\prime }+m_{n}(\lambda )+m_{n}(\mu )-d
\end{eqnarray*}%
with%
\begin{equation*}
\hat{d}^{\prime }=\frac{|\lambda ^{\prime }|+|\mu ^{\prime }|-|\limfunc{rot}%
\nolimits^{d}(\nu ^{\prime })|}{n}\quad \;\text{and}\quad \text{\ }d=\frac{%
|\lambda |+|\mu |-|\nu |}{k+n}
\end{equation*}%
we find by employing (\ref{doublereduction}), Proposition \ref{fusionsymm}
and then \eqref{rotten}
\begin{equation}
\mathcal{N}_{\lambda ^{\prime }\mu ^{\prime }}^{\func{rot}^{d}(\nu ^{\prime
})}\overset{\eqref{doublereduction}}{=}\mathcal{\tilde{N}}_{\widetilde{%
\limfunc{rot}}^{-m_{n}(\lambda )}(\lambda ^{\prime \prime })^{t}\widetilde{%
\limfunc{rot}}^{-m_{n}(\mu )}(\mu ^{\prime \prime })^{t}}^{\widetilde{%
\limfunc{rot}}^{\hat{d}^{\prime }}(n)}\overset{\text{Prop}\;\ref{fusionsymm}%
,\,\eqref{rotten}}{=}\mathcal{\tilde{N}}_{(\lambda ^{\prime \prime
})^{t}(\mu ^{\prime \prime })^{t}}^{\widetilde{\limfunc{rot}}^{d}(\nu
^{\prime \prime })^{t}}
\end{equation}%
the desired relation.
\end{proof}

\begin{example}\label{projectionex}{\rm
Consider the product expansion \eqref{ppex} in the quantum cohomology ring from the previous Example \ref{fprodex}. Applying the reduction
formula (\ref{curious}) for $\widehat{\mathfrak{sl}}(n)_{k}$ we find that $%
\lambda ^{\prime }=(2,2,1,0)=\mu =\mu ^{\prime }$ and%
\begin{multline*}
\Yvcentermath1\yng(2,2,1)~\ast ~\yng(2,2,1)= \\
\Yvcentermath1\func{rot}\yng(1,1,1)+2\func{rot}\yng(2,1)+\func{rot}\yng%
(3,2,2)+\func{rot}\yng(3,3,1)+\func{rot}^{2}(\emptyset )= \\
\Yvcentermath1\yng(2)+2~\yng(3,2,1)+\yng(1,1)+\yng(2,2,2)+\yng(3,3)\;.
\end{multline*}%
On the other hand if we apply instead the reduction to the $\widehat{%
\mathfrak{sl}}(k)_{n}$ fusion product $\tilde{\ast}$ with $\lambda
\rightarrow (\lambda ^{\prime \prime })^{t}=(2,1)$ and $\mu \rightarrow (\mu
^{\prime \prime })^{t}=(3,2)$ the product expansion becomes%
\begin{equation*}
\Yvcentermath1\yng(2,1)~\tilde{\ast}~\yng(3,2)= \\
\Yvcentermath1\yng(1,1)+2~\yng(3,2)+~\yng(2)+~\yng(4,1)+~\yng(4,4)\;.
\end{equation*}%
}
\end{example}

Implicit in the projection formula (\ref{curious}) of Gromov-Witten
invariants onto fusion coefficients is the statement that there exist
identities between different $C_{\lambda \mu }^{\nu ,d}$. Namely, for $%
\lambda ,\mu \in \mathfrak{P}_{\leq n,k}/\mathfrak{P}_{\leq n-1,k}$ the
product expansions of $\sigma _{\lambda }\star \sigma _{\mu },~\sigma
_{\lambda ^{\prime }}\star \sigma _{\mu },~\sigma _{\lambda }\star \sigma
_{\mu ^{\prime }}$ and $\sigma _{\lambda ^{\prime }}\star \sigma _{\mu
^{\prime }}$ in $qH^{\ast }(\limfunc{Gr}_{n,n+k})$ all get mapped onto the
same fusion product expansion in $V_{k}(\widehat{\mathfrak{sl}}(n);\mathbb{Z}%
)$, because according to (\ref{curious}) there must exist $\nu _{1},\ldots
,\nu _{4}$ and $d_{1},\ldots ,d_{4}$ such that%
\begin{equation}
C_{\lambda \mu }^{\nu _{1},d_{1}}=C_{\lambda ^{\prime }\mu }^{\nu
_{2},d_{2}}=C_{\lambda \mu ^{\prime }}^{\nu _{3},d_{3}}=C_{\lambda ^{\prime
}\mu ^{\prime }}^{\nu _{4},d_{4}}=\mathcal{N}_{\lambda ^{\prime }\mu
^{\prime }}^{\nu },
\end{equation}%
where $\nu =\func{rot}^{d_{i}}(\nu _{i}^{\prime })$ for $i=1,\ldots ,4$. In
fact, using rotation invariance of the Gromov-Witten invariants \cite[Prop 11.1 (3)]{ckcs},
$C_{\limfunc{Rot}(\lambda ),\mu }^{\nu ,d}=C_{\lambda ,\limfunc{Rot}(\mu
)}^{\nu ,d^{\prime }}=C_{\lambda ,\mu }^{\limfunc{Rot}\nolimits^{-1}(\nu
),d^{\prime \prime }}$, it is easy to show that%
\begin{equation}
\nu _{2}=\limfunc{Rot}\nolimits^{m_{n}(\lambda )}(\nu _{1}),\;\nu _{3}=%
\limfunc{Rot}\nolimits^{m_{n}(\mu )}(\nu _{1}),\;\nu _{4}=\limfunc{Rot}%
\nolimits^{m_{n}(\lambda )+m_{n}(\mu )}(\nu _{1}),
\end{equation}%
where $m_{n}(\lambda ),m_{n}(\mu )$\ are the number of $n$-columns in $%
\lambda ,\mu $ and the respective degrees $d_{i}$ can be computed according
to the formula $|\limfunc{Rot}\nolimits^{a}(\lambda )|=|\lambda |+N\limfunc{n%
}_{a}(\lambda )-an$ and $Nd_{1}=|\lambda |+|\mu |-|\nu _{1}|$.\\

The reduction formula (\ref{curious}) can be inverted.

\begin{corollary}[`Lifting' of fusion coefficients]
\label{lift}Let $\hat{\lambda},\hat{\mu},\hat{\nu}\in P_{k}^{+}$ and denote
by $\lambda ,\mu ,\nu \in \mathfrak{P}_{\leq n-1,k}$ the images under the
bijection (\ref{weight2part}). Then one has the following `inverse relation'
to (\ref{curious}),
\begin{equation}
\mathcal{N}_{\lambda \mu }^{\nu }=C_{\lambda \mu }^{\limfunc{Rot}^{-\hat{d}%
}(\nu ),d}\;,\qquad \hat{d}=\frac{|\lambda |+|\mu |-|\nu |}{n},
\label{invcurious}
\end{equation}%
where $\limfunc{Rot}:\mathfrak{P}_{\leq n,k}\rightarrow \mathfrak{P}_{\leq
n,k}$ is the $\widehat{\mathfrak{sl}}(n+k)$-Dynkin diagram automorphism of
order $n+k$ and%
\begin{equation}
d=n-\limfunc{n}\nolimits_{N-\hat{d}}(\nu )=\limfunc{n}\nolimits_{\hat{d}%
+1}(\nu ^{\vee })\;.  \label{dinv}
\end{equation}%
In particular, adopting the same conventions as in Theorem \ref{main} we can
write the fusion coefficient as the following `vacuum expectation value' of
the Clifford algebra,%
\begin{equation}
\mathcal{N}_{\lambda \mu }^{\nu }=\sum_{T}(-1)^{d(n+1)}\langle \varnothing
,\psi _{\ell _{n}(\nu )+\hat{d}}\cdots \psi _{\ell _{1}(\nu )+\hat{d}}\psi
_{\ell _{1}(\mu )+t_{n}}^{\ast }\cdots \psi _{\ell _{n}(\mu )+t_{1}}^{\ast
}\varnothing \rangle ,  \label{fermifusion}
\end{equation}%
where all indices are understood modulo $N$.
\end{corollary}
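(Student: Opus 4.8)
The plan is to derive \eqref{invcurious} by \emph{inverting} the projection formula \eqref{curious} of Proposition \ref{mainprop}, and then to read off the Clifford-algebra formula \eqref{fermifusion} from the expression \eqref{GWvev} used in the proof of Corollary \ref{maincor}. So fix $\lambda ,\mu ,\nu \in \mathfrak{P}_{\leq n-1,k}$ and regard them as elements of $\mathfrak{P}_{\leq n,k}$. Since each has at most $n-1$ rows, none of their Young diagrams contains a column of height $n$, so the column reductions satisfy $\lambda ^{\prime }=\lambda $ and $\mu ^{\prime }=\mu $. Hence it suffices to produce a partition $\widetilde{\nu }\in \mathfrak{P}_{\leq n,k}$ and an integer $d\geq 0$ with
\begin{equation*}
|\lambda |+|\mu |-|\widetilde{\nu }|=Nd\qquad \text{and}\qquad \limfunc{rot}\nolimits^{d}(\widetilde{\nu }^{\prime })=\nu :
\end{equation*}
applying \eqref{curious} to the triple $(\lambda ,\mu ,\widetilde{\nu })$ and degree $d$ then reads $C_{\lambda \mu }^{\widetilde{\nu },d}=\mathcal{N}_{\lambda \mu }^{\limfunc{rot}^{d}(\widetilde{\nu }^{\prime })}=\mathcal{N}_{\lambda \mu }^{\nu }$, and it only remains to recognise $\widetilde{\nu }$ as $\limfunc{Rot}^{-\hat{d}}(\nu )$.

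The candidate is $\widetilde{\nu }:=\limfunc{Rot}^{-\hat{d}}(\nu )=\limfunc{Rot}^{N-\hat{d}}(\nu )$ with $\hat{d}=(|\lambda |+|\mu |-|\nu |)/n$. For the degree I invoke the size identity $|\limfunc{Rot}^{a}(\sigma )|=|\sigma |+N\limfunc{n}_{a}(\sigma )-an$ recorded just above Corollary \ref{lift}; taking $a=N-\hat{d}$ and $\sigma =\nu $ gives $|\widetilde{\nu }|=|\nu |+N\limfunc{n}_{N-\hat{d}}(\nu )-(N-\hat{d})n$, whence
\begin{equation*}
|\lambda |+|\mu |-|\widetilde{\nu }|=\hat{d}n-N\limfunc{n}\nolimits_{N-\hat{d}}(\nu )+Nn-\hat{d}n=N\bigl( n-\limfunc{n}\nolimits_{N-\hat{d}}(\nu )\bigr) ,
\end{equation*}
so $d=n-\limfunc{n}_{N-\hat{d}}(\nu )$; the alternative expression $d=\limfunc{n}_{\hat{d}+1}(\nu ^{\vee })$ of \eqref{dinv} is then an elementary count of the $1$-letters of the word of $\nu ^{\vee }$ against those of $\nu $, and in particular $0\leq d\leq n$ as a degree should be.

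The core of the proof is the identity $\limfunc{rot}^{d}(\widetilde{\nu }^{\prime })=\nu $, and I expect this bookkeeping to be the one real obstacle. I work with $01$-words: $\limfunc{Rot}$ acts as the cyclic shift $w_{1}\cdots w_{N}\mapsto w_{2}\cdots w_{N}w_{1}$; the column reduction $\lambda \mapsto \lambda ^{\prime }$ deletes from a word its initial block $0^{m_{n}}1$, where $m_{n}$ is the number of height-$n$ columns; and $\limfunc{rot}$ on $\mathfrak{P}_{\leq n-1,k}$ is the operation ``append a $1$, then column reduce'' (this is the ``add a $k$-row, delete the $n$-columns'' description of the paragraph following \eqref{rotdef}). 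Inducting on $\hat{d}$, one step $\limfunc{Rot}^{-1}$ is the right shift $w\mapsto w_{N}w_{1}\cdots w_{N-1}$, and the column reduction of $\limfunc{Rot}^{-1}(w)$ either agrees with that of $w$ (when the wrapped letter $w_{N}$ is $0$), in which case $d$ is unchanged, or equals $\limfunc{rot}^{-1}$ of it (when $w_{N}=1$), in which case $d$ increases by one. Summing over the $\hat{d}$ shifts, $d$ counts the wrapped $1$-letters, $d=w_{N}+w_{N-1}+\cdots +w_{N-\hat{d}+1}=n-\limfunc{n}_{N-\hat{d}}(\nu )$ in agreement with the previous step, and $\widetilde{\nu }^{\prime }=\limfunc{rot}^{-d}(\nu )$; this is the same interplay of $\limfunc{Rot}$, column reduction and $\limfunc{rot}$ that produced \eqref{doublereduction}--\eqref{rotten}, run in the opposite direction. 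Together with \eqref{curious} this establishes \eqref{invcurious}.

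The Clifford-algebra formula \eqref{fermifusion} is obtained by feeding \eqref{invcurious} into the expression \eqref{GWvev} for $C_{\lambda \mu }^{\nu ,d}$ from the proof of Corollary \ref{maincor}, with $\nu $ replaced by $\limfunc{Rot}^{-\hat{d}}(\nu )$. Since $\limfunc{Rot}^{-\hat{d}}$ shifts the word by $\hat{d}$ positions, the $1$-letters of $\limfunc{Rot}^{-\hat{d}}(\nu )$ sit at $\ell _{i}(\nu )+\hat{d}$ modulo $N$, so the string of annihilation operators in \eqref{GWvev} turns into $\psi _{\ell _{n}(\nu )+\hat{d}}\cdots \psi _{\ell _{1}(\nu )+\hat{d}}$ with indices read modulo $N$. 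Re-ordering these $n$ anticommuting operators and keeping track of the signs produced by the quasi-periodic boundary conditions \eqref{qpbc2} (recalling also that $(-1)^{d(n-1)}=(-1)^{d(n+1)}$) yields the sign $(-1)^{d(n+1)}$, while the block of creation operators is unaffected up to the innocuous relabelling $t_{i}\leftrightarrow t_{n+1-i}$, which does not change the sum over semistandard tableaux because $K_{\lambda ,\alpha }$ is symmetric in the parts of $\alpha $. This is \eqref{fermifusion}.
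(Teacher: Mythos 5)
Your proof is correct and follows essentially the same route as the paper: both invert the projection formula \eqref{curious} via the dictionary between $\limfunc{Rot}$, column reduction and $\limfunc{rot}$ together with the size formula $|\limfunc{Rot}^{a}(\sigma)|=|\sigma|+N\limfunc{n}_{a}(\sigma)-na$, the only difference being that the paper runs the bookkeeping forward (from $\sigma$ and $d$ to $\nu=\limfunc{rot}^{d}(\sigma')=\limfunc{Rot}^{\ell_{n-d}(\sigma)-1}(\sigma)$, whence $\hat d=\ell_{n-d}(\sigma)-1$), whereas you run it backwards from $\nu$ by induction on the $\hat d$ cyclic shifts of the $01$-word. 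Your verification of $\limfunc{rot}^{d}(\widetilde{\nu}')=\nu$ and of \eqref{dinv} is sound, and your reading of \eqref{fermifusion} off \eqref{GWvev} is consistent with the paper, which likewise states that formula without further detail.
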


\begin{proof}
Let $\sigma \in \mathfrak{P}_{\leq n,k}$. First we observe that the column
reduction $\sigma \rightarrow \sigma ^{\prime }$ can be expressed in terms
of the Dynkin diagram automorphism $\limfunc{Rot}$ of order $N$, $\sigma
^{\prime }=\limfunc{Rot}\nolimits^{\ell _{n}(\sigma )-1}\sigma $. Each
application of the automorphism $\func{rot}$ of order $n$ corresponds to
applying $\limfunc{Rot}$ followed by a column reduction. Hence, for any $%
a=1,...,n-1$ we have%
\begin{equation}
\func{rot}^{a}(\sigma ^{\prime })=\limfunc{Rot}\nolimits^{\ell _{n-a}(\sigma
)-1}(\sigma )
\end{equation}%
Setting $a=d=\frac{|\lambda |+|\mu |-|\sigma |}{N}$ and $\hat{d}:=\ell
_{n-d}(\sigma )-1$ we find that%
\begin{equation}
\nu =\func{rot}^{d}(\sigma ^{\prime })=\limfunc{Rot}\nolimits^{\ell
_{n-d}(\sigma )-1}(\sigma )
\end{equation}%
and, hence,%
\begin{equation*}
C_{\lambda \mu }^{\sigma ,d}=\mathcal{N}_{\lambda\mu}^{\func{%
rot}^{d}(\sigma^{\prime })}=\mathcal{N}_{\lambda\mu}^{%
\nu}=C_{\lambda \mu }^{\limfunc{Rot}^{-\hat{d}}(\nu ),d}\;.
\end{equation*}%
To derive the equality (\ref{dinv}) for the degree $\hat{d}$ observe that%
\begin{eqnarray*}
|\lambda |+|\mu |-|\nu | &=&|\lambda |+|\mu |-|\limfunc{Rot}\nolimits^{\ell
_{n-d}(\sigma )-1}(\sigma )| \\
&=&|\lambda |+|\mu |-|\sigma |-N\limfunc{n}\nolimits_{\hat{d}}(\sigma )+n%
\hat{d} \\
&=&|\lambda |+|\mu |-|\sigma |-Nd+n\hat{d} \\
&=&n\hat{d},
\end{eqnarray*}%
where we have used the generally valid formula $|\limfunc{Rot}%
\nolimits^{a}(\sigma )|=|\sigma |+N\limfunc{n}\nolimits_{a}(\sigma )-na$ and
the trivial observation that $\limfunc{n}\nolimits_{\ell _{n-d}(\sigma
)-1}(\sigma )=d$ which is immediate from the definition of $\limfunc{n}%
\nolimits_{a}(\sigma )$.
\end{proof}

\subsection{Recursion formulae for fusion coefficients} The inductive
algorithm presented in \cite[Section 11]{ckcs} for successively computing product
expansions in the ring $qH^{\ast }(\limfunc{Gr}_{n,N})$ from those in $qH^{\ast }(%
\limfunc{Gr}_{n\mp 1,N})$ can be projected onto the respective fusion
rings $V_{k}(\widehat{\mathfrak{sl}}(n);\mathbb{Z})$ and $V_{k\pm 1}(%
\widehat{\mathfrak{sl}}(n\mp 1);\mathbb{Z})$. In particular, one obtains
recursion formulae expressing $\mathcal{N}_{\lambda \mu }^{\nu }(n,k)$ as a sum of the fusion coefficients  $\mathcal{N}_{\tilde{\lambda}\tilde{\mu}}^{\tilde{\nu},\tilde{d}%
}(n\mp 1,k\pm 1)$.
\begin{corollary}
Given $\lambda ,\mu ,\nu
\in \mathfrak{P}_{\leq n-1,k}$ define $d$, $\hat{d}$ as in Corollary \ref%
{lift} and set $\sigma =\limfunc{Rot}^{-\hat{d}}\nu $. Then one has the
relations
\begin{eqnarray*}
\mathcal{N}_{\lambda \mu }^{\nu }(n,k)&=&\sum_{r=0}^{\lambda^t _{1}}(-1)^{d+r+%
\limfunc{n}_{j-1}(\mu )+\limfunc{n}_{j-r-1}(\sigma )}\sum_{\lambda /\rho
=(1^r)}\mathcal{N}_{\rho ^{\prime }(\psi^\ast _{j}\mu )^{\prime }}^{\limfunc{rot}%
^{d_{r}}(\psi^\ast _{j-r}\sigma )^{\prime }}(n+1,k-1)\;,\\
\mathcal{N}_{\lambda \mu }^{\nu }(n,k)&=&\sum_{r=0}^{\lambda _{1}}(-1)^{d+%
\limfunc{n}_{j-1}(\mu )+\limfunc{n}_{j+r-1}(\sigma )}\sum_{\lambda /\rho
=(r)}\mathcal{N}_{\rho ^{\prime }(\psi _{j}\mu )^{\prime }}^{\limfunc{rot}%
^{d'_{r}}(\psi _{j+r}\sigma )^{\prime }}(n-1,k+1)\;,
\end{eqnarray*}%
where $d_r=d$ if $j<r$ and $d_r=d-1$ else. Similarly, $d'_r=d$ if $j+r\leq N$ and $d'_r=d-1$ else.
\end{corollary}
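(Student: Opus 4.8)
The proof is a composition of three facts established above: the one-particle recursion for Gromov--Witten invariants of \cite[Section~11]{ckcs}, the Lifting Corollary~\ref{lift}, and the projection formula~\eqref{curious}. The plan is: (i)~rewrite the left-hand side $\mathcal N_{\lambda\mu}^{\nu}(n,k)$ as a single Gromov--Witten invariant of $\func{Gr}_{n,N}$; (ii)~expand that invariant by the $\func{Gr}_{n\mp1,N}$ recursion of \cite{ckcs}; (iii)~project each term back to a fusion coefficient of $V_{k\pm1}(\widehat{\mathfrak{sl}}(n\mp1);\mathbb Z)$.

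For step~(ii), recall that \cite[Section~11]{ckcs} relates the quantum cohomology of $\func{Gr}_{n,N}$ to that of $\func{Gr}_{n\mp1,N}$ by peeling one fermion off the $01$-word of $\mu$ and commuting the noncommutative Schur operator $\boldsymbol s_{\lambda}$ past it using~\eqref{Schurfermicomm} (for $\func{Gr}_{n-1,N}$), respectively its annihilation-operator analogue from the Remark after the proof of Theorem~\ref{main} (for $\func{Gr}_{n+1,N}$). For partitions $\lambda,\mu,\sigma\in\mathfrak P_{\leq n,k}$ and a suitable site $j$ this yields
\[
C_{\lambda\mu}^{\sigma,d}(n,k)=\sum_{r=0}^{\lambda_{1}}\sum_{\lambda/\rho=(r)}\varepsilon_{r}\,C_{\rho,\,\psi_{j}\mu}^{\,\psi_{j+r}\sigma,\,d'_{r}}(n-1,k+1),\qquad d'_{r}=\begin{cases}d,&j+r\leq N,\\ d-1,&j+r>N,\end{cases}
\]
and the analogous identity over vertical strips $\lambda/\rho=(1^{r})$, with shift $j\mapsto j-r$, creation operators $\psi^{\ast}$ in place of $\psi$, the case distinction $j<r$ versus $j\geq r$, and an extra factor $(-1)^{r}$, for $\func{Gr}_{n+1,N}$. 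The sign $\varepsilon_{r}$ is assembled from the Clifford-algebra signs $(-1)^{\limfunc{n}_{j-1}(\mu)}$ and $(-1)^{\limfunc{n}_{j+r-1}(\sigma)}$ of $\psi_{j}$ acting on $w(\mu)$ and $\psi_{j+r}$ acting on $w(\sigma)$, the replacement $q\mapsto-q$ in $\bar{\boldsymbol s}_{\rho}$, and the wrap-around sign of~\eqref{qpbc2} when $j+r>N$; here $\psi_{j}\mu,\psi_{j+r}\sigma\in\mathfrak P_{\leq n-1,k+1}$ are the partitions obtained by applying those operators to the $01$-words of $\mu$ and $\sigma$.

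For step~(i) I invoke Corollary~\ref{lift}: for $\lambda,\mu,\nu\in\mathfrak P_{\leq n-1,k}$, identity~\eqref{invcurious} gives $\mathcal N_{\lambda\mu}^{\nu}(n,k)=C_{\lambda\mu}^{\sigma,d}(n,k)$ with $\sigma:=\limfunc{Rot}^{-\hat d}(\nu)\in\mathfrak P_{\leq n,k}$ and $\hat d,d$ as in~\eqref{dinv}; this is exactly why the recursion is run for the rotated partition $\sigma$ rather than for $\nu$ (which need not have $n$ rows or $n$-columns). For step~(iii) I apply the projection formula~\eqref{curious} of Proposition~\ref{mainprop} termwise, with $(n,k)$ replaced by $(n-1,k+1)$: each $C_{\rho,\,\psi_{j}\mu}^{\,\psi_{j+r}\sigma,\,d'_{r}}(n-1,k+1)$ becomes $\mathcal N_{\rho'\,(\psi_{j}\mu)'}^{\,\limfunc{rot}^{d'_{r}}((\psi_{j+r}\sigma)')}(n-1,k+1)$, where $'$ is the $(n-1)$-column reduction and $\limfunc{rot}$ the $\widehat{\mathfrak{sl}}(n-1)$ Dynkin diagram automorphism. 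Folding the $d'_{r}$-dependent part of $\varepsilon_{r}$ together with the wrap-around sign into one overall $(-1)^{d}$ leaves precisely the prefactor $(-1)^{d+\limfunc{n}_{j-1}(\mu)+\limfunc{n}_{j+r-1}(\sigma)}$ of the second stated identity; the first follows identically from the $\func{Gr}_{n+1,N}$ recursion, the $(n+1)$-column reduction, and the $\widehat{\mathfrak{sl}}(n+1)$ automorphism, with the surviving $(-1)^{r}$ coming from the vertical strips.

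The only genuine work is the sign-and-degree bookkeeping, which is concentrated in the wrap-around of the quasi-periodic boundary conditions~\eqref{qpbc2}. One has to verify that the degree $d'_{r}$ attached to each term by the $\func{Gr}_{n\mp1,N}$ recursion coincides with the power of $\limfunc{rot}$ that~\eqref{curious} then requires --- for which the translation $\limfunc{rot}^{a}(\sigma')=\limfunc{Rot}^{\ell_{n-a}(\sigma)-1}(\sigma)$ from the proof of Corollary~\ref{lift} is the key --- and that the column reductions commute past the shifted creation and annihilation operators, so that $(\psi_{j}\mu)'$ and $(\psi_{j+r}\sigma)'$ (and their vertical-strip counterparts) are indeed the partitions that appear after projection. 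Granting these checks, the corollary is just the chain $\mathcal N_{\lambda\mu}^{\nu}(n,k)\to C_{\lambda\mu}^{\sigma,d}(n,k)\to\sum C\to\sum\mathcal N$.
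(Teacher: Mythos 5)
Your proposal is correct and follows exactly the paper's route: rewrite $\mathcal{N}_{\lambda\mu}^{\nu}(n,k)$ as a Gromov--Witten invariant via \eqref{invcurious}, apply the $\func{Gr}_{n\mp 1,N}$ recursion of \cite[Corollary 11.7]{ckcs}, and project each term back with \eqref{curious}. The paper's own proof is just this three-step citation chain; your additional reconstruction of the recursion's sign and degree bookkeeping is consistent with the stated formulas and goes beyond what the paper spells out.
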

\begin{proof}
Exploiting (\ref{invcurious}) we rewrite the fusion coefficient as Gromov-Witten invariant. Employing the recursion relations for Gromov-Witten invariants \cite[Corollary 11.7]{ckcs} and subsequent application of (\ref{curious}) proves the assertion.
\end{proof}
Successive application of the second recursion formula allows one to reduce the computation of $\widehat{\mathfrak{sl}}(n)_{k}$-fusion coefficients to the computation
of $\widehat{\mathfrak{sl}}(2)_{k+n-2}$-fusion coefficients which are
explicitly known (see e.g. \cite{CFTbook}),%
\begin{equation}
n=2:\;\mathcal{N}_{(\lambda) (\mu) }^{(\nu) }(2,k)=\left\{
\begin{array}{cc}
1, & |\lambda -\mu |\leq \nu \leq \min \{\lambda +\mu ,2k-\lambda -\mu \} \\
0, & \text{else}%
\end{array}%
\right. \;.
\end{equation}

\begin{example}{\rm
We consider the previous example with $n=3,~k=4,~\lambda =(3,1),~\mu =(3,2)$
and $\nu =(2,1)$. Since $w(\mu )=1001010$ only the diagrams $\psi _{j}\mu $
with $j=1,4,6$ are nonvanishing. Let us choose $j=4$ with $\psi _{4}\mu =-(4)
$. Then converting $\nu $ into a 01-word, $w(\nu )=1010100$, we find that acting with $\psi _{j+r}$ on $\limfunc{Rot}^{-%
\hat{d}}\nu =(4,3,2)$ yields a only a nonzero result for the values $r=1,3$. One then calculates,%
\begin{multline*}
\mathcal{N}_{\lambda \mu }^{\nu }(3,4)=C_{\lambda \mu }^{(4,3,2)}(3,4) \\
=(-1)^{d+\limfunc{n}_{3}(\mu )+\limfunc{n}_{4}(4,3,2)}\{C_{(2,1),\psi
_{4}(\mu )}^{\psi _{5}(4,3,2)}(2,5)+C_{(3,0),\psi _{4}(\mu )}^{\psi
_{5}(4,3,2)}(2,5)\}\\+(-1)^{d+\limfunc{n}_{3}(\mu )+\limfunc{n}%
_{6}(4,3,2)}C_{(1,0),\psi _{4}(\mu )}^{\psi _{7}(4,3,2)}(2,5) \\
=C_{(2,1),(4,0)}^{(5,2)}(2,5)+C_{(3,0),(4,0)}^{(5,2)}(2,5)-C_{(1,0),(4,0)}^{(3,2)}(2,5)
\\
=\mathcal{N}_{(1)(4)}^{(3)}(2,5)+\mathcal{N}_{(3)(4)}^{(3)}(2,5)-\mathcal{N}%
_{(1)(4)}^{(1)}(2,5)=1+1-0=2,
\end{multline*}%
which is in accordance with our previous result from Example \ref{rimhookex2}
exploiting (\ref{invcurious}).
}
\end{example}

\subsection{Racah-Speiser algorithms for fusion coefficients} We are now projecting the quantum Racah-Speiser algorithm from the quantum cohomology ring onto the fusion ring and then compare with various other algorithms for computing fusion coefficients.
\begin{corollary}
Given $\lambda ,\mu ,\nu
\in \mathfrak{P}_{\leq n-1,k}$ define $d$, $\hat{d}$ as in Corollary \ref%
{lift}. For a given
permutation $\pi \in S_{n}$ introduce the weight vector
\begin{equation*}
\hat{\alpha}_{i}(\pi )=(\ell _{i}(\nu )-\ell _{\pi (i)}(\mu )+\hat{d})\func{%
mod}N\geq 0
\end{equation*}%
and
\begin{equation*}
d(\pi )=\#\{i~|~(\ell _{i}(\nu )+\hat{d})\func{mod}N<\ell _{\pi (i)}(\mu
)\}\;.
\end{equation*}%
Then one has the following combinatorial expression for the fusion
coefficients,%
\begin{equation}
\mathcal{N}_{\lambda \mu }^{\nu }=\sum_{\substack{ \pi \in S_{n}  \\ d(\pi
)=d }}(-1)^{\ell (\pi )+(n+1)d}K_{\lambda ,\hat{\alpha}(\pi )}\;.
\label{fusionKostka}
\end{equation}
\end{corollary}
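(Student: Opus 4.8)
The plan is to derive \eqref{fusionKostka} by expanding the Clifford-algebra expression \eqref{fermifusion} for $\mathcal{N}_{\lambda\mu}^{\nu}$ supplied by Corollary \ref{lift}, and then organizing the resulting sum exactly as in the proof of Corollary \ref{maincor}. Expanding $\boldsymbol{s}_{\lambda}$ over semi-standard tableaux, \eqref{fermifusion} writes $\mathcal{N}_{\lambda\mu}^{\nu}=\sum_{T}(-1)^{d(n+1)}$ times a vacuum expectation value of $n$ annihilation operators with indices $\ell_{i}(\nu)+\hat d$ against $n$ creation operators with indices $\ell_{i}(\mu)+t_{i}$, all read modulo $N$, where $t_{i}$ counts the entries equal to $i$ in $T$ and $d,\hat d$ are the fixed integers of Corollary \ref{lift}. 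Since $(\psi^{\ast}_{j})^{2}=0$ and the fermion operators anticommute, such a matrix element equals $\pm1$ when the multiset of creation positions $\{\ell_{i}(\mu)+t_{i}\func{mod}N\}$ coincides, without repetitions, with $\{\ell_{j}(\nu)+\hat d\func{mod}N\}$, and vanishes otherwise.

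When the contribution is non-zero there is a unique $\pi\in S_{n}$ with $\ell_{\pi(j)}(\mu)+t_{\pi(j)}\equiv\ell_{j}(\nu)+\hat d\ (\func{mod}\,N)$; then $t_{\pi(j)}$ is the representative in $[0,N)$ of $\ell_{j}(\nu)-\ell_{\pi(j)}(\mu)+\hat d$, so the weight of $T$ is a permutation of $\hat\alpha(\pi)$, while the creation indices that exceed $N$ — i.e. the wraps governed by the quasi-periodic boundary conditions \eqref{qpbc2} — are exactly those $j$ with $(\ell_{j}(\nu)+\hat d)\func{mod}N<\ell_{\pi(j)}(\mu)$, so a non-zero matrix element forces $d(\pi)=d$. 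For a fixed admissible $\pi$ there are, by definition of the Kostka number, $K_{\lambda,\hat\alpha(\pi)}$ tableaux $T$ producing it, all with the same matrix element; reordering the creation operators to match the order of the annihilation operators contributes the sign $(-1)^{\ell(\pi)}$, which together with the prefactor $(-1)^{d(n+1)}$ gives the summand $(-1)^{\ell(\pi)+(n+1)d}K_{\lambda,\hat\alpha(\pi)}$. Summing over $T$ — equivalently over the admissible $\pi$ — yields \eqref{fusionKostka}. The compatibility of the degree constraints (in particular that $|\hat\alpha(\pi)|=|\lambda|$ for the admissible $\pi$) follows from $|\lambda|+|\mu|-|\nu|=n\hat d$ together with $Nd=|\lambda|+|\mu|-|\func{Rot}^{-\hat d}(\nu)|$ and the identity $|\func{Rot}^{a}(\sigma)|=|\sigma|+N\func{n}_{a}(\sigma)-na$ used already in the proof of Corollary \ref{lift}.

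The only genuinely delicate point is the sign, but along the route above it is inherited from \eqref{fermifusion}, whose prefactor $(-1)^{d(n+1)}$ is already established in Corollary \ref{lift}. A parallel route is to start from \eqref{invcurious}, $\mathcal{N}_{\lambda\mu}^{\nu}=C_{\lambda\mu}^{\func{Rot}^{-\hat d}(\nu),d}$, and feed it into the quantum Racah-Speiser formula \eqref{GWKostka}: because $\func{Rot}$ shifts the $01$-word cyclically, the positions of $\sigma:=\func{Rot}^{-\hat d}(\nu)$ satisfy $\{\ell_{i}(\sigma)\}=\{(\ell_{j}(\nu)+\hat d)\func{mod}N\}$, and after renaming the summation variable $\pi\in S_{n}$ to absorb the cyclic relabelling of particle indices produced by the $d=n-\func{n}_{N-\hat d}(\nu)$ particles that wrap around, $\alpha(\pi)$ turns into $\hat\alpha(\pi)$ and the constraint into $d(\pi)=d$; here one uses that $(n-1)d$ and $(n+1)d$ have the same parity, so it remains only to verify that the parity of the relabelling permutation is compensated by the corresponding change in $\ell(\pi)$. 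I expect this sign bookkeeping to be the main obstacle, and the first route — building on the already-established \eqref{fermifusion} — to be the cleanest way around it.
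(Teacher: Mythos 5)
Your proposal is correct and follows essentially the same route as the paper, whose entire proof reads ``The assertion immediately follows from \eqref{invcurious} and \eqref{GWKostka}'' --- i.e.\ exactly your second route, while your first route merely unwinds \eqref{fermifusion} (itself part of Corollary \ref{lift}) the same way the paper unwinds \eqref{GWvev} in the proof of Corollary \ref{maincor}. You are in fact more explicit than the paper about the one delicate point, namely that the cyclic relabelling of the particle positions of $\limfunc{Rot}^{-\hat{d}}(\nu)$ must be absorbed into $\pi$ using the permutation-invariance of $K_{\lambda,\alpha}$ and that its parity is accounted for in passing from $(-1)^{(n-1)d}$ to $(-1)^{(n+1)d}$; the paper silently elides this.
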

\begin{proof}
The assertion immediately follows from \eqref{invcurious} and \eqref{GWKostka}.
\end{proof}
\subsection{Comparison with the Kac-Walton formula}
The Kac-Walton formula \cite{Kac}, \cite{Walton}, \cite{GoodmanWenzl} can be stated as follows: denote by $\hat{W}$ the affine Weyl group, then%
\begin{equation}
\mathcal{N}_{\lambda \mu }^{\nu }=\sum_{\substack{ w\in \hat{W}  \\ w\cdot
\hat{\nu}\in P_{k}^{+}}}(-1)^{\ell (w)}c_{\lambda \mu }^{\sigma (w\cdot \hat{%
\nu})}~,  \label{KWformula}
\end{equation}%
where $w\cdot \hat{\nu}=w(\hat{\nu}+\hat{\rho})-\hat{\rho}$ is the shifted
Weyl group action with $\hat{\rho}=\sum_i\hat\omega_i$ being the affine Weyl vector and $\sigma
(w\cdot \hat{\nu})$ is the partition obtained by adding $m_{0}(\hat{\nu})$
(the zeroth Dynkin label) $n$-columns to the Young diagram of the image of $%
w\cdot \hat{\nu}$ under the bijection (\ref{weight2part}).

\begin{example}
{\rm Setting once more $n=3$ and $k=4$ consider the affine weights $\hat{%
\lambda}=\hat{\omega}_{0}+2\hat{\omega}_{1}+\hat{\omega}_{2}$, $\hat{\mu}=%
\hat{\omega}_{0}+\hat{\omega}_{1}+2\hat{\omega}_{2}$ in $P_{k}^{+}$. The
corresponding partitions under \eqref{weight2part} are $\lambda =(3,1)$ and $\mu =(3,2)$. We already
stated the result of the Littlewood-Richardson rule in Example \ref%
{rimhookex2}, see (\ref{LRRex}). Removing all $n=3$-columns from the
partitions $\nu $ we obtain the $\mathfrak{sl}(n)$ tensor product
decomposition%
\begin{equation*}
\lambda \otimes \mu =(6,3)\oplus (5,1)\oplus (5,4)\oplus 2(4,2)\oplus
(3,0)\oplus (3,3)\oplus 2(2,1)\oplus (0,0)\;.
\end{equation*}%
Here we have identified by abuse of notation highest weight modules with the
corresponding partitions. We wish to consider the fusion coefficient of the
affine weight $\hat{\nu}=2\hat{\omega}_{1}+2\hat{\omega}_{2}$ with partition
$\nu =(4,2)$. From (\ref{KWformula}) we then find%
\begin{equation*}
\mathcal{N}_{\lambda \mu }^{\nu }=c_{\lambda \mu }^{(4,2)}-c_{\lambda \mu
}^{(6,3)}=2-1=1,
\end{equation*}%
since $s_{0}\cdot \widehat{(6,3)}=s_{0}\cdot (-2\hat{\omega}_{0}+3\hat{\omega%
}_{1}+3\hat{\omega}_{2})=\hat{\nu}$ with $s_{0}$ denoting the affine Weyl
reflection. In fact, the entire fusion product expansion is computed to%
\begin{equation}\label{fusionex3}
\Yvcentermath1\yng(3,1)~\ast ~\yng(3,2)= \\
\Yvcentermath1\yng(4,2)+~\yng(3)+~\yng(3,3)+2~\yng(2,1)+\emptyset \;.
\end{equation}%
In order to compare this result with the formulae (\ref{GWKostka}) we
compare with the corresponding product in the quantum cohomology ring of
Example \ref{rimhookex2}. From which we read off %
\begin{equation*}
\mathcal{N}_{\lambda \mu }^{\nu }=C_{\lambda \mu }^{\limfunc{Rot}^{-1}\nu
,1}=C_{\lambda \mu }^{(2,0,0),1}=K_{\lambda ,(2,1,1)}-K_{\lambda
,(3,1,0)}=c_{\lambda (2,1)}^{(4,2,1)}-c_{\lambda (1,0,0)}^{(4,1,0)}=2-1=1\;.
\end{equation*}%
The rim-hook algorithm $\mathrm{(\ref{GWrimhook})}$ yields%
\begin{equation*}
\mathcal{N}_{\lambda \mu }^{\nu }=C_{\lambda \mu }^{\limfunc{Rot}^{-1}(\nu
),1}=c_{(3,1,0,0),(3,2,0,0)}^{(4,3,1,1)}=1\;.
\end{equation*}
}
\end{example}

\begin{remark}{\rm Note that as pointed out in the introduction the projected `quantum Racah-Speiser algorithm' does not use the affine Weyl group but only the symmetric group. Moreover, the summands in the expression for the fusion coefficients obtained via the
reduction formula (\ref{curious}) from either (\ref{GWKostka}) or (\ref%
{GWrimhook}) in general differ from the Kac-Walton formula (\ref{KWformula}%
), thus, leading to non-trivial identities between Littlewood-Richardson
coefficients.
}
\end{remark}

Both isomorphisms \eqref{VQiso} and \eqref{VQisodual} of Theorem \ref{quotient} give rise to algorithms for the computation of fusion coefficients. The first isomorphism \eqref{VQiso} leads simply to the projection of the dual rim hook algorithm for the quantum cohomology ring discussed in Section \ref{subsec:dualrimhook} and Example \ref{rimhookex2}.
\subsection{Projection of the dual rim hook algorithm}
\begin{enumerate}
\item Compute via the Littlewood-Richardson algorithm the expansion $%
s_{\lambda }s_{\mu }=\sum_{\rho }c_{\lambda \mu }^{\rho }s_{\rho }$. Discard
all terms for which the partition $\rho $ has length \TEXTsymbol{>} $n$.

\item Replace each Schur polynomial $s_{\rho }$ with $s_{\rho ^{\prime }}$,
i.e. remove all $n$-columns in the Young diagram associated with $\rho $.
(This is allowed since $e_{n}=1$ according to \eqref{VQiso}.)

\item Among the set of remaining Schur polynomials make for each $\rho
^{\prime }$ with $\rho _{1}^{\prime }>k$ the replacement $s_{\rho ^{\prime
}}=(-1)^{d(n-1)}h_{k}^{d}s_{v(\rho ^{\prime })}$, where $v(\rho ^{\prime })$
is the integer vector defined in \eqref{GWreduce} and $|\rho ^{\prime }|-|v(\rho ^{\prime
})|=Nd$. Use the straightening rules \eqref{Schurstraight} for Schur polynomials to express $%
s_{v(\rho ^{\prime })}$ in terms of a Schur polynomial $s_{\sigma }$ with $%
\sigma $ being a \emph{partition}. Then collect terms noting that $%
h_{k}^{d}s_{\sigma }=s_{\func{rot}^{d}(\sigma )}$ according to the
Pieri-rule for Schur polynomials. The resulting coefficients are the fusion coefficients $\mathcal{N}_{\lambda \mu }^{\nu }$ with $\nu =\func{rot}^{d}(\sigma )$.
\end{enumerate}
Since the isomorphism \eqref{VQiso} has not been explicitly
stated previously, we briefly outline its derivation which closely
parallels the one for the second isomorphism \eqref{VQisodual} which is described in detail in \cite[Proof of Theorem 6.20]{ckcs}. The above algorithm then follows along the same lines as in the case of the dual rim hook algorithm discussed previously.
\begin{proof}[Proof of the isomorphism \eqref{VQiso} and derivation of the algorithm]
Consider the complexification $\Lambda_\mathbb{C}^{(n)}=\mathbb{C}\otimes_\mathbb{Z}\Lambda^{(n)}$ of $\Lambda^{(n)}=\mathbb{Z}[e_1,\ldots,e_n]$. Let $\mathcal{J}$ be the ideal in \eqref{VQiso}. Along the same lines as in \cite{ckcs} one shows that the ideal $\mathcal{J}$ is radical. Again let $\mathbb{V}(\mathcal{J})$ be the set of $n$-tuples $y=(y_1,\ldots,y_n)$ for which $f\in \mathcal{J}$ vanishes. The latter is identical with the solutions of the following system of equations%
\begin{equation}\label{dualBAE}
y_{1}^{n+k}=\cdots =y_{n}^{n+k}=(-1)^{n-1}h_{k}(y_{1},\ldots ,y_{n})\;.
\end{equation}%
Using the same arguments as in \cite[Theorem 6.4]{ckcs} one shows that there exists a bijection between the set of partitions $\mathfrak{P}_{\leq n-1,k}$ and the set of solutions (up to permutation of the $y_i$'s),
\begin{equation}\label{dualBAEsol}
\mathfrak{P}_{\leq n-1,k}\ni\sigma\mapsto y_\sigma=\zeta^{\frac{|\sigma|}{n}}(\zeta^{I_1(\sigma)},\ldots,\zeta^{I_n(\sigma)}),
\end{equation}
where $\zeta=\exp(\frac{2\pi i}{k+n})$ and $I=I(\sigma)$ is the tuple of half-integers defined previously in \eqref{Imap}. According to Hilbert's Nullstellensatz it then follows that two elements $f,g\in\Lambda_\mathbb{C}^{(n)}/\mathcal{J}$ are identical if and only if they coincide on the solution set \eqref{dualBAEsol}. This result together with Lemma \ref{technical} implies the desired isomorphism, since one can now send $\hat\lambda\in P^+_k$ to $s_\lambda(y_\sigma)=\zeta^{|\sigma||\lambda|}s_{\lambda^t}(\zeta^{-I(\sigma^t)})$ and then apply the second isomorphism \eqref{VQisodual}; see \cite[Proof of Theorem 6.20]{ckcs}.

In order to derive the algorithm we now exploit \eqref{dualBAE} and proceed analogously to the derivation of the dual rim hook algorithm detailed in Section \ref{subsec:dualrimhook}.
\end{proof}
\begin{remark}{\rm
As we have not made use of \eqref{curious} in the derivation of the algorithm, the above line of argument provides an alternative proof of Proposition \ref{mainprop}.}
\end{remark}
\subsection{A dual Racah-Speiser algorithm for the fusion ring}
The second isomorphism \eqref{VQisodual} in Theorem \ref{quotient} provides a dual algorithm in terms of the transposed partitions:

\begin{enumerate}
\item Compute via the Littlewood-Richardson algorithm the expansion $%
s_{\lambda ^{t}}s_{\mu ^{t}}=\sum_{\rho ^{t}}c_{\lambda \mu }^{\rho }s_{\rho
^{t}}$; note that $c_{\lambda \mu }^{\rho }=c_{\lambda ^{t}\mu ^{t}}^{\rho
^{t}}$. Discard all terms for which the partition $\rho ^{t}$ has length
\TEXTsymbol{>} $k$.

\item For each of the remaining terms with $\rho _{1}^{t}>n$ make the
replacement $s_{\rho ^{t}}=(-1)^{d(k-1)}e_k^d s_{v(\rho^t)}$ with $dN=|\rho|-|v(\rho^t)|$ and
\begin{equation*}
v_i(\rho^t)=\rho^t_i,\qquad i-k\leq v_i(\rho^t)<i+n\,.
\end{equation*}
Then use the straightening rules \eqref{Schurstraight} for Schur
polynomials to rewrite $s_{v(\rho^t)}$ as $s_{\sigma^{t}}$ with $\sigma^{t}$ a partition. Finally, observe that $e_k^d s_{\sigma^t}=s_{\nu^t}$, where $\nu^t$ is obtained by adding $d$ $k$-columns to $\sigma^t$.
\item Remove all rows of length $n$ in $\nu^{t}$ to obtain $(\nu^t)''=(\nu')^t$; this is allowed because $h_n=1$. Collecting terms one
obtains the fusion coefficient $\mathcal{N}_{\lambda \mu }^{\nu' }$.
\end{enumerate}
\begin{proof}[Derivation of the algorithm]
The proof is completely analogous to the one considered above, hence we omit the details. The only important information needed is that the zero set of the ideal in \eqref{VQisodual} is given by the solutions to the Bethe Ansatz equations of the phase model (see \cite[Proposition 6.1 and Lemma 6.3]{ckcs}),
\begin{equation}\label{phaseBAE}
x_1^{n+k}=\cdots =x_k^{n+k}=(-1)^{k-1}e_k(x_1,\ldots,x_k)\,.
\end{equation}
Using the same presentation of the Schur polynomial as in \eqref{Schurpolyrep} one successively replaces powers $>n$ in the variable $x_i$ via the Bethe Ansatz equations \eqref{phaseBAE} to arrive at the above replacement rule.
\end{proof}
\begin{example}
Exploiting that $c_{\lambda \mu }^{\rho }=c_{\lambda ^{t}\mu ^{t}}^{\rho
^{t}}$ we consider once more the Littlewood-Richardson expansion from
Example \ref{rimhookex2} with $n=3,\;k=4,\;\lambda ^{t}=(2,1,1,0)$ and $\mu
^{t}=(2,2,1,0)$. After taking the transpose partitions we discard $\rho
^{t}=(2,2,2,1,1,1)$, $(3,2,1,1,1,1)$, $(2,2,2,2,1)$, $(3,2,2,1,1)$, $(3,3,1,1,1)$ and $(4,2,1,1,1)$ from (\ref{LRRex}). We are left with three partitions $\rho ^{t}
$ for which $\rho _{1}^{t}>n$, namely $(4,2,2,1)$, $(4,3,1,1)$, $(4,3,2,0)$.
Employing the above algorithm we calculate%
\begin{equation*}
s_{(4,2,2,1)}=s_{(2,2,1,1)},\;s_{(4,3,1,1)}=s_{(3,1,1,1)},%
\;s_{(4,3,2,0)}=s_{(3,2,0,1)}=0\;.
\end{equation*}%
Removing all rows of length $n=3$ and collecting terms one finds after taking the transpose partitions the previous expansion \eqref{fusionex3}.%
\begin{equation*}
\end{equation*}
\end{example}


\end{document}